\thanks{\email jean-baptiste.durand@imag.fr,
       Laboratoire Jean Kuntzmann and INRIA, Mistis,
       51 rue des Math\'{e}matiques
       B.P. 53, F-38041 Grenoble cedex 9, France}%
\thanks{\email guedon@cirad.fr,
       CIRAD, UMR AGAP and INRIA, Virtual Plants 
       F-34398 Montpellier, France}%
\newcommand{\email}[1]{\href{mailto:#1}{\nolinkurl{#1}}}
\newcommand{\BlackBox}{\rule{1.5ex}{1.5ex}}  % end of proof
\newenvironment{proof}{\par\noindent{\bf Proof\ }}{\hfill\BlackBox\\[2mm]}
\newcommand{\bss}{\boldsymbol s}
\newcommand{\BSS}{\boldsymbol S}
\newcommand{\bsx}{\boldsymbol x}
\newcommand{\BSX}{\boldsymbol X}
\newcommand{\bsy}{\boldsymbol y}
\newcommand{\BSY}{\boldsymbol Y}
\newcommand{\bbss}{\boldsymbol{\bar{s}}}
\newcommand{\BBSS}{\boldsymbol{\bar{S}}}
\newcommand{\bbsx}{\boldsymbol{\bar{x}}}
\newcommand{\BBSX}{\boldsymbol{\bar{X}}}
\newcommand{\RR}{\ensuremath{\mathbb{R}}}
\newcommand{\dps}{\displaystyle}
\newcommand{\MG}{\mathcal{G}}
\newcommand{\MH}{\mathcal{H}}
\newcommand{\MM}{\mathcal{M}}
\newcommand{\MO}{\mathcal{O}}
\newcommand{\MP}{\mathcal{P}}
\newcommand{\MU}{\mathcal{U}}
\newcommand{\Tree}{\mathcal{T}}
\newcommand{\MV}{\mathcal{V}}
\newcommand{\pa}{\mbox{pa}}
\newcommand{\spa}{\mbox{\footnotesize{pa}}}
\newtheorem{prop}{Proposition}
\newtheorem{cor}{Corollary}
\begin{document}
\makeRR   % cas d'un rapport de recherche
%% \makeRT % cas d'un rapport technique.
%% a partir d'ici, chacun fait comme il le souhaite

\section{Introduction}
\label{sec:introduction}
Hidden Markov chain models have been widely used in signal processing
and pattern recognition, for the analysis of sequences with various
types of underlying structures -- for example succession of homogeneous
zones, or noisy patterns (Ephraim \& Mehrav,
2002\nocite{ephraim2002}; Zucchini \& MacDonald,
2009\nocite{zucchini2009}). This family of models was extended to other
kinds of structured data, and particularly to tree graphs
(Crouse {\it{et al.}}, 1998\nocite{crouse1998}). 
Concerning statistical inference for hidden Markov models, we distinguish
inference for the unobserved state process from inference for model parameters
(Capp\'{e} \textit{et al.}, 2005\nocite{cappe2005}).\ Our focus here
is state inference and more precisely the uncertainty in state sequences.\ 

State inference is particularly relevant in numerous applications where
the unobserved states have a meaningful interpretation. In such cases,
the state sequence has to be restored. The restored states may be used,
typically, in prediction, in segmentation or in denoising. For example
Ciriza {\it et al.} (2011\nocite{ciriza2011}) proposed to optimize the
consumption of printers by prediction of the future printing rate from
the sequence of printing requests. This rate is related to the
parameters of a hidden Markov chain model, and an optimal timeout (time before
entering sleep mode) is derived from the restored states. Le Cadre \& Tremois
(1998\nocite{lecadre1998}) used a vector of restored states in a
dynamical system for source tracking in sonar and radar systems.
Such use of the state sequence makes assessment
of the state uncertainty particularly important. 

Not only is state restoration essential for model interpretation,
it is generally also used % in classical approaches 
for model diagnostic and validation, for example based on the visualization 
of functions of the states. The use of restored states in the 
above-mentioned contexts raises the issue of quantifying the state
sequence uncertainty for a given observed sequence, once a hidden Markov
model has been estimated. Global quantification of this uncertainty is
not sufficient for a precise diagnosis: it is also very important to locate this
uncertainty along the sequence, for instance to differentiate zones that
are non-ambiguously explained from zones that are ambiguously explained
by the estimated model. We have introduced the statistical problem of quantifying state
uncertainty in the case of hidden Markov models with discrete state
space for sequences, but the same reasoning applies to other families of
latent structure models, including hidden semi-Markov models and hidden
Markov tree models.

Methods for exploring the state sequences that explain a given observed
sequence for a known hidden Markov chain model may be divided into three
categories: 
(i) enumeration of state sequences, (ii) state profiles, which are state sequences
summarized in a $J\times T$ array where $J$ is the number of states and $T$
the length of the sequence, (iii) computation of a global measure of state
sequence uncertainty. The entropy of the state sequence that explains an
observed sequence for a known hidden Markov chain model was proposed as a global
measure of the state sequence uncertainty by Hernando \textit{et al}.\
(2005\nocite{hernando2005}). We assume here that this conditional
entropy is the canonical measure of state sequence uncertainty. Various
methods belonging to these three categories have been developed for
different families of hidden Markovian models, including hidden Markov
chain and hidden semi-Markov chain models; see Gu\'{e}don
(2007\nocite{guedon2007}) and references therein. We identified some  
shortcomings of the proposed methods: 
\begin{itemize}
\item The entropy of the state sequence is not a direct summary of the state
profiles based on the smoothed probabilities, due to the marginalization that
is intrinsic in the computation of smoothed probabilities. We show that
the uncertainty reflected in the classic multivariate state profiles computed
by the smoothing algorithm can be summarized as an univariate profile of
marginal entropies. Each successive marginal entropy quantifies the
uncertainty in the corresponding posterior state distribution for a given time
$t$. The entropy of the state sequence, in contrast, can be decomposed
along the sequence as a profile of conditional entropies where the
conditioning refers to the preceding states. Using %standard 
results from information theory, we show that the profile of conditional
entropies is pointwise upper-bounded by the profile of marginal
entropies. Hence, the classic state profiles tend to 
over-represent the state sequence uncertainty and should be interpreted with caution.

\item Due to their multidimensional nature, state profiles are difficult to
visualize and interpret on trees except in the case of two-state models.
\end{itemize}

Our objective is to propose efficient algorithms for computing
univariate profiles of conditional entropies. These profiles correspond 
to an additive decomposition of the entropy of the state process along
the sequence. As a consequence, each term of the decomposition can be
interpreted as a local contribution to entropy. This principle can be
extended to more general supporting structures: directed acyclic graphs
(DAGs), and in particular, trees. Each contribution is shown to be the
conditional entropy  of the state at each location, given the past or
the future of the state process. This decomposition allows canonical
uncertainty to be localized within the structure, which makes the
connection between global and local uncertainty easily apprehensible,
even for hidden Markov tree models. In this case, we propose to compute
in a first stage an univariate profile of conditional entropies that
summarizes state uncertainty for each vertex. In a second 
stage, the usual state profiles computed by the upward-downward algorithm
(Durand \textit{et al}., 2004\nocite{durand2004}), or an adaption to
trees of the forward-backward Viterbi algorithm of Brushe {\it{et al.}}
(1998\nocite{brushe1998}), are visualized on selected paths of interest  
within the tree. This allows for identification of alternative states at
positions with ambiguous state value. 
%Similar profiles were used by
%Gu\'edon (2007\nocite{guedon2007}) as diagnostic tools for localization
%of state uncertainty in the context of hidden (semi-)Markov chain models.

The remainder of this paper is structured as follows. 
Section \ref{sec:profiles_hmc} focuses on algorithms to compute
entropy profiles for state sequences in hidden Markov chain models. These
algorithms are based on conditioning on either the past or the future of
the process.
In Section \ref{sec:profiles_hmt}, an additive decomposition
of the global state entropy is derived for graphical hidden Markov
models indexed by DAGs.  Then algorithms to compute entropy profiles
conditioned on the parent states and conditioned on the children states
are derived in detail in the case of hidden Markov tree models. 
% The use of entropy profiles is illustrated in Section
% \ref{sec:application} through applications to the earthquake data, and
% to the analysis of the structure of Allepo pines. 
The use of entropy profiles is illustrated in Section
\ref{sec:application} through applications to sequence and tree data.
Section \ref{sec:conclusion} consists of
concluding remarks.  

% \bigskip

\section{Entropy profiles for hidden Markov chain models}
\label{sec:profiles_hmc}

In this section, definitions and notations related to hidden Markov
chain (HMC) models are introduced. These are followed by reminders on
the classic forward-backward algorithm and the algorithm of Hernando
\textit{et al}. (2005\nocite{hernando2005}) to compute the 
entropy of the state sequence. These algorithms form the basis of the proposed
methodology to compute the state sequence entropy, as the sum of local
conditional entropies.

\subsection{Definition of a hidden Markov chain model}

A $J$-state HMC model can be viewed as a pair of discrete-time stochastic processes 
$(\BSS, \BSX)=\left( S_{t}, X_t\right)_{t=0,1,\ldots}$ where 
$\BSS$ is an unobserved Markov chain with finite state space $\left\{
0,\ldots,J-1\right\}$ and parameters:
\begin{itemize}
\item $\pi_{j}=P\left(  S_{0}=j\right)  $ with $\sum
_{j}\pi_{j}=1$ (initial probabilities), and
\item $p_{ij}=P\left(  S_{t}=j|S_{t-1}=i\right)  $
with $\sum_{j}p_{ij}=1$ (transition probabilities).
\end{itemize}
The output process $\BSX$ is related to the state
process $\BSS$ by the emission (or observation) probabilities 
%\begin{center}%
\[
b_{j}\left(  x\right)  =P\left(  X_{t}=x|S_{t}=j\right)  \;\text{with
}
\sum\limits_x b_{j}\left(  x\right)  =1.
\]
%\end{center}
Since the emission distributions
$(b_j)_{j=0,\ldots,J-1}$ are such that 
% $b_i(x) > 0$ and $b_j(x) > 0$ may be satisfied for some different
% $i,j$, that is 
a given output $x$ may be observed in different 
states, the state process $\BSS$ cannot be deduced
without uncertainty from the outputs, but is observable only indirectly
through output process $\BSX$.
To simplify the algorithm presentation, we consider a discrete
univariate output process. Since this work focuses on conditional 
distributions of states given the outputs, this assumption is not
restrictive. 

% The definition of the emission probabilities expresses the assumption that
% $X_t$ is conditionally independent from
% $\{X_{t'}\}_{t' \neq t}$ and $\{S_{t'}\}_{t' \neq t}$ given $S_t$. 
In the sequel,\ $X_{0}^{t}=x_{0}^{t}$ is a shorthand for
$X_{0}=x_{0},\ldots,X_{t}=x_{t}$ (this convention transposes to the state
sequence $S_{0}^{t}=s_{0}^{t}$). For a sequence of length $T$,
$X_{0}^{T-1}=x_{0}^{T-1}$ is simply noted $\BSX=\bsx$. In the derivation of
the algorithms for computing entropy profiles, we will use repeatedly the fact that if
$(S_t)_{t=0,1,\ldots}$ is a first-order Markov chain, the time-reversed process is
also a first-order Markov chain.

\subsection{Reminders: forward-backward algorithm and algorithm of 
Hernando \textit{et al}.\ (2005)}
\label{subsec:hmc_reminder}
The forward-backward algorithm aims at computing the smoothed
probabilities $L_{t}(j)=P(S_{t}=j|\BSX=\bsx)$
and can be stated as follows (Devijver,
1985\nocite{devijver1985}). 
The forward recursion is initialized at $t=0$ and for $j=0,\ldots,J-1$ as follows:
\begin{align}
F_{0}\left(  j\right)   &  =P\left(  S_{0}=j|X_{0}=x_{0}\right) \nonumber\\
&  =\frac{b_{j}\left(  x_{0}\right)  }{N_{0}}\pi_{j}. \label{forward initial}%
\end{align}
The recursion is achieved, for $t=1,\ldots,T-1$ and for $j=0,\ldots,J-1$, using:
\begin{align}
%{\mbox{for }} t=1,\ldots,T-1;\, {\mbox{for }} j=0,\ldots,J-1,\quad
F_{t}\left(  j\right)   &  =P\left(  S_{t}=j|X_{0}^{t}=x_{0}^{t}\right)
\nonumber\\
&  =\frac{b_{j}\left(  x_{t}\right)  }{N_{t}}\sum\limits_i\,p_{ij}F_{t-1}\left(  i\right). \label{forward recursion}%
\end{align}
\noindent The normalizing factor $N_{t}$ is obtained directly during the
forward recursion as follows%
\begin{align}
N_{t}  &  =P\left(  X_{t}=x_{t}|X_{0}^{t-1}=x_{0}^{t-1}\right) \nonumber\\
&  =\sum\limits_jP\left(  S_{t}=j,X_{t}=x_{t}|X_{0}^{t-1}=x_{0}^{t-1}\right)  ,\nonumber
\end{align}
with
\[
P\left(  S_{0}=j,X_{0}=x_{0}\right)  = b_{j}\left(  x_{0}\right)  \pi_{j},
\]
and
\[
P\left(  S_{t}=j,X_{t}=x_{t}|X_{0}^{t-1}=x_{0}^{t-1}\right)  =b_{j}\left(
x_{t}\right)  \sum\limits_i\,p_{ij}F_{t-1}\left(  i\right).
\]
% for $t=1,\ldots,T-1$.

\noindent The backward recursion is initialized at $t=T-1$ and for
$j=0,\ldots,J-1$ as follows:
\begin{equation}
L_{T-1}\left(  j\right)  =P\left(  S_{T-1}=j|\BSX=\bsx \right)
=F_{T-1}\left(  j\right). \label{backward initial}%
\end{equation}
The recursion is achieved, for $t=T-2,\ldots,0$ and for $j=0,\ldots,J-1$, using:
\begin{align}
% {\mbox{for }} t=T-2,\ldots,0;\, {\mbox{for }} j=0,\ldots,J-1, \quad
L_{t}\left(  j\right)  
% =\frac{1}{N_{t+1}}\left\{  \sum\limits_k \frac{L_{t+1}\left(  k\right)  }{F_{t+1}\left(  k\right)  }b_{k}\left(
% x_{t+1}\right)  p_{jk}\right\}  F_{t}\left(  j\right) \nonumber\\
& =  P\left(  S_{t}=j|\BSX=\bsx  \right) \nonumber \\ 
&  =  \left\{  \sum\limits_k \frac{L_{t+1}\left(  k\right)  }{G_{t+1}\left(  k\right)  }p_{jk}\right\}
F_{t}\left(  j\right)  , \label{backward recursion}%
\end{align}
where
\begin{align*}
G_{t+1}\left(  k\right)   &  
% =F_{t+1}\left(  k\right)  N_{t+1}/b_{k}\left(
% x_{t+1}\right) \\
 =P\left(  S_{t+1}=k|X_{0}^{t}=x_{0}^{t}\right) \\
&  =\sum\limits_j\,p_{jk}F_{t}\left(  j\right).
\end{align*}

\noindent These recursions rely on conditional independence properties between
hidden and observed variables in HMC models. Several recursions given
in Section \ref{sec:profiles_hmc} rely on the following relations, due to
the time-reversed process of $\left(  S_{t},X_{t}\right)_{t=0,1,\ldots}$ being also a
hidden first-order Markov chain: for $t=1,\ldots,T-1$ and for $i,j=0,\ldots,J-1$,
\begin{align*}
P\left(  S_{t-1}=i|S_{t}=j,\BSX=\bsx\right)   &  =P\left(
S_{t-1}=i|S_{t}=j,X_{0}^{t}=x_{0}^{t}\right)  \\
&  =P\left(  S_{t-1}=i|S_{t}=j,X_{0}^{t-1}=x_{0}^{t-1}\right)  ,\\
P\left(  S_{0}^{t-1}=s_{0}^{t-1}|S_{t}=j,\BSX=\bsx\right)   &
=P\left(  S_{0}^{t-1}=s_{0}^{t-1}|S_{t}=j,X_{0}^{t}=x_{0}^{t}\right)  \\
&  =P\left(  S_{0}^{t-1}=s_{0}^{t-1}|S_{t}=j,X_{0}^{t-1}=x_{0}^{t-1}\right)  .
\end{align*}
%
%the following relations will be used:
%for any $t=2, \ldots, T$
%\begin{itemize}
%\item[$\bullet$] $S_t$ is independent from $S_0^{t-2}$ given $S_{t-1}$
%	     and $X_0^{t}$;
%\item[$\bullet$] $S_{t-1}$ is independent from $X_t$ given $S_{t}$
%	     and $X_0^{t-1}$.
%\end{itemize}
%(DETAIL EQUATIONS ?)

An algorithm was proposed by Hernando \textit{et al}.\
(2005\nocite{hernando2005}) for computing 
the entropy of the state sequence that explains an observed sequence in the
case of an HMC model. This algorithm includes the classic forward
recursion given by (\ref{forward initial}) and (\ref{forward
recursion}) as a building block. It requires a forward recursion on
entropies of partial state sequences $S_0^t$. (In the sequel, it is
understood that the entropy of hidden state variables refers to their
conditional entropies given observed values.)

\noindent This algorithm is initialized at $t=1$ and for
$j=0,\ldots,J-1$ as follows:
\begin{align}
&%{\mbox{for }} j=0,\ldots,J-1,\quad  
H\left(  S_{0}|S_{1}=j,X_{0}^{1}=x_{0}^{1}\right) \nonumber\\
& \quad =-\sum\limits_i P\left(  S_{0}=i|S_{1}=j,X_{0}^{1}=x_{0}^{1}\right)  \log P\left(
S_{0}=i|S_{1}=j,X_{0}^{1}=x_{0}^{1}\right). \label{entropy forward initial}%
\end{align}
% This algorithm relies on the following forward recursion:
The recursion is achieved, for $t=2,\ldots,T-1$ and for $j=0,\ldots,J-1$, using:
\begin{align}
& %{\mbox{for }} t=2,\ldots,T-1,\; {\mbox{for }} j=0,\ldots,J-1,\quad 
H\left( S_{0}^{t-1}|S_{t}=j,X_{0}^{t}=x_{0}^{t}\right) \nonumber \\
&  =-\sum\limits_{s_{0},\ldots,s_{t-1}}
P\left(  S_{0}^{t-1}=s_{0}^{t-1}|S_{t}=j,X_{0}^{t}=x_{0}^{t}\right)  \log
P\left(  S_{0}^{t-1}=s_{0}^{t-1}|S_{t}=j,X_{0}^{t}=x_{0}^{t}\right)
\nonumber\\
&  =-\sum\limits_{s_{0},\ldots,s_{t-2}}
\sum\limits_i P\left(  S_{0}^{t-2}=s_{0}^{t-2}|S_{t-1}=i,S_{t}=j,X_{0}^{t}=x_{0}%
^{t}\right)  P\left(  S_{t-1}=i|S_{t}=j,X_{0}^{t}=x_{0}^{t}\right) \nonumber\\
&  \times\left\{  \log P\left(  S_{0}^{t-2}=s_{0}^{t-2}|S_{t-1}=i,S_{t}%
=j,X_{0}^{t}=x_{0}^{t}\right)  +\log P\left(  S_{t-1}=i|S_{t}=j,X_{0}%
^{t}=x_{0}^{t}\right)  \right\} \nonumber\\
&  =-\sum\limits_i P\left(  S_{t-1}=i|S_{t}=j,X_{0}^{t-1}=x_{0}^{t-1}\right)  \left\{
\sum\limits_{s_{0},\ldots,s_{t-2}}
P\left(  S_{0}^{t-2}=s_{0}^{t-2}|S_{t-1}=i,X_{0}^{t-1}=x_{0}^{t-1}\right)
\right. \nonumber\\
&  \left.  
%% begin phantom
\vphantom{\sum\limits_{s_{0},\ldots,s_{t-2}}
P\left(S_{0}^{t-2}\right)}
%% end phantom
\times\log P\left(  S_{0}^{t-2}=s_{0}^{t-2}|S_{t-1}=i,X_{0}%
^{t-1}=x_{0}^{t-1}\right)  +\log P\left(  S_{t-1}=i|S_{t}=j,X_{0}^{t}%
=x_{0}^{t}\right) \right\} \nonumber\\
&  =\sum\limits_i P\left(  S_{t-1}=i|S_{t}=j,X_{0}^{t-1}=x_{0}^{t-1}\right)  \left\{  H\left(
S_{0}^{t-2}|S_{t-1}=i,X_{0}^{t-1}=x_{0}^{t-1}\right)  \right. \nonumber\\
& \quad \left.  -\log P\left(  S_{t-1}=i|S_{t}=j,X_{0}^{t-1}=x_{0}^{t-1}\right)
\right\}  , \label{entropy forward recursion}%
\end{align}
with
\begin{align*}
&  P\left(  S_{t-1}=i|S_{t}=j,X_{0}^{t}=x_{0}^{t}\right) \\
&  =\frac{P\left(  S_{t}=j,S_{t-1}%
=i|X_{0}^{t-1}=x_{0}^{t-1}\right)  }
{P\left(  S_{t}=j|X_{0}^{t-1}=x_{0}^{t-1}\right)  }\\
&  =\frac{p_{ij}F_{t-1}\left(  i\right)  }{G_{t}\left(  j\right)  }.
\end{align*}
The forward recursion (\ref{entropy forward recursion}) is a direct
consequence of the conditional independence properties within a
HMC model and can be interpreted as the chain rule
\begin{align}
& H\left(  S_{0}^{t-1}|S_{t}=j,X_{0}^{t}=x_{0}^{t}\right) \nonumber \\    
& = H\left(
S_{0}^{t-2}|S_{t-1},S_{t}=j,X_{0}^{t}=x_{0}^{t}\right)  +H\left(
S_{t-1}|S_{t}=j,X_{0}^{t}=x_{0}^{t}\right) \label{eq:chain_rule_sequence}
\end{align}
\noindent with
\begin{align*}
&  H\left(  S_{0}^{t-2}|S_{t-1},S_{t}=j,X_{0}^{t}=x_{0}^{t}\right) \\
&  =-\sum\limits_{s_{0},\ldots,s_{t-1}} P\left(  S_{0}^{t-1}=s_{0}^{t-1}|S_{t}=j,X_{0}^{t}=x_{0}^{t}\right)
\times\log P\left(  S_{0}^{t-2}=s_{0}^{t-2}|S_{t-1}=s_{t-1},S_{t}=j,X_{0}^{t}=x_{0}^{t}\right) \\
&  =-\sum\limits_i P\left(  S_{t-1}=i|S_{t}=j,X_{0}^{t}=x_{0}^{t}\right) 
\sum\limits_{s_{0},\ldots,s_{t-2}}
P\left(  S_{0}^{t-2}=s_{0}^{t-2}|S_{t-1}=i,X_{0}^{t-1}=x_{0}^{t-1}\right) \\
& \quad \times\log P\left(  S_{0}^{t-2}=s_{0}^{t-2}|S_{t-1}=i,X_{0}^{t-1}%
=x_{0}^{t-1}\right) \\
& = \sum\limits_i P\left(  S_{t-1}=i|S_{t}=j,X_{0}^{t-1}=x_{0}^{t-1}\right)
H(S_0^{t-2} |S_{t-1}=i,X_{0}^{t-1}=x_{0}^{t-1})
\end{align*}
and
\begin{align*}
& H\left(  S_{t-1}|S_{t}=j,X_{0}^{t}=x_{0}^{t}\right) \\
&  =-\sum\limits_i P\left(  S_{t-1}=i|S_{t}=j,X_{0}^{t-1}=x_{0}^{t-1}\right)  \log P\left(
S_{t-1}=i|S_{t}=j,X_{0}^{t-1}=x_{0}^{t-1}\right)  .
\end{align*}

\noindent Using a similar argument as in (\ref{entropy forward recursion}),
the termination step is given by
\begin{align}
&  H\left(  S_{0}^{T-1}|\BSX=\bsx\right) \nonumber\\
%& = -\sum\limits_{s_0,\ldots,s_{T-1}}
%P\left(S_{0}^{T-1}=s_{0}^{T-1}|\BSX=\bsx\right)
%\log P\left(S_{0}^{T-1}=s_{0}^{T-1}|\BSX=\bsx\right) \nonumber \\
%& = -\sum\limits_{s_0,\ldots,s_{T-2}} \sum\limits_j
%P\left(S_{0}^{T-2}=s_{0}^{T-2}|S_{T-1}=j, \BSX=\bsx\right)
%P\left(S_{T-1}=j | \BSX=\bsx\right) \nonumber \\
%& \times \left\{\log P\left(S_{0}^{T-2}=s_{0}^{T-2}|S_{T-1}=j,
% \BSX=\bsx\right) + \log 
%P\left(S_{T-1}=j | \BSX=\bsx\right)\right\} \nonumber \\
& = - \sum\limits_j P\left(S_{T-1}=j | \BSX=\bsx\right)
\left\{ \sum\limits_{s_0,\ldots,s_{T-2}} 
P\left(S_{0}^{T-2}=s_{0}^{T-2}|S_{T-1}=j, \BSX=\bsx\right)
\right. \nonumber \\
& 
\times \left. 
%% begin phantom
\vphantom{\sum\limits_{s_{0},\ldots,s_{t-2}}
P\left(S_{0}^{T-2}=s_{0}^{T-2}|S_{T-1}=j, \BSX=\bsx\right)}
%% end phantom
\log P\left(S_{0}^{T-2}=s_{0}^{T-2}|S_{T-1}=j,
 \BSX=\bsx\right) + \log 
P\left(S_{T-1}=j | \BSX=\bsx\right)\right\} \nonumber \\ 
&  =\sum\limits_j F_{T-1}\left(  j\right)  \left\{  H\left(
 S_{0}^{T-2}|S_{T-1}=j, \BSX=\bsx
\right)  -\log F_{T-1}\left(  j\right)  \right\}.
\label{entropy forward termination}%
\end{align}
The forward recursion, the backward recursion and the algorithm of
Hernando {\it{et al.}} (2005\nocite{hernando2005}) all have complexity 
in $\MO(J^2T)$.

\subsection{Entropy profiles for hidden Markov chain models}
\label{subsec:past_entropy_profiles}
In what follows, we derive algorithms to compute entropy profiles based
on conditional and partial entropies. Firstly, conditioning with
respect to past states is considered. Then, conditioning with
respect to future states is considered.

The proposed algorithms have a twofold aim, since they focus in
computing both
\begin{itemize}
\item profiles of partial state sequence entropies 
$\left( H(S_{0}^{t}| \BSX=\bsx)\right)_{t=0,\ldots,T-1}$ 
\item profiles of conditional entropies
$\left( H(  S_{t}|S_{t-1},\BSX=\bsx)\right)_{t=0,\ldots,T-1}$.
\end{itemize}
We propose a first solution where the partial state sequence entropies
are computed beforehand, and the conditional entropies are deduced
from the latter. Then, we propose an alternative solution where
the conditional entropies are computed directly, and the partial state
sequence entropies are extracted from these. 

The profiles of conditional entropies have the noteworthy property that
the global state sequence entropy can be decomposed as a sum of
entropies conditioned on the past states: 
\begin{eqnarray}
H\left( \BSS|\BSX=\bsx\right)  =H\left(  S_{0}%
|\BSX=\bsx\right)  +\sum\limits_{t=1}^{T-1}
H\left(  S_{t}|S_{t-1},\BSX=\bsx\right).
\label{eq:conditional_markov_sequence}
\end{eqnarray}
% The above equation is a particular case of Corollary
% \ref{cor:graph_entropy} in Section \ref{subsec:graphical_hmm} and will
% be proved in the general case, but it can also be deduced from equation
% \eqref{forward differencing} derived later in this section. 
This property comes from the fact that the state sequence $\BSS$ is
conditionally a Markov chain given $\BSX=\bsx$.

In this way, the state sequence uncertainty can be localized along the
observed sequence, $H(  S_{t}|S_{t-1},\BSX=\bsx)$
representing the local contribution at time $t$ to the state sequence
entropy. For $t=0,\ldots,T-1$, using conditional independence
properties within HMC models, we have
\begin{align}
&  H\left(  S_{0}^{t}|\BSX=\bsx\right) \nonumber\\
&  =-\sum\limits_{s_{0},\ldots,s_{t}}
P\left(  S_{0}^{t}=s_{0}^{t}|\BSX=\bsx\right)  \log P\left(
S_{0}^{t}=s_{0}^{t}|\BSX=\bsx\right) \nonumber\\
&  =-\sum\limits_jP\left(  S_{t}=j|\BSX=\bsx\right)  \left\{  \sum\limits_{s_{0}%
,\ldots,s_{t-1}}
P\left(  S_{0}^{t-1}=s_{0}^{t-1}|S_{t}=j,X_{0}^{t}=x_{0}^{t}\right)  \right.
\nonumber\\
& \quad \left.
\vphantom{\sum_{s}P\left(s\right)}
\times \log P\left(  S_{0}^{t-1}=s_{0}^{t-1}|S_{t}=j,X_{0}^{t}%
=x_{0}^{t}\right)  +\log P\left(  S_{t}=j|\BSX=\bsx\right)
\right\} \nonumber\\
&  =\sum\limits_jL_{t}\left(  j\right)  \left\{  H\left(  S_{0}^{t-1}|S_{t}=j,X_{0}^{t}%
=x_{0}^{t}\right)  -\log L_{t}\left(  j\right)  \right\} 
\label{partial entropy recursion}\\
&  =\sum\limits_jL_{t}\left(  j\right)  H\left(  S_{0}^{t-1}|S_{t}=j,X_{0}^{t}=x_{0}%
^{t}\right)  +H\left(  S_{t}|\BSX=\bsx\right)  . \nonumber
\end{align}

% Since $S_{0}^{t-1}$ is conditionally independent from
% $X_{t+1}^{T-1}$ given $S_{t}$ and $X_{0}^{t}$, we have 
% \[
% H(  S_{0}^{t-1}|S_{t},\BSX=\bsx)  =H\left(
% S_{0}^{t-1}|S_{t},X_{0}^{t}=x_{0}^{t}\right).
% \]
% Thus, application of the chain rule gives:
% \begin{align}
% &  H\left(  S_{0}^{t}|\BSX=\bsx\right)
%  = H\left(  S_{0}^{t-1}| S_t, \BSX=\bsx\right)
% + H\left(S_t | \BSX=\bsx\right) \nonumber\\
% &  =\sum\limits_{j}{%
%TCIMACRO{\tsum }%
%BeginExpansion
% {\textstyle\sum}
%EndExpansion
% }L_{t}\left(  j\right)  H\left(  S_{0}^{t-1}|S_{t}=j,X_{0}^{t}=x_{0}%
% ^{t}\right)  +H\left(  S_{t}|\BSX=\bsx\right)  .
% \label{partial entropy recursion}%
% \end{align}

\noindent Using a similar argument as in \eqref{eq:chain_rule_sequence}, equation 
\eqref{partial entropy recursion} can be interpreted as the chain rule
$$
 H\left(  S_{0}^{t}| \BSX=\bsx \right) %\nonumber \\    
 = H\left(S_{0}^{t-1}|S_{t}, \BSX=\bsx \right)
+H\left(S_{t}|\BSX=\bsx \right)
$$

In this way, the profile of partial state sequence entropies $\left(
H( S_{0}^{t}|\BSX=\bsx)\right)_{t=0,\ldots,T-1}$ 
can be computed as a byproduct of the
forward-backward algorithm where the usual forward recursion
(\ref{forward recursion}) and the recursion (\ref{entropy forward recursion})
proposed by Hernando {\it{et al.}} (2005\nocite{hernando2005}) are
mixed. The conditional entropies are
then directly deduced by first-order differencing
\begin{align}
H\left(  S_{t}|S_{t-1},\BSX=\bsx\right)    & =H\left(
S_{t}|S_{0}^{t-1},\BSX=\bsx\right)  \nonumber\\
& =H\left(  S_{0}^{t}|\BSX=\bsx\right)  -H\left(  S_{0}%
^{t-1}|\BSX=\bsx\right)  .\label{forward differencing}%
\end{align}

As an alternative, the profile of conditional entropies 
$\left(  H(
S_{t}|S_{t-1},\BSX=\bsx)\right)_{t=0,\ldots,T-1}$ 
could also be computed directly, as%
\begin{align}
&  H\left(  S_{t}|S_{t-1},\BSX=\bsx\right) \nonumber \\
&  =-\sum\limits_{i,j}
P\left(  S_{t}=j,S_{t-1}=i|\BSX=\bsx\right)  \log P\left(
S_{t}=j|S_{t-1}=i,\BSX=\bsx\right)
\label{eq:conditional_entropy_cmc}
\end{align}
with%
\begin{equation}
\left\lbrace
\begin{array}{ll}
P\left(  S_{t}=j|S_{t-1}=i,\BSX=\bsx\right)   &  =L_{t}\left(
j\right)  p_{ij}F_{t-1}\left(  i\right)  /\left\{  G_{t}\left(  j\right)
L_{t-1}\left(  i\right)  \right\}  {\mbox{ and }}\\
P\left(  S_{t}=j,S_{t-1}=i|\BSX=\bsx\right)   &  =L_{t}\left(
j\right)  p_{ij}F_{t-1}\left(  i\right)  /G_{t}\left(  j\right).
\end{array}
\right.
\label{eq:posterior_prob_hmc}
\end{equation}
These latter quantities are directly extracted during the backward recursion
(\ref{backward recursion}) of the forward-backward algorithm. 

In summary, a first possibility is to compute the profile of partial
state sequence entropies 
$\left( H( S_{0}^{t}|\BSX=\bsx)\right)_{t=0,\ldots,T-1}$ 
using the usual forward and backward recursions % (\ref{forward initial}),
% (\ref{forward recursion})
combined with
(\ref{entropy forward initial}),
(\ref{entropy forward recursion}) and (\ref{partial entropy recursion}),
%(with additional time complexity in $\MO(TJ)$)
from which the profile of conditional entropies 
$\left( H(  S_{t}
|S_{t-1},\BSX=\bsx)\right)_{t=0,\ldots,T-1}$ 
is directly deduced by first-order differencing
(\ref{forward differencing}).
%, with additional time complexity in $\MO(T)$. 
A second possibility is to compute the 
profile of conditional entropies directly using
the usual forward and backward recursions % (\ref{forward initial}),
% (\ref{forward recursion})
combined with
\eqref{eq:conditional_entropy_cmc}
% (with time complexity in $\MO(TJ^2)$)
and to deduce the profile of partial state sequence entropies by
summation.
%(with additional time complexity in $\MO(T)$).
The time complexity of both algorithms is in $\MO(J^2T)$.

\noindent The conditional entropy is bounded from above by the marginal
entropy (Cover \&Thomas, 2006, chap. 2):
\[
H\left(  S_{t}|S_{t-1},\BSX=\bsx\right)  \leq H\left(
S_{t}|\BSX=\bsx\right)  ,
\] 
with%
\begin{align*}
H\left(  S_{t}|\BSX=\bsx\right)   &  =-\sum\limits_j
P\left(  S_{t}=j|\BSX=\bsx\right)  \log P\left(  S_{t}%
=j|\BSX=\bsx\right) \\ 
&  =-\sum\limits_jL_{t}\left(  j\right)  \log L_{t}\left(  j\right)  .
\end{align*}
and the difference between the marginal and the conditional entropy is
the mutual information between $S_t$ and $S_{t-1}$, 
given $\BSX=\bsx$. Thus, the marginal entropy profile $\,$
$\left( H(  S_{t}| \BSX = \bsx) \right)_{t=0,\ldots,T-1}$ can be
viewed as pointwise upper bounds on the conditional entropy profile 
$\left(H(S_{t}|S_{t-1},\BSX=\bsx)\right)_{t=0,\ldots,T-1} $. 
The profile of marginal
entropies can be interpreted as a summary of the classic state profiles
given by the smoothed probabilities 
$\left(P(S_{t}=j|\BSX=\bsx)\right)_{t=0,\ldots,T-1; j=0,\ldots,J-1}$. 
Hence, the difference between the mar\-gi\-nal entropy 
$H(  S_{t}|\BSX=\bsx)$ 
and the conditional entropy $H(S_{t}|S_{t-1},\BSX=\bsx)$ 
can be seen as a defect of the classic state profiles, which provide
a representation of the state sequences such that global uncertainty is
overestimated.

\paragraph{Entropy profiles conditioned on the future for hidden Markov
    chain models}
The Markov property, which states that the past and the future are independent
given the present, essentially treats the past and the future symmetrically.
However, there is a lack of symmetry in the parameterization of a Markov
chain, with the consequence that only the state process conditioned on
the past is often investigated. However, the state uncertainty at time
$t$ may be better explained by the values of future states than past
states. Consequently, in the present context of state inference, we
chose to investigate the state process both forward and backward 
in time. 

Entropy profiles conditioned on the future states rely on the following
decomposition of the entropy of the state sequence, as a sum of
local entropies where state $S_t$ at time $t$ is conditioned on the
future states:
\[
H\left(  \BSS|\BSX=\bsx\right)
=\sum\limits_{t=0}^{T-2}
H\left(  S_{t}|S_{t+1},\BSX=\bsx\right)  +H\left(
S_{T-1}|\BSX=\bsx\right)  .
\]
This is a consequence of the reverse state process being a Markov chain,
given $\BSX=\bsx$.
% and from application of equation \eqref{eq:conditional_markov_sequence}.

An algorithm to compute the backward conditional entropies 
$H(  S_{t+1}^{T-1}|S_{t}=j,X_{t+1}%
^{T-1}=x_{t+1}^{T-1})$
can be proposed. This algorithm, detailed in Appendix 
\ref{future entropy profiles}, is similar to that of Hernando {\it{et
al.}} (2005\nocite{hernando2005}) but relies on a backward recursion.
Using similar arguments as in \eqref{partial entropy recursion},
we have
\begin{eqnarray}
H\left(  S_{t}^{T-1}|\BSX=\bsx\right)
=\sum\limits_j
L_{t}\left( j \right)  \left\{  
H\left(  S_{t+1}^{T-1}|S_{t}=j,X_{t+1}^{T-1}=x_{t+1}^{T-1}\right)  
-\log L_{t}(j)  \right\}.
\label{eq:cmc_reverse_entropy}
\end{eqnarray}
Thus, the profile of partial state sequence entropies $\left(H(
S_{t}^{T-1}|\BSX=\bsx)\right)_{t=0,\ldots,T-1}$ 
can be computed as a byproduct of the
forward-backward algorithm, where the usual backward recursion
\eqref{backward recursion} and the backward recursion 
for conditional entropies (see Appendix 
\ref{future entropy profiles}) are mixed. The conditional entropies
are then directly deduced by first-order differencing
\begin{align*}
H\left(  S_{t}|S_{t+1},\BSX=\bsx\right)    & =H\left(
S_{t}|S_{t+1}^{T-1},\BSX=\bsx\right)  \\
& =H\left(  S_{t}^{T-1}|\BSX=\bsx\right)  -H\left(
S_{t+1}^{T-1}|\BSX=\bsx\right).
\end{align*}
The profile of conditional entropies $\left( H(  S_{t}|S_{t+1}
,\BSX=\bsx)\right)_{t=0,\ldots,T-1}$ can also be
computed directly, as
\begin{align*}
  H\left(  S_{t}|S_{t+1},\BSX=\bsx\right)  % \\
  =-\sum\limits_{j,k}
P\left(  S_{t}=j,S_{t+1}=k|\BSX=\bsx\right)  \log P\left(
S_{t}=j|S_{t+1}=k,\BSX=\bsx\right)
\end{align*}
with%
\begin{align*}
P\left(  S_{t}=j|S_{t+1}=k,\BSX=\bsx\right)   &  =P\left(
S_{t}=j|S_{t+1}=k,X_{0}^{t}=x_{0}^{t}\right) \\
&  =p_{jk}F_{t}\left(  j\right)  /G_{t+1}\left(  k\right) \, {\mbox{ and }}\\
P\left(  S_{t}=j,S_{t+1}=k|\BSX=\bsx\right)   &  =L_{t+1}\left(
k\right)  p_{jk}F_{t}\left(  j\right)  /G_{t+1}\left(  k\right)  .
\end{align*}

The latter quantities are directly extracted during the forward
(\ref{forward recursion}) and backward recursions (\ref{backward recursion})
of the forward-backward algorithm. The conditional entropy is bounded from
above by the marginal entropy (Cover \& Thomas (2006\nocite{cover2006}),
chap. 2):
\[
H\left(  S_{t}|S_{t+1},\BSX=\bsx\right)  \leq H\left(
S_{t}|\BSX=\bsx\right)  .
\]

\section{Entropy profiles for hidden Markov tree models}
\label{sec:profiles_hmt}

In this section, hidden Markov tree (HMT) models are introduced, as a
particular case of graphical hidden Markov (GHM) models. A generic
additive decomposition of state entropy in GHM models is proposed, and
its implementation is discussed in the case of HMT
models.

\subsection{Graphical hidden Markov models}
\label{subsec:graphical_hmm}
Let $\MG$ be a directed acyclic graph (DAG) with vertex set $\MU$, 
and $\BSS = (S_u)_{u \in \MU}$ be a $J$-state process indexed by $\MU$.
Let $\MG(\BSS)$ be the graph with vertices $\BSS$, isomorphic to
$\MG$ (so that the set of vertices of $\MG(\BSS)$ may be assimilated
with $\MU$). It is assumed that $\BSS$ satisfies the graphical Markov
property with respect to $\MG(\BSS)$, in the sense defined by Lauritzen
(1996\nocite{lauritzen1996}). The states 
$S_u$ are observed indirectly through an output process 
$\BSX = (X_u)_{u \in \MU}$ such that given $\BSS$,
% (also denoted by $\BBSS=\bbss$), 
the $(X_u)_{u \in \MU}$ are independent, and for any $u$, $X_u$
is independent of $(S_v)_{v \in \MU; v \neq u}$ given $S_u$. 
% Moreover, it is assumed that given $S_u=j$, $X_u=x$ has probabiliy $b_j(x)$. 
Then process
$\BSX$ is referred to as a GHM model with respect to DAG $\MG$.

Let $\pa(u)$ denote the set of parents of $u \in \MU$. For any subset
$E$ of $\MU$, let $\BSS_E$ denote $(S_u)_{u \in E}$.
As a consequence from the Markov property of $\BSS$, the following
factorization of $P_{\BSS}$ holds for any $\bss$ (Lauritzen,
1996\nocite{lauritzen1996}):
\begin{eqnarray*}
P(\BSS=\bss) =
\prod\limits_u P(S_u =s_u| \BSS_{\spa(u)} = \bss_{\spa(u)}),
\end{eqnarray*}
where $P(S_u =s_u| \BSS_{\spa(u)} = \bss_{\spa(u)})$ must be understood as 
$P(S_u =s_u)$ if $\pa(u) = \emptyset$. This factorization property is
shown by induction on the vertices in $\MU$, starting from the sink
vertices (vertices without children), and ending at the source vertices
(vertices without parents). 
%Concerning graph theory, we use the
%terminology of Goldreich (2008\nocite{goldreich2008}, Appendix G).

In the particular case where $\MG$ is a rooted tree graph, $\BSX$ is
called a hidden Markov out-tree with conditionally-independent children
states, given their parent state (or more shortly, a hidden Markov tree
model). This model was introduced by Crouse {\it{et al.}}
(1998\nocite{crouse1998}) in the context of signal and image processing
using wavelet trees. The state process $\BSS$ is called a Markov tree.

The following notations will be used for a tree graph $\Tree$: for any vertex
$u$, $c\left(  u\right)$ denotes the set of children of $u$ and
$\rho\left(  u\right)  $ denotes its parent. Let $\Tree_u$ denote the
complete subtree rooted at vertex $u$, $\BBSX_{u}=\bbsx_{u}$ denote the
observed complete subtree rooted at $u$, $\BBSX_{c\left(  u\right)
}=\bbsx_{c\left(u\right)  }$ 
denote the collection of observed subtrees rooted at children of vertex $u$
(that is, subtree $\bbsx_{u}$ except its root $x_{u}$),
$\BBSX_{u\backslash v}=\bbsx_{u\backslash v}$ the subtree
$\bbsx_u$ except the subtree $\bbsx_v$ (assuming that
$\bbsx_v$ is a proper subtree of $\bbsx_{u}$), and finally
$\BBSX_{b(u)}=\bbsx_{b(u)}$ the family of brother
subtrees $(\BBSX_v)_{v \in \rho(u); v \neq u}$ of $u$ (assuming that $u$
is not the root vertex).
%, and finally $\BBSX_{u\backslash c\left(  v\right)  } =
% \bbsx_{u\backslash c\left(  v\right)  }$ the subtree $\bbsx_u$ 
% except the subtrees $\bbsx_{c(v)}$. 
This notation
transposes to the state process with for instance 
$\BBSS_{u}=\bbss_{u}$, the state subtree rooted at vertex $u$.
In the sequel, we will use the notation $\MU=\{0,\ldots,n-1\}$ to denote
the vertex set of a tree with size $n$, and the root vertex will be
$u=0$. Thus, the entire observed tree can be denoted by $\BBSX_0=\bbsx_0$,
although the shorter notation $\BSX=\bsx$ will be used hereafter. These
notations are illustrated in Figure \ref{fig:notations}. 

\begin{figure}[hbtp]
\begin{center}
%\hline
\input{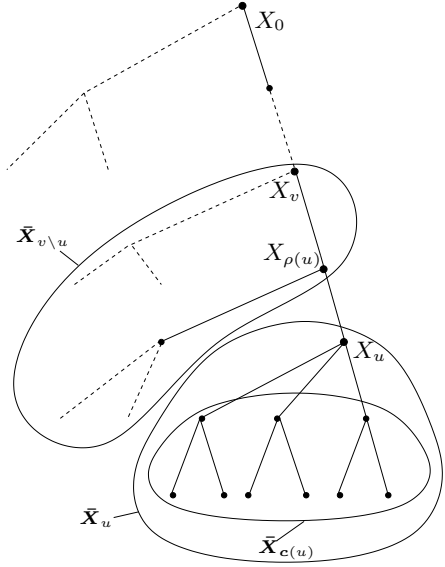}
\caption[Markov tree]
{\label{fig:notations}{\textit{The notations used for indexing trees}}}
%\hline
\end{center}
\end{figure}

A $J$-state HMT model $(\BSS, \BSX) = (S_u, X_u)_{u \in \MU}$ is defined
by the following parameters: 
\begin{itemize}
\item initial probabilities (for the root vertex) $\pi_{j}=P\left(
S_{0}=j\right)  $ with $\sum_{j}\pi_{j}=1$,
\item transition probabilities $p_{jk}=P\left(  S_{u}=k|S_{\rho\left(
u\right)  }=j\right)  $ with $\sum_{k}p_{jk}=1$,
\end{itemize}
and by the emission distributions defined as in HMC models by 
$P(X_u = x | S_u = j) = b_j(x)$.

In GHM models, the state process is conditionally Markovian in the
following sense:
\begin{prop}
Let $(\BSS, \BSX)$ be a GHM model with respect to DAG $\MG$. Then for
any $\bsx$, the conditional distribution of 
$\BSS$ given $\BSX=\bsx$ satisfies the Markov property on $\MG$ and
for any $\bss$, 
\[
P(\BSS=\bss | \BSX=\bsx) = 
\prod\limits_u P(S_u =s_u| \BSS_{\spa(u)} = \bss_{\spa(u)}, \BSX=\bsx),
\]
where $P(S_u =s_u| \BSS_{\spa(u)} = \bss_{\spa(u)}, \BSX=\bsx)$ denotes
$P(S_u =s_u| \BSX=\bsx)$ if $pa(u) = \emptyset$.
\label{prop:conditional_markov}
\end{prop}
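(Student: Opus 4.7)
The plan is to reduce the statement to two elementary ingredients: the joint factorization of $(\BSS,\BSX)$ on $\MG$, and a chain-rule argument along a topological ordering. First I would combine the two structural assumptions. The Markov factorization of $\BSS$ on $\MG$, already recalled in the text, gives
\[
P(\BSS=\bss) = \prod_u P(S_u=s_u \mid \BSS_{\spa(u)}=\bss_{\spa(u)}),
\]
while the conditional independence of the $X_u$'s given $\BSS$, together with the fact that each $X_u$ depends only on $S_u$, gives $P(\BSX=\bsx \mid \BSS=\bss) = \prod_u b_{s_u}(x_u)$. Multiplying these and dividing by $P(\BSX=\bsx)$ yields
\[
P(\BSS=\bss \mid \BSX=\bsx) = \frac{1}{P(\BSX=\bsx)} \prod_u P(S_u=s_u \mid \BSS_{\spa(u)}=\bss_{\spa(u)})\, b_{s_u}(x_u),
\]
so the conditional law factorizes as a product of local factors on $\MG$, the only global term being the normaliser, which depends on $\bsx$ alone.

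Next I would upgrade this factorization to the directed local Markov property of the conditional distribution with respect to $\MG$. Lauritzen's theorem (1996) gives the equivalence between recursive factorization on a DAG and the directed Markov property, so the factor structure above — which has exactly the parent-child form on $\MG$ — suffices. Equivalently, one can augment $\MG$ with the observation nodes $X_u$ as additional leaves attached to the corresponding $S_u$: conditioning on $\BSX=\bsx$ blocks all paths through the observation layer, so that $S_u$ remains d-separated from its non-descendants in $\MG$ given $\BSS_{\spa(u)}$ and $\BSX$. In either form, this gives the first assertion of the proposition.

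Finally, to obtain the explicit product formula I would fix a topological ordering $u_0, u_1, \ldots, u_{n-1}$ of $\MU$ compatible with $\MG$, so that $\spa(u_i) \subseteq A_{i-1} := \{u_0, \ldots, u_{i-1}\}$ for every $i$, and apply the chain rule
\[
P(\BSS=\bss \mid \BSX=\bsx) = \prod_i P(S_{u_i}=s_{u_i} \mid \BSS_{A_{i-1}}=\bss_{A_{i-1}}, \BSX=\bsx).
\]
Because $A_{i-1}$ contains no descendant of $u_i$, the conditional directed local Markov property established in the previous paragraph lets me replace the conditioning set $A_{i-1}$ by $\spa(u_i)$ in each factor, producing the stated formula; the case $\spa(u_0)=\emptyset$ is covered by the convention already introduced. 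The main obstacle in this proof is precisely the intermediate step: transferring the factorization into the directed Markov property on the conditional law, since the $\bsx$-dependent normaliser and the fact that the factors are not themselves conditional probabilities for $\BSS$ forbid a purely syntactic identification with a Bayesian network. That is why the argument must either cite Lauritzen's theorem explicitly or pass through the d-separation calculation on the extended DAG.
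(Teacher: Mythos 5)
Your three-step architecture (joint factorization, then the directed Markov property of the posterior, then the chain rule along a topological ordering) is reasonable in outline, and you correctly single out the middle step as the crux; the trouble is that neither of the two justifications you offer for that step actually works. Route one: Lauritzen's equivalence between recursive factorization and the directed Markov property requires the factors to be \emph{normalized} kernels $p(s_u\mid \bss_{\spa(u)})$. What your first display produces is a product of unnormalized parent--child potentials $\psi_u(s_u,\bss_{\spa(u)})=P(s_u\mid \bss_{\spa(u)})\,b_{s_u}(x_u)$ divided by a global constant; since $\{u\}\cup\pa(u)$ is a complete subset of the moral graph, such a factorization only yields the undirected Markov property with respect to the \emph{moral} graph of $\MG$, which is strictly weaker than the directed Markov property as soon as some vertex has two or more parents. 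Route two: the d-separation claim is stated backwards. Each $X_v$ is a \emph{descendant} of $S_v$ in the extended DAG, and conditioning on a descendant of a collider \emph{opens} the path through that collider rather than blocking it; hence $S_u$ is in general not d-separated from its non-descendants given $\BSS_{\spa(u)}$ and $\BSX$.

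The gap is not merely presentational: for a general DAG the conclusion itself fails. Take $\MU=\{u,v,w\}$ with arcs $u\to v\leftarrow w$. The claimed factorization forces $P(s_u,s_w\mid\bsx)=P(s_u\mid\bsx)\,P(s_w\mid\bsx)$, whereas
\[
P(S_u=s_u,S_w=s_w\mid\BSX=\bsx)\;\propto\;P(S_u=s_u)\,b_{s_u}(x_u)\,P(S_w=s_w)\,b_{s_w}(x_w)\sum_{s_v}P(S_v=s_v\mid s_u,s_w)\,b_{s_v}(x_v),
\]
and the last sum does not split into a function of $s_u$ times a function of $s_w$ for generic transition and emission parameters. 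The statement, and your argument, become valid exactly when no vertex of $\MG$ has more than one parent (forests, in particular the out-trees used in the remainder of the paper), because then moralization adds no edges and no colliders occur among the $S_u$. For comparison, the paper's own proof is an induction over ancestrally closed subsets $A$ of $\MG$ whose induction step invokes precisely the conditional independence of $S_u$ and $\BSS_{A\setminus\pa(u)}$ given $\BSS_{\spa(u)}$ and $\BSX$ --- essentially your route two --- so your second variant is close in spirit to the intended argument; but to make either rigorous you must restrict to the collider-free case, or establish the required independence directly by summing out the descendants of $u$, which is exactly where single-parenthood is needed.
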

\begin{proof}
To prove this proposition, we consider a potential realization $(\bss,
\bsx)$ of process $(\BSS, \BSX)$. We introduce the following definitions
and notations: for $u \in \MU$, $An(u)$ denotes the set of ancestors of
 $u$ in $\MG$; for $A \subset U$, $An(A)=\{An(u)\}_{u \in A}$ 
and ${\bar An}(A)=An(A) \cup A$. Let $\BSS_A=\bss_A$ denote the state
process indexed by the graph induced by $A$.
% and $A \backslash \pa(u)$ the set $A$ except $\pa(u)$.
% $\MU \backslash D(u)$ the set of non-descendants of $u$ in $\MG$
% (which contains $u$) and $\MU \backslash \pa(u)$ the set $\MU$ except
% $\pa(u)$ and $D(u)$. 
By conditional independence of the $(X_u)_{u \in \MU}$ given
$\BSS$, the process $(\BSS, \BSX)$ follows the Markov property on
the DAG $\MG(\BSS, \BSX)$ obtained from $\MG(\BSS)$ by addition of the set
of vertices $\{X_u | u \in \MU\}$ and the set of arcs $\{(S_u, X_u) | u
\in \MU \}$. 

It is proved by induction on subgraphs $A$ of $\MG$ that 
if ${\bar An}(A)=A$, then 
\begin{eqnarray}
P(\BSS_A=\bss_A | \BSX=\bsx) = 
\prod\limits_{v \in A}
P(S_v =s_v| \BSS_{\spa(v)} = \bss_{\spa(v)}, \BSX=\bsx).
\label{eq:induction}
\end{eqnarray}
Since the joint distribution of state vertices in different connected
components $(\MG_1,\ldots,\MG_C)$ of $\MG$ can be factorized as 
$\prod_c P(\BSS_{\MG_c}=\bss_{\MG_c} | \BSX=\bsx )$,
equation \eqref{eq:induction} is proved separately for each connected
component. 

It is easily seen that if $u$ is a source of $\MG$, both the right-hand
and the left-hand sides of equation \eqref{eq:induction} are equal to 
$P(S_u = s_u|\BSX=\bsx )$. To prove the induction step, we consider a
vertex $u \notin A$ such that $\pa(u) \subset A$. If such vertex does
not exist, $A$ is a connected component of $\MG$, which terminates the
induction. 

Otherwise, let $A'$ denote $A \cup \{u\}$. Then ${\bar An}(A')=A'$ and
\begin{align*}
P(\BSS_{A'}=\bss_{A'} | \BSX=\bsx)
= & P(S_u = s_u | \BSS_{\spa(u)}=\bss_{\spa(u)}, 
\BSS_{A \backslash \pa(u)}=\bss_{A \backslash \pa(u)}, \BSX=\bsx) \\
 \quad & \times P(\BSS_{\spa(u)}=\bss_{\spa(u)}, 
\BSS_{A \backslash \pa(u)}=\bss_{A \backslash \pa(u)}
| \BSX=\bsx) \nonumber \\
= &  P(S_u = s_u | \BSS_{\spa(u)}=\bss_{\spa(u)}, \BSX=\bsx)
 P(\BSS_A=\bss_A | \BSX=\bsx)
\end{align*}
since the Markov property on $\MG(\BSS, \BSX)$ implies conditional
independence of $S_u$ and $\BSS_{A \backslash \pa(u)}$ given
$S_{\spa(u)}$ and $\BSX$. 

The proof is completed by application of induction equation
\eqref{eq:induction}.
\end{proof}

From application of the chain rule
(Cover \& Thomas, 2006, chap. 2\nocite{cover2006}) to Proposition
\ref{prop:conditional_markov}, the following corollary is derived:
\begin{cor}
Let $(\BSS, \BSX)$ be a GHM model with respect to DAG $\MG$. Then 
for any $\bsx$,
\[
H(\BSS | \BSX=\bsx) = 
\sum\limits_u H(S_u | \BSS_{\spa(u)}, \BSX=\bsx),
\]
where $H(S_u | \BSS_{\spa(u)}, \BSX=\bsx)$ denotes
$H(S_u | \BSX=\bsx)$ if $pa(u) = \emptyset$.
\label{cor:graph_entropy}
\end{cor}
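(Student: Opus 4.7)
The plan is to derive the corollary by a direct computation: take logarithms of the factorization from Proposition \ref{prop:conditional_markov}, then apply the definition of conditional entropy and marginalize term by term.

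First I would start from the definition
\[
H(\BSS | \BSX=\bsx) = - \sum_{\bss} P(\BSS=\bss | \BSX=\bsx) \log P(\BSS=\bss | \BSX=\bsx).
\]
By Proposition \ref{prop:conditional_markov}, the conditional probability inside the logarithm factorizes as a product over vertices $u \in \MU$ of the conditional factors $P(S_u = s_u | \BSS_{\spa(u)} = \bss_{\spa(u)}, \BSX = \bsx)$, with the convention that this reduces to $P(S_u = s_u | \BSX = \bsx)$ when $\pa(u) = \emptyset$. Taking logs turns the product into a sum, so
\[
H(\BSS | \BSX = \bsx) = - \sum_u \sum_{\bss} P(\BSS = \bss | \BSX = \bsx) \log P(S_u = s_u | \BSS_{\spa(u)} = \bss_{\spa(u)}, \BSX = \bsx).
\]

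Next, for each fixed $u$, I would marginalize the joint $P(\BSS = \bss | \BSX = \bsx)$ over all coordinates $s_v$ with $v \notin \{u\} \cup \pa(u)$, since the summand depends only on $s_u$ and $\bss_{\spa(u)}$. This yields the marginal joint $P(S_u = s_u, \BSS_{\spa(u)} = \bss_{\spa(u)} | \BSX = \bsx)$, and the $u$-th term becomes exactly
\[
- \sum_{s_u, \bss_{\spa(u)}} P(S_u = s_u, \BSS_{\spa(u)} = \bss_{\spa(u)} | \BSX = \bsx) \log P(S_u = s_u | \BSS_{\spa(u)} = \bss_{\spa(u)}, \BSX = \bsx),
\]
which is by definition $H(S_u | \BSS_{\spa(u)}, \BSX = \bsx)$. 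Summing over $u$ gives the desired identity.

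There is no real obstacle here beyond bookkeeping: the argument is the standard chain rule for entropy applied to the Markov factorization established in Proposition \ref{prop:conditional_markov}. The only subtlety to state explicitly is the convention for source vertices (those with $\pa(u) = \emptyset$), for which the corresponding term is simply $H(S_u | \BSX = \bsx)$; this matches the convention already declared in the corollary statement. The proof is therefore essentially a two-line application of the chain rule.
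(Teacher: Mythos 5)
Your proof is correct and follows essentially the same route as the paper, which derives the corollary in one line by applying the entropy chain rule to the factorization of $P(\BSS=\bss \mid \BSX=\bsx)$ established in Proposition \ref{prop:conditional_markov}. You have simply written out explicitly what that application amounts to (taking $-\log$ of the product, marginalizing each term down to $(S_u, \BSS_{\spa(u)})$, and recognizing the conditional entropies), including the correct handling of source vertices.
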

This result extends equation \eqref{eq:conditional_markov_sequence} for
HMC models to hidden Markov models indexed by DAGs.

It follows from Corollary \ref{cor:graph_entropy} that the global
entropy of the state process can be decomposed as a sum of conditional
entropies, where each term is the local contribution of state $S_u$ at
vertex $u$, and corresponds to the conditional entropy of this state
given the parent state (or equivalently, given the non-descendant
states, from the Markov property on $\MG(\BSS, \BSX)$).

The remainder of this Section focuses on the derivation of algorithms to
compute $H(\BSS | \BSX=\bsx)$ efficiently in HMT models.

\subsection{Reminder: upward-downward algorithm}
\label{subsec:hmt_reminder}
The upward-downward algorithm aims at computing the smoothed
probabilities $\xi_{u}(j)=P(S_{u}=j|\BSX=\bsx)$ and can be stated as
follows (Durand {\it{et al.}}, 2004\nocite{durand2004}). It consists in
three recursions, which all have complexities in $\MO(J^2n)$.

This algorithm requires preliminary computation of the state marginal
probabilities $P(S_u=j)$, computed by a downward recursion. This
recursion is initialized at the root vertex $u=0$ and for
$j=0,\ldots,J-1$ as follows:
\[
% j=0,\ldots,J-1,\quad 
P(S_0=j) = \pi_j.
\]
The recursion is achieved, for vertices $u \neq 0$ taken downwards and
for $j=0,\ldots,J-1$, using:
\[
P(S_u=j) = \sum_i p_{ij} P(S_{\rho(u)}=i).
\]
The upward recursion is initialized for each leaf as follows.
For $j=0,\ldots,J-1$,
\begin{align*}
% \forall j=0,\ldots,J-1,\quad 
\beta_u(j) = & P(S_u = j | X_u = x_u)\\
 = &   \frac{
b_j(x_u) P(S_u = j)}
{N_u}.
\end{align*}
The recursion is achieved, for internal vertices $u$ taken upwards and
for $j=0,\ldots,J-1$, using:
\begin{align*}
% & \forall j=0,\ldots,J-1,\quad 
\beta_{\rho(u),u}(j)  =  & \frac{
P({\boldsymbol{\bar X}}_u = {\boldsymbol{\bar x}}_u | S_{\rho(u)} = j)}
{P({\boldsymbol{\bar X}}_u = {\boldsymbol{\bar x}}_u)} \\
 = & \sum\limits_k \frac{\beta_u(k) p_{jk}}{P(S_u = k)}
\end{align*}
and
\begin{align*}
% {\lefteqn{
% \forall j=0,\ldots,J-1,\quad 
\beta_u(j)  = &
P(S_u = j | {\boldsymbol{\bar X}}_u = {\boldsymbol{\bar x}}_u) \\
 = &  {\dps{\frac{\left\{
\prod\limits_{v \in {\boldsymbol c}(u)}\beta_{u,v}(j) \right\}
b_j(x_u) P(S_u = j)}
{N_u}}}. 
\end{align*}
The normalizing factor $N_u$ is obtained directly during the upward
recursion by
$$
N_u  = P(X_u = x_u) = \sum\limits_j  b_j(x_u) P(S_u = j)
$$
for the leaf vertices, and
\begin{eqnarray*}
% \lefteqn{
N_u  =  {\displaystyle{\frac
{P({\boldsymbol{\bar X}}_u = {\boldsymbol{\bar x}}_u)}
{\prod\limits_{v \in {\boldsymbol c}(u)}
P({\boldsymbol{\bar X}}_v = {\boldsymbol{\bar x}}_v)}}}
  =  \sum\limits_j\left\{
\prod\limits_{v \in {\boldsymbol c}(u)}\beta_{u,v}(j) \right\}
 b_j(x_u) P(S_u = j) \label{eq:cupnorm}
\end{eqnarray*}
for the internal vertices.

\noindent The downward recursion is initialized at the root vertex $u=0$
and for $j=0,\ldots,J-1$ as follows:
$$
% \forall j=0,\ldots,J-1,\quad 
\xi_0(j) = P(S_0 = j | \BSX=\bsx) = \beta_0(j)
$$
The recursion is achieved, for vertices $u \neq 0$ taken downwards and
for $j=0,\ldots,J-1$, using:
\begin{align}
% & \forall j=0,\ldots,J-1,\quad 
\xi_u(j) = & P(S_u = j | \BSX=\bsx) \nonumber \\
 =  & \frac{\beta_u(j)}{P(S_u = j)}
\sum\limits_i \frac{p_{ij} \xi_{\rho(u)}(i)}{\beta_{\rho(u),u}(i)}.
\label{eq:downward}
\end{align}

\noindent These recursions rely on conditional independence properties between
hidden and observed variables in HMT models. In several recursions given
in Section \ref{sec:profiles_hmt}, the following relations will be used:
for any internal, non-root vertex $u$ and for $j = 1,\ldots,J$,
\begin{eqnarray*}
{\lefteqn{P(\BBSS_{c(u)} = \bbss_{c(u)} | S_u=j, \BBSS_{0 \backslash u} = \bbss_{0
 \backslash u}, \BSX=\bsx)}} \\
& \hspace{5cm} = &  P(\BBSS_{c(u)} = \bbss_{c(u)} | S_u=j, S_{\rho(u)} = s_{\rho(u)} , \BSX=\bsx)  \\
& \hspace{5cm} = &  P(\BBSS_{c(u)} = \bbss_{c(u)} | S_u=j, \BSX=\bsx)  \\
& \hspace{5cm} =& \prod\limits_{v \in c(u)} P(\BBSS_v = \bbss_v | S_u=j, \BSX=\bsx)  \\
& \hspace{5cm} =& \prod\limits_{v \in c(u)} P(\BBSS_v = \bbss_v | S_u=j, \BBSX_v=\bbsx_v) , % \\
\end{eqnarray*}
\begin{eqnarray*}
P(\BBSS_u = \bbss_u | \BBSS_{0 \backslash u} = \bbss_{0 \backslash u}, \BSX=\bsx)
& = & P(\BBSS_u = \bbss_u | S_{\rho(u)} = s_{\rho(u)} , \BSX=\bsx) \\
& = &  P(\BBSS_u = \bbss_u | S_{\rho(u)} = s_{\rho(u)}, \BBSX_u=\bbsx_u). % \\
\end{eqnarray*}

\subsection{Algorithms for computing entropy profiles for hidden
Markov tree models}
In HMT models, the generic decomposition of global state tree entropy yielded
by Corollary \ref{cor:graph_entropy} writes
\[
H(\BSS | \BSX=\bsx) = 
H(S_0 | \BSX=\bsx) +
\sum\limits_{u \neq 0} H(S_u | S_{\rho(u)}, \BSX=\bsx).
\]
As in the case of HMC models, such decomposition of $H(\BSS |
\BSX=\bsx)$ along the tree structure allows the computation
of entropy profiles, which rely on conditional and partial state entropies.

In a first approach, conditional entropies 
$H(S_u | S_{\rho(u)}, \BSX=\bsx)$ are directly extracted during the
downward recursion \eqref{eq:downward}. Then the conditional entropies 
$H(\BBSS_u | S_{\rho(u)}, \BSX = \bsx)$ and the partial state trees
entropies $H(\BBSS_u  | \BSX=\bsx)$ are computed using an upward
algorithm that requires the results of the upward-downward recursion.
They are also used in a downward recursion to compute profiles of partial
state tree entropies $H(\BBSS_{0 \backslash u} | \BSX=\bsx)$.

In a second approach, conditional entropies 
$H(\BBSS_{c(u)} | S_{u}=j, \BBSX_u=\bbsx_u)$ are computed directly during
the upward recursion given in Section \ref{subsec:hmt_reminder}, without
requiring the downward probabilities $\xi_u(j)$. These
conditional entropies are used to compute directly profiles 
of partial state tree entropies $H(\BBSS_u  | \BSX=\bsx)$ and
$H(\BBSS_{0 \backslash u} | \BSX=\bsx)$. 

We also provide an algorithm to compute conditional entropies given the
children states $H(S_u | \BSS_{c(u)}, \BSX=\bsx)$. We show
that contrarily to $H(S_u | S_{\rho(u)}, \BSX=\bsx)$, these
quantities do not correspond to local contributions to $H(\BSS |
\BSX=\bsx)$, but their sum over all vertices $u$ is lower bounded
by $H(\BSS | \BSX=\bsx)$.

\paragraph{Computation of partial state tree entropy using conditional
    entropy of state subtree given parent state}
Firstly, for every non-root vertex $u$, 
the conditional entropy 
\begin{align}
& % H(\BBSS_u | S_{\rho(u)}, \BSX = \bsx)= 
H(S_u | S_{\rho(u)}, \BSX = \bsx) \nonumber \\
& = -\sum_{i,j} P(S_u = j, S_{\rho(u)} = i | \BSX = \bsx)
\log P(S_u = j | S_{\rho(u)} = i, \BSX = \bsx),
\label{eq:child_parent_entropy}
\end{align}
is directly extracted during the downward recursion \eqref{eq:downward},
similarly to \eqref{eq:posterior_prob_hmc} for HMC models, with 
\begin{equation}
\left\lbrace
\begin{array}{ll}
 P(S_u = j | S_{\rho(u)} = i, \BSX = \bsx) & = \beta_u(j) p_{ij}
/ \{P(S_u=j) \beta_{\rho(u),u}(i)\}  {\mbox{ and }} \\ 
P(S_u = j, S_{\rho(u)} = i | \BSX = \bsx) & = \beta_u(j) p_{ij} 
\xi_{\rho(u)}(i) / \{P(S_u=j) \beta_{\rho(u),u}(i)\}.
\end{array}
\right.
\label{eq:predict_upwd}
\end{equation}

The partial state tree entropy $H(\BBSS_u | S_{\rho(u)}, \BSX=\bsx)$ is
computed using an upward algorithm. Initialization is achieved
at the leaf vertices $u$ using equation \eqref{eq:child_parent_entropy}.

\noindent The recursion is given, for all non-root vertices $u$ taken
upwards, by: 
\begin{align}
 H(\BBSS_u | S_{\rho(u)}, \BSX = \bsx)
 = & H(\BBSS_{c(u)} | S_u, S_{\rho(u)}, \BSX = \bsx)
+ H(S_u | S_{\rho(u)}, \BSX = \bsx) \nonumber \\
 =  &\sum_{v \in c(u)} H(\BBSS_v | S_u, \BSX = \bsx)
+ H(S_u | S_{\rho(u)}, \BSX = \bsx).
\label{eq:upwd_cond_rec}
\end{align}

\noindent Equation \eqref{eq:upwd_cond_rec} can be interpreted as the
chain rule 
\begin{eqnarray}
H(\BBSS_u | S_{\rho(u)}, \BSX = \bsx)
= H(S_u | S_{\rho(u)}, \BSX = \bsx)
+ \sum_{v \in \Tree_u} H(S_v | S_{\rho(v)}, \BSX = \bsx),
\label{eq:decomp_cond_entropy}
\end{eqnarray}
deduced from factorization
\begin{align*}
P(\BBSS_u = \bbss_u| S_{\rho(u)} = s_{\rho(u)}, \BSX = \bsx)
 = & P(S_u = s_u| S_{\rho(u)} = s_{\rho(u)}, \BSX = \bsx) \\
&  \times 
\prod_{v \in \Tree_u} P(S_v = s_v| S_{\rho(v)} = s_{\rho(v)}, \BSX = \bsx), 
\end{align*}
which is similar to Proposition \ref{prop:conditional_markov}.
An analogous factorization yields
\begin{align}
H(\BBSS_u | \BSX = \bsx)
 = & H(\BBSS_{c(u)} | S_u, \BSX = \bsx)
+ H(S_u | \BSX = \bsx) \label{eq:partial_upwd_entropy}\\
=  &\sum_{v \in c(u)} H(\BBSS_v | S_u, \BSX = \bsx)
+ H(S_u | \BSX = \bsx). \nonumber
\end{align}
Thus, profiles of partial state tree entropies 
$\left(H(\BBSS_u | \BSX = \bsx)\right)_{u \in \MU}$ can be deduced
from $\left(H(\BBSS_u| S_{\rho(u)}, \BSX=\bsx)\right)_{u \in \MU}$ and
the marginal entropies
\[
 H(S_{u} | \BSX = \bsx) = -\sum_j \xi_u(j) \log \xi_u(j).
\]
The global state tree entropy $H(\BSS | \BSX = \bsx)$ is obtained
from \eqref{eq:partial_upwd_entropy} at root vertex $u=0$. 

Profiles of partial state tree entropies
$\left(H(\BBSS_{0 \backslash u} | \BSX = \bsx)\right)_{u \in \MU}$ can also
be computed using the following downward recursion, initialized at every
child $u$ of the root vertex by 
\begin{align}
H(\BBSS_{0 \backslash u} | \BSX = \bsx)
= & H(S_0 | \BSX = \bsx) + H(\BBSS_{b(u)} | S_0, \BSX = \bsx)
\nonumber \\
= & H(S_0 | \BSX = \bsx) + 
\sum_{v \in b(u)} H(\BBSS_{v} | S_0, \BSX = \bsx).
\label{eq:dec_hmt_dwd1_init}
\end{align}
%where \eqref{eq:dec_hmt_dwd1_init} comes from conditional independence
%of the state subtrees $\left\lbrace{\BBSS_{v}}\right\rbrace_{v \in
%b(u)}$ given $S_0$ and $\BSX$, and since for any $v \in b(u), \,
%\rho(v) = 0$. 

\noindent The downward recursion is given at vertex $v$ with parent
$u=\rho(v)$ by 
\begin{align}
& H(\BBSS_{0 \backslash v} | \BSX = \bsx) \nonumber \\
& = H(S_u, \BBSS_{b(v)} |\BBSS_{0 \backslash u}, \BSX = \bsx) 
+ H(\BBSS_{0 \backslash u} |\BSX = \bsx) \nonumber\\
& =  H(\BBSS_{b(v)} | S_u, \BBSS_{0 \backslash u}, \BSX = \bsx) 
+ H(S_u | \BBSS_{0 \backslash u}, \BSX = \bsx)
+ H(\BBSS_{0 \backslash u} |\BSX = \bsx) \nonumber \\
& =  \sum_{w \in b(v)} H(\BBSS_{w} | S_{\rho(w)}, \BSX = \bsx)
+ H(S_u | S_{\rho(u)}, \BSX = \bsx)
+ H(\BBSS_{0 \backslash u} |\BSX = \bsx), \label{eq:dec_hmt_dwd1_ind}
\end{align}
where for any $w \in b(v),\, \rho(w)=u$.

\noindent Note that equations \eqref{eq:decomp_cond_entropy}, 
\eqref{eq:dec_hmt_dwd1_init} and \eqref{eq:dec_hmt_dwd1_ind} can be
written under the same form: if $\MV$ is a subtree of $\Tree$, then
the entropy of state subtree $\BBSS_{\MV}$ is
\begin{align*}
H(\BBSS_{\MV} | \BSX = \bsx) 
= \sum_{v \in \MV} H(S_v | S_{\rho(v)}, \BSX = \bsx),
\end{align*}
where $H(S_v | S_{\rho(v)}, \BSX = \bsx)$ refers to $H(S_v |
\BSX = \bsx)$ if $v$ is the root vertex or if
$\rho(v)$ does not belong to $\MV$.

Recursion \eqref{eq:dec_hmt_dwd1_ind} can be terminated 
at any leaf vertex $u$ using the following equation:
\begin{align*}
H(\BSS |\BSX = \bsx)
= & H(S_u |\BBSS_{0 \backslash u}, \BSX = \bsx)
+ H(\BBSS_{0 \backslash u} | \BSX = \bsx) \\
 = & H(S_u |S_{\rho(u)}, \BSX = \bsx)
+ H(\BBSS_{0 \backslash u} | \BSX = \bsx).
% \label{eq:dwd_term}
\end{align*}

In summary, the profile of conditional entropies 
$\left(H(S_{u} | S_{\rho(u)}, \BSX = \bsx)\right)_{u \in \MU}$ 
is firstly computed using \eqref{eq:child_parent_entropy}.
The conditional entropies are used in \eqref{eq:upwd_cond_rec} to 
derive the partial state tree entropies 
$H(\BBSS_u | S_{\rho(u)}, \BSX = \bsx)$, which are combined with the
marginal entropies in \eqref{eq:partial_upwd_entropy}
to derive profiles of partial state tree entropies 
$\left(H(\BBSS_u | \BSX = \bsx)\right)_{u \in \MU}$. 
They are also combined with the conditional entropies in 
\eqref{eq:dec_hmt_dwd1_ind} to compute the profiles
$\left(H(\BBSS_{0 \backslash u} | \BSX = \bsx)\right)_{u \in \MU}$.
The time complexity of the algorithm is in $\MO(J^2n)$.

As in HMC models, the marginal entropy profile 
$\left(H(S_{u} | \BSX = \bsx)\right)_{u \in \MU}$ can be
viewed as pointwise upper bounds on the conditional entropy profile 
$\left(H(S_{u} | S_{\rho(u)}, \BSX = \bsx)\right)_{u \in \MU}$.

\paragraph{Direct computation of conditional entropy of children state
    subtrees given each state}
As an alternative, the entropies $H(\BBSS_{c(u)} | S_{u}=j, \BBSX_u=\bbsx_u)$
can be computed directly during the upward recursion given in Section
\ref{subsec:hmt_reminder}. These are similar to the entropies
$H(S_0^{t-1}|S_t=j,X_0^t=x_0^t)$, used in 
the algorithm of Hernando \textit{et al}. (2005\nocite{hernando2005}) in
HMC models. Therefore, the following algorithm can be seen as a
generalization of their approach to HMT models. Its specificity, compared
with the approach based on the conditional entropies $H(S_{u} |
S_{\rho(u)}, \BSX = \bsx)$, is that it does not
require the results of the downward recursion.

This upward algorithm is initialized at the leaf vertices $u$ by
\[
H(\BBSS_{c(u)}|S_{u}=j,\BBSX_u=\bbsx_u) = 0.
\]

Since $\BBSS_{c(u)}$ and $\BBSX_{0 \backslash u}$ are conditionally
independent given $S_u$ and $\BBSX_u$, we have for any state $j$,
$H(\BBSS_{c(u)}|S_{u}=j,\BBSX_u=\bbsx_u) 
= H(\BBSS_{c(u)}|S_{u}=j,\BSX=\bsx)$. Combining this equation 
with \eqref{eq:partial_upwd_entropy} yields
\begin{align*}
H(\BBSS_{c(u)}|S_{u}=j,\BBSX_u=\bbsx_u) 
= & H(\BBSS_{c(u)}|S_{u}=j,\BBSX_{c(u)}=\bbsx_{c(u)})  \\
= & \sum_{v \in c(u)}H(\BBSS_{v}|S_{u}=j,\BBSX_v=\bbsx_v),
\end{align*}
which is similar to the backward recursion
\eqref{entropy_backward_recursion} in time-reversed HMC models (see
Appendix \ref{future entropy profiles}).
 
\noindent Moreover, for any $v \in c(u)$ with $c(v) \neq \emptyset$ and
for $j=0,\ldots,J-1$,
\begin{align}
& H(\BBSS_{v}|S_{u}=j,\BBSX_u=\bbsx_u) \nonumber \\
& = -\sum_{\bbss_{c(v)}, s_v} P(\BBSS_{c(v)}=\bss_{c(v)}, S_v=s_v |
 S_u=j,\BBSX_u=\bbsx_u) \nonumber \\
& \qquad \times \log P(\BBSS_{c(v)}=\bss_{c(v)}, S_v=s_v |
 S_u=j,\BBSX_u=\bbsx_u) \nonumber \\
& = -\sum_{\bss_{c(v)}} \sum_k P(\BBSS_{c(v)}=\bss_{c(v)} | 
S_v = k, S_u=j, \BBSX_u=\bbsx_u) 
P(S_v = k | S_u=j, \BBSX_u=\bbsx_u) \nonumber \\
& \qquad \times \{
\log P(\BBSS_{c(v)}=\bss_{c(v)} | S_v = k, S_u=j, \BBSX_u=\bbsx_u) 
+ \log P(S_v = k | S_u=j, \BBSX_u=\bbsx_u) \} \nonumber \\
& = -\sum_k P(S_v = k | S_u=j, \BBSX_v=\bbsx_v) \left\{
\sum_{\bss_{c(v)}} 
P(\BBSS_{c(v)}=\bss_{c(v)} | S_v = k, \BBSX_v=\bbsx_v) \right. \nonumber \\
& \quad \times \left.
\vphantom{\sum_s P\left(s\right)}
\log P(\BBSS_{c(v)}=\bss_{c(v)} | S_v = k, \BBSX_v=\bbsx_v) 
+ \log P(S_v = k | S_u=j, \BBSX_v=\bbsx_v) \right\} \nonumber \\
& = \sum_k P(S_v = k | S_u=j, \BBSX_v=\bbsx_v) \left\{
H(\BBSS_{c(v)} | S_v = k, \BBSX_v=\bbsx_v) \right. \nonumber \\
& \qquad \left.
\vphantom{P\left(s\right)}
- \log P(S_v = k | S_u=j, \BBSX_v=\bbsx_v) \right\}.
\label{eq:hernando_tree}
\end{align}
Thus, the recursion of
the upward algorithm is given by 
\begin{align}
& H(\BBSS_{c(u)}|S_{u}=j,\BBSX_u=\bbsx_u) 
\label{eq:upwd_state_cond_rec} \\
& =  \sum_{v \in c(u)} \left\{\sum_{s_v} 
P(S_v=s_v | S_u=j, \BBSX_v=\bbsx_v) \left[ 
H(\BBSS_{c(v)}|S_v=s_v, \BBSX_v=\bbsx_v) \right.\right. \nonumber \\
& \quad \left.\left. - \log P(S_v=s_v | S_u=j, \BBSX_v=\bbsx_v)
\right]
\vphantom{+\left\{\sum_{s_v} \right\}}
\right\}, \nonumber 
\end{align}
where $P(S_v=k | S_u=j, \BBSX_v=\bbsx_v) = 
P(S_v=k | S_u=j, \BSX=\bsx)$ is given by equation
\eqref{eq:predict_upwd}. 

The termination step is obtained by similar arguments as equation
\eqref{eq:partial_upwd_entropy}:
\begin{align*}
H(\BSS | \BSX = \bsx) = &
H(\BBSS_{c(0)} | S_0, \BSX = \bsx)
+ H(S_0 | \BSX = \bsx) \\
= & \sum\limits_j \beta_{0}\left(  j\right)  \left\{  H\left(  \BBSS_{c\left(  0\right)
}|S_{0}=j,\BSX = \bsx\right)  -\log\beta_{0}\left(
j\right)  \right\}  .
\end{align*}
If each vertex has a single child, HMT and HMC models coincide, and
equation \eqref{eq:upwd_state_cond_rec} appears as a generalization of
\eqref{eq:cmc_reverse_entropy} for the computation of conditional
entropies in time-reversed HMCs. 

% By conditional independence of $\BBSS_{c(u)}$ and 
% $\BBSX_{0 \backslash u}$ given $S_u$ and $\BBSX_u$, 
% $H(\BBSS_{c(u)} | S_u=j, \BSX = \bsx) = 
% H(\BBSS_{c(u)} | S_u=j, \BBSX_u = \bbsx_u)$ for any state $j$ and vertex
% $u$.
% Thus, 
Using similar arguments as in \eqref{eq:hernando_tree}, 
the partial state tree entropy $H(\BBSS_u | \BSX = \bsx)$
can be deduced from the conditional entropies 
$H(\BBSS_{c(u)}|S_{u}=j,\BBSX_u=\bbsx_u)$ (with $j=0,\ldots,J-1$) as follows: 
\begin{align}
H(\BBSS_u | \BSX = \bsx) = & H(\BBSS_{c(u)} | S_u, \BSX = \bsx)
+ H(S_u | \BSX = \bsx) \nonumber \\
 = & \sum_j \xi_{u}(j)  \left\{  H\left(\BBSS_{c\left(  u\right)
}|S_{u}=j,\BSX= \bsx \right)  -\log\xi_{u}\left(
j\right)  \right\} \nonumber \\
 = & \sum_j \xi_{u}(j)  \left\{  H\left(\BBSS_{c\left(  u\right)
}|S_{u}=j,\BBSX_u= \bbsx_u \right)  -\log\xi_{u}\left(
j\right)  \right\},
\label{eq:upward_conditional}
\end{align}
where the $(\xi_u(j))_{j=0,\ldots,J-1}$ are directly extracted from
the downward recursion \eqref{eq:downward}.
Moreover, since 
\begin{align*}
H(\BBSS_{0 \backslash u} | \BSX = \bsx)
= & H(\BBSS_{0} | \BSX = \bsx) -  H(\BBSS_{u} | S_{0 \backslash u}, 
\BSX = \bsx) \\
= & H(\BBSS_{0} | \BSX = \bsx) -  H(\BBSS_{u} | S_{\rho(u)}, 
\BSX = \bsx) 
\end{align*}
and since
\begin{align*}
H(\BBSS_{u} | S_{\rho(u)}, \BSX = \bsx) 
= H(\BBSS_{c(u)} | S_u, \BSX = \bsx) +  H(S_{u} | S_{\rho(u)}, 
\BSX = \bsx),
\end{align*}
the partial state tree entropy 
$H(\BBSS_{0 \backslash u} | \BSX = \bsx)$
can also be deduced from the conditional entropies 
$\left(H(\BBSS_{c(u)}|S_{u}=j,\BBSX_u=\bbsx_u)\right)_{j=0,\ldots,J-1}$ using 
\begin{align}
& H(\BBSS_{0 \backslash u} | \BSX = \bsx) \label{eq:ud_entropy} \\
& =  H(\BBSS_{0} | \BSX = \bsx) 
- \sum_j \xi_u(j) H(\BBSS_{c(u)}|S_{u}=j,\BBSX_u=\bbsx_u)
-  H(S_{u} | S_{\rho(u)}, \BSX = \bsx),
\nonumber
\end{align}
but the computation of $H(S_{u} | S_{\rho(u)}, \BSX = \bsx)$
using \eqref{eq:child_parent_entropy} is still necessary.

In summary, the profile of partial subtrees entropies 
$\left(H(\BBSS_{c(u)}|S_{u}=j,\BBSX_u=\bbsx_u)\right)_{u \in \MU;}$
${}_{j=0,\ldots,J-1}$ is firstly computed using \eqref{eq:upwd_state_cond_rec}.
The profile of partial state tree entropies 
$\left(H(\BBSS_u | \BSX = \right.$\\$\left.\bsx)\right)_{u \in \MU}$ is deduced
from these entropies and the smoothed probabilities, using
\eqref{eq:upward_conditional}. Computation of partial state tree entropies 
$\left(H(\BBSS_{0 \backslash u} | \BSX = \bsx)\right)_{u \in \MU}$
and conditional entropies 
$\left(H(S_{u} | S_{\rho(u)}, \right. \left.\BSX = \bsx)\right)_{u \in \MU}$
still relies on \eqref{eq:dec_hmt_dwd1_ind} and
\eqref{eq:child_parent_entropy}, essentially, although variant
\eqref{eq:ud_entropy} remains possible. The time complexity of the
algorithm is in $\MO(J^2n)$.

\paragraph{Entropy profiles conditioned on the children states in HMT
    models}
Up to this point, the proposed profile of conditional entropies has the
property that global state tree entropy is the sum of conditional entropies.
This is a consequence of Corollary \ref{cor:graph_entropy}, which
translates into HMT models by profiles of state entropy given the parent
state. 

However, as will be shown in Section \ref{sec:application}
(Application), the state uncertainty at vertex $u$ may be better
explained by the values of children states than that of the parent state
in practical situations. Consequently, profiles based on 
$H(S_{u}|\BBSS_{c(u)},\BSX = \bsx)$ have practical importance and
are derived below. Since $S_u$ is conditionally independent from
$\{S_v\}_{v \in c(u)}$ given $\BSS_{c(u)}$ and $\BSX$, we have 
$H(S_{u}|\BBSS_{c(u)},\BSX = \bsx) 
= H(S_{u}|\BSS_{c(u)},\BSX = \bsx)$. This quantity, bounded from
above by the marginal entropy $H(S_{u}|\BSX = \bsx)$, is computed
as follows:
\begin{align*}
 H(S_{u}|\BSS_{c(u)},\BSX = \bsx)
& = -\sum_j \sum_{\bss_{c(u)}} 
P(S_{u} = j, \BSS_{c(u)} = \bss_{c(u)} | \BSX = \bsx) \\
& \qquad \times 
\log P(S_{u} = j | \BSS_{c(u)} = \bss_{c(u)}, \BSX = \bsx),
\end{align*}
with
\begin{align}
 P(S_{u} = j, \BSS_{c(u)} = \bss_{c(u)} | \BSX = \bsx)
= & \xi_u(j) \prod_{v \in c(u)} 
P(S_{v} = s_v | S_u=j, \BSX = \bsx) 
\label{eq:children_clique}\\ 
= & \xi_u(j) \prod_{v \in c(u)} 
\frac{\beta_v(s_v)p_{j s_v}}{P(S_v=s_v) \beta_{u,v}(j)} \nonumber 
\end{align}
from equation \eqref{eq:predict_upwd}, and where equation
\eqref{eq:children_clique} comes from conditional independence of
$\{S_v\}_{v \in c(u)}$ given $S_u$. The quantities $\beta_v(k)$,
$\beta_{\rho(v),v}(j)$ and $P(S_v=k)$ are directly extracted from the
upward recursion in Section \ref{subsec:hmt_reminder}. Consequently,
\begin{align*}
&  P(S_{u} = j | \BSS_{c(u)} = \bss_{c(u)}, \BSX = \bsx)
= \frac{\xi_u(j) \prod\limits_{v \in c(u)} \left[ p_{j s_v} /
 \beta_{u,v}(j)\right]}
{\sum\limits_k \xi_u(k) \prod\limits_{v \in c(u)} \left[ p_{k s_v} /
 \beta_{u,v}(k)\right]}.
\end{align*}
Note that the time complexity of the algorithm for computing entropy
profiles conditioned on the children states is in $\MO(J^{c+1}n)$ in the
case of $c-$ary trees. This makes it the only algorithm among those in
this article whose complexity is not in $\MO(J^{2}n)$.

The profiles based on 
$H(S_{u} | \BSS_{c(u)}, \BSX = \bsx)$ 
satisfy the following property:
\begin{prop}
$$
H(\BSS | \BSX= \bsx) \leq \sum_u H(S_{u} | \BSS_{c(u)}, \BSX = \bsx)
$$
where $H(S_{u} | \BSS_{c(u)}, \BSX = \bsx)$ must be
understood as $H(S_{u} | \BSX = \bsx)$ if $u$ is a leaf vertex.
\label{prop:child_entropy}
\end{prop}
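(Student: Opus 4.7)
The plan is to combine the chain rule for entropy with the standard ``conditioning reduces entropy'' inequality. The key observation is that in a tree, when the vertices are visited in reverse topological order (children before their parents), the children of any vertex always appear before the vertex itself, so the children states are contained in the conditioning set at each step of the chain rule expansion.

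First I would fix an ordering $u_1, \ldots, u_n$ of $\MU$ that is a reverse topological ordering of $\Tree$, meaning that for every non-leaf $u_i$ and every $v \in c(u_i)$, $v$ appears earlier in the list than $u_i$. Such an ordering exists because $\Tree$ is a DAG. Let $A_i = \{u_1, \ldots, u_{i-1}\}$, so $\BSS_{A_i}$ is the collection of all states processed before $u_i$. By construction, $c(u_i) \subseteq A_i$ for every $i$ (and $c(u_i) = \emptyset$ for leaves, in which case the inclusion is trivial).

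Next I would apply the standard chain rule for conditional entropy, which is valid for any joint distribution, to obtain
\begin{equation*}
H(\BSS \mid \BSX = \bsx)
= \sum_{i=1}^{n} H(S_{u_i} \mid \BSS_{A_i}, \BSX = \bsx).
\end{equation*}
Then, since $\BSS_{c(u_i)}$ is a sub-vector of $\BSS_{A_i}$, the inequality ``conditioning reduces entropy'' (Cover \& Thomas, 2006, chap.~2) gives, for each $i$,
\begin{equation*}
H(S_{u_i} \mid \BSS_{A_i}, \BSX = \bsx)
\leq H(S_{u_i} \mid \BSS_{c(u_i)}, \BSX = \bsx),
\end{equation*}
with the right-hand side understood as $H(S_{u_i} \mid \BSX = \bsx)$ when $u_i$ is a leaf (consistent with the convention in the proposition). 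Summing over $i$ yields the desired inequality.

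There is no real obstacle: the only point that requires a tiny bit of care is verifying that a reverse topological ordering makes $c(u_i) \subseteq A_i$, which is immediate from the definition of such an ordering on a tree. Note that the inequality is generally strict because $\BSS_{A_i}$ contains states in other branches that, although not descendants of $u_i$, carry information about $S_{\rho(u_i)}$ and hence about $S_{u_i}$ through the conditional Markov structure given in Proposition \ref{prop:conditional_markov}; this is exactly why the sum of the children-conditional entropies is only an upper bound on $H(\BSS \mid \BSX = \bsx)$, not an equality, in contrast to Corollary \ref{cor:graph_entropy}.
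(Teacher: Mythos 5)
Your proof is correct. It rests on exactly the two ingredients the paper uses -- the chain rule for entropy and the fact that conditioning reduces entropy -- but you package them differently: you apply the chain rule once, globally, along a reverse topological (children-before-parents) ordering of $\MU$, and then drop everything in each conditioning set except $\BSS_{c(u_i)}$ termwise. The paper instead peels off vertices from the root downwards (each conditioned on all not-yet-processed vertices, i.e.\ essentially the same decomposition read in the opposite order) and organizes the argument as an explicit recursion, which it carries out only for binary trees ``for the sake of simplicity.'' Your formulation buys a genuinely cleaner statement: the inclusion $c(u_i) \subseteq A_i$ does all the work in one line, and the argument applies verbatim to trees of arbitrary degree, so no reduction to the binary case is needed. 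Your closing remark about why the inequality is generally strict is also sound: by the conditional Markov property, descendants of $u_i$ beyond its children add nothing once $\BSS_{c(u_i)}$ is known, so the slack comes precisely from the states of $A_i$ lying in other branches, which is consistent with the paper's observation that equality holds for linear trees.
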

Thus, these entropies cannot be interpreted at the local contribution of
vertex $u$ to global state tree entropy $H(\BSS | \BSX= \bsx)$,
unless equality is obtained in the above equation. (For example, if
$\Tree$ is a linear tree, or in other words a sequence.)
To assess the difference between the right-hand and the left-hand parts
of the above inequality in practical situations, numerical experiments
are performed in Section \ref{sec:application} (Application).  

A proof of Proposition \ref{prop:child_entropy} is given in Appendix
\ref{app:hmt_children_profiles}. 
A consequence of this inequality is that factorization 
\begin{align*}
&P(S_u = j, S_{c(u)} = s_{c(u)} | \BSX=\bsx) \\
& = P(S_u = j | \BSS_{c(u)} = \bss_{c(u)}, \BSX=\bsx) 
P( \BSS_{c(u)} = \bss_{c(u)} | \BSX=\bsx)
\end{align*}
cannot be pursued through a recursion on the children of
$u$. Essentially, this comes from the fact that any
further factorization based on conditional independence between the
$(S_v)_{v \in c(u)}$ must involve $S_u$.

\section{Applications of entropy profiles}
\label{sec:application}
To illustrate the practical ability of entropy profiles to provide
localized information on the state sequence uncertainty, two
cases of application are considered. The first case consists of the HMC
analysis of the earthquake dataset, published by 
Zucchini \& MacDonald (2009\nocite{zucchini2009}). The second case 
consists of the HMT analysis of the structure of pine branches, using an
original dataset. It is shown in particular that entropy
profiles allow regions that are non-ambiguously explained by the
estimated model to be differentiated from regions that are ambiguously
explained. Their ability to provide accurate interpretation of the model
states is also emphasized.

\subsection{HMC analysis of earthquakes}
\label{subsec:app_hmc}

The data consists of a single sequence of annual
counts of major earthquakes (defined as of magnitude 7 and above) for
the years 1900-2000; see Figure \ref{fig:earthquakes}. 
\begin{figure}[!htb]
\begin{center}
\includegraphics[width=14cm]{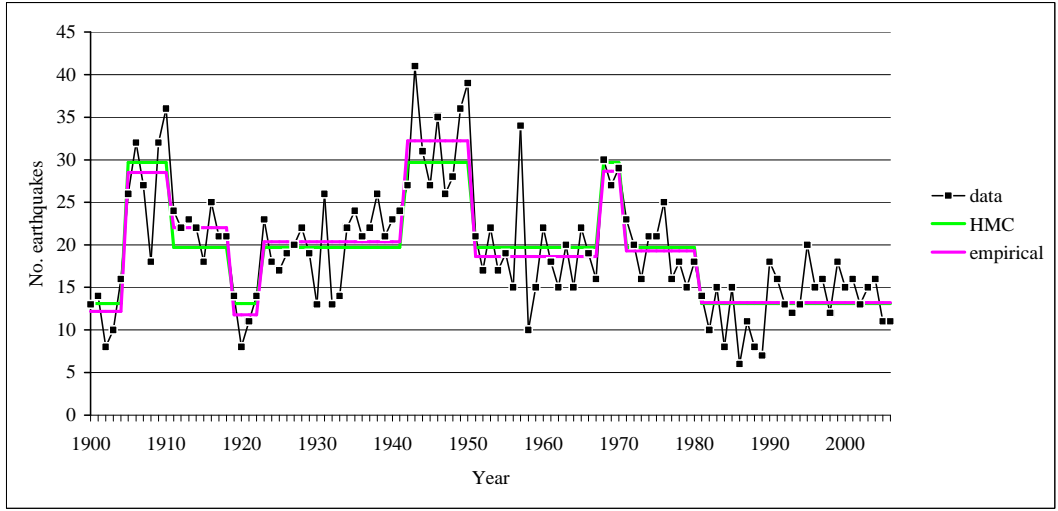}
\caption[Earthquake data]
{\textit{Earthquake data: Restored state sequence represented as step
functions, the level of the segments being either the parameter
$\widehat{\lambda}_{j}$\ of the Poisson observation distributions
corresponding to the restored state $j$ or the empirical mean estimated for
the segment.}}
\label{fig:earthquakes}
\end{center}
\end{figure}

A 3-state stationary HMC model with Poisson observation
distributions was estimated on the basis of this earthquake count
sequence and the estimated parameters of the Poisson observation
distributions were $\widehat{\lambda}_{1}=13.1$,
$\widehat{\lambda}_{2}=19.7$ and $\widehat{\lambda}_{3}=29.7$.
The restored state sequence is represented in Figure
\ref{fig:earthquakes} as step functions, the level of the segments being
either the parameter $\widehat{\lambda}_{j}$ of the Poisson observation
distributions corresponding to the restored state $j$ or the empirical
mean estimated for the segment. The state profiles computed by
the forward backward algorithm 
$\left\{  P\left(  S_{t}=j|\BSX=\bsx\right)  ;
j=0,\ldots,J-1;t=0,\ldots,T-1\right\}$ 
are shown in Figure \ref{fig:earthquakes_smoothed}. The entropy of the
state sequence that explains the observed sequence for the estimated
HMC model is bounded from above by the sum of the marginal entropies
\begin{align*}
H\left(  S_{0}^{T-1}|\BSX=\bsx\right)  =\sum\limits_t
H\left(  S_{t}|S_{t-1},\BSX=\bsx\right)   &  =14.9 \\
< \sum\limits_t H\left(  S_{t}|\BSX=\bsx\right)   &  =19.9.
\end{align*}

\begin{figure}[!htb]
\begin{center}
\includegraphics[width=14cm]{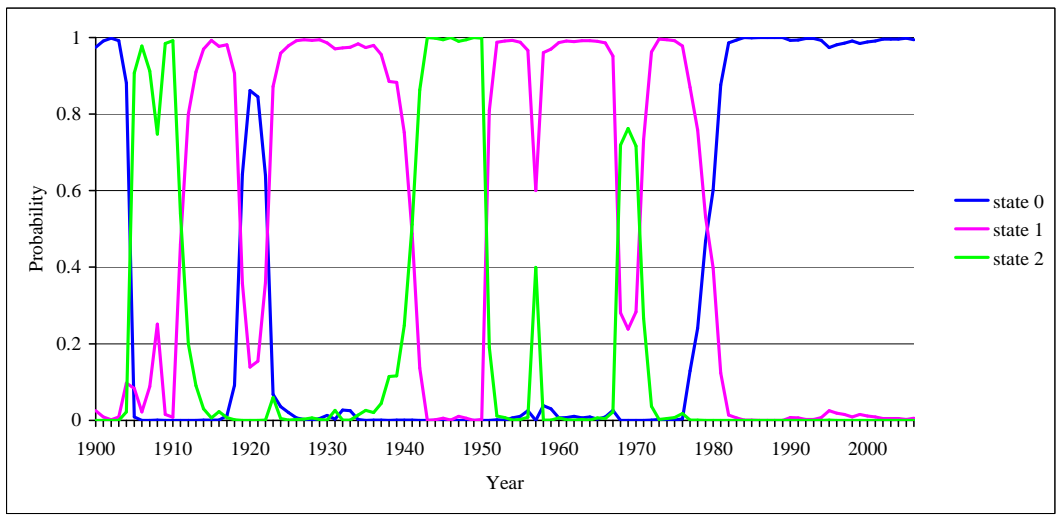}
\caption[Earthquake data: state profiles computed by the
 forward-backward algorithm]
{\textit{Earthquake data: state profiles computed by the
 forward-backward algorithm.}}
\label{fig:earthquakes_smoothed}
\end{center}
\end{figure}

For this example, we chose to show the entropies conditional on the
past, which are the only meaningful conditional entropies. Since $\log J$ is an
upper bound on $H(  S_{t}| \BSX=\bsx)$, the scale of these entropy profiles is in theory
$\left[  0,\log3\right]  $. However the scale of the entropy profiles is
rather $\left[  0,\log2\right]$, since in practice at most two states
can explain a given observation equally well; see 
Figure~\ref{fig:earthquakes_profiles_conditional}. 

Ignoring the dependency structure within the model to assess state
uncertainty leads to strong overestimation of this uncertainty.
This is highlighted in
Figure~\ref{fig:earthquakes_profiles_conditional} by the comparison of
the profile of entropies conditional on the past and the profile of marginal
entropies, and in Figure~\ref{fig:earthquakes_partial}, by the
comparison of the profile of partial state sequence entropies and the
profile of cumulative marginal entropies. It should be recalled that
the marginal entropy profile is a direct summary of the uncertainty
reflected in the smoothed probability profiles shown in
Figure~\ref{fig:earthquakes_smoothed}. Hence, such profiles should be
interpreted with caution.  

\begin{figure}[!htb]
\begin{center}
\includegraphics[width=14cm]{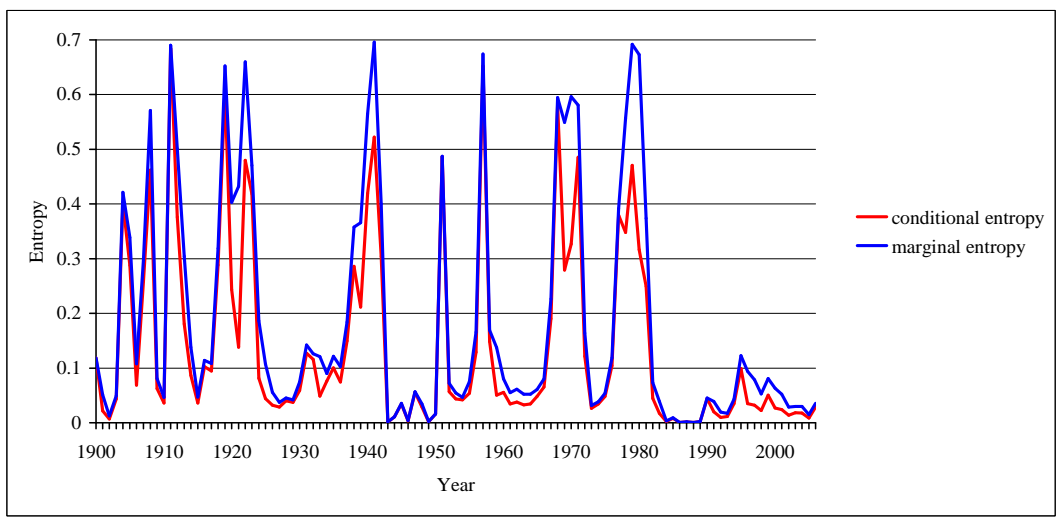}
\caption[Earthquake data: Profiles of entropies conditional on the
past and of marginal entropies]
{\textit{Earthquake data: Profiles of entropies conditional on the
past and of marginal entropies.}}
\label{fig:earthquakes_profiles_conditional}
\end{center}
\end{figure}

\begin{figure}[!htb]
\begin{center}
\includegraphics[width=14cm]{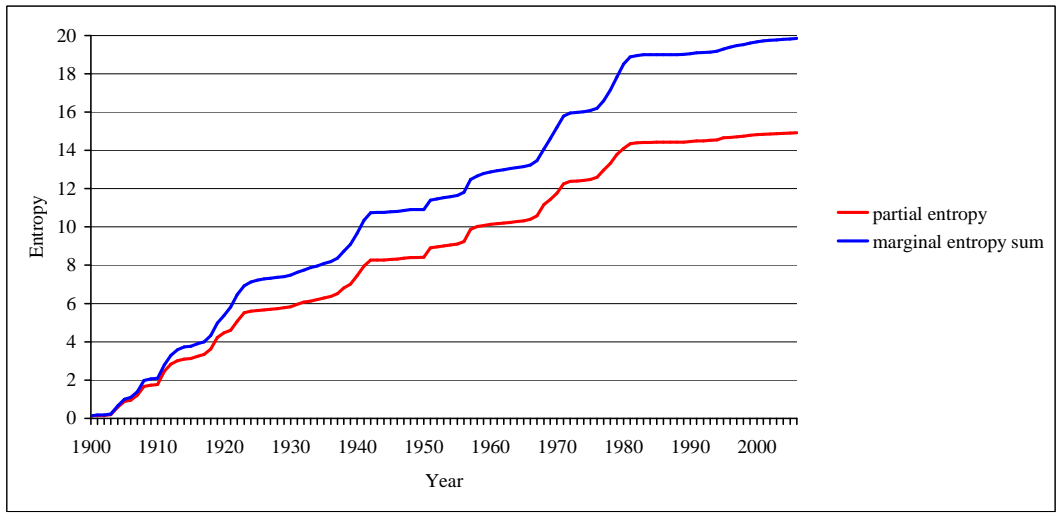}
\caption[Earthquake data: Profiles of partial state sequence
entropies and of cumulative marginal entropies]
{\textit{Earthquake data: Profiles of partial state sequence
entropies and of cumulative marginal entropies.}}
\label{fig:earthquakes_partial}
\end{center}
\end{figure}

\subsection{Analysis of the structure of Aleppo pines}
\label{subsec:app_hmt}
The aim of this study was to provide a model of the
architecture of Aleppo pines. The data set is composed of seven branches
of Allepo pines ({\it{Pinus Halepensis}} Mill., {\it{Pinaceae}}) planted
in the south of France (Clapiers, H\'erault). The branches come from
seven different 
individuals aged between 35 to 40 years. They were 
described at the scale of annual shoot, defined as the segment of stem
established within a year. Five
variables were recorded for each annual shoot: length (in cm), number of 
branches per tier, number of growth cycles and presence or absence of
female cones and of male cones. During a year, the growth of an annual
shoot can occur in one  
to three cycles. An annual shoot with several growth cycles is said
to be {\it{polycyclic}}. The number of growth cycles beyond the first one
corresponds to the third recorded variable. On these seven branches, a
total of 836 annual shoots was measured.
\subsubsection{Competing models}
\label{subsubsec:models}
An HMT model was estimated on basis of the seven branches, to identify classes
of annual shoots with comparable values for the variables, and 
to characterize the succession of the classes within the branches.
The branches were considered as mutually independent random realizations
of a same HMT model. The emission distributions were multinomial
distributions $\MM(1; p_1, \ldots, p_V)$ for each variable but the
length variable, where $V$ denotes the number of possible values for this
variable. The length variable, if included in the model, was assumed to
follow a negative binomial distribution, given the state. The five 
variables were assumed independent given the state.
The number of HMT states could not be deduced {\it{a priori}} from
biological arguments, so it had to be determined using statistical
criteria. We resorted to the Bayesian Information Criterion (BIC) 
to select this number. Although the consistency of BIC was proved 
for a restricted family of HMC models only (see Boucheron and Gassiat,
2007\nocite{boucheron2007}), its practical 
ability to provide useful results is established 
(see {\it{e.g.}} Celeux and Durand, 2008\nocite{celeux2008}).
The maximal number of possible states was set to 10.
For HMT models where the length variable was discarded, BIC selected a
5-state or a 6-state model (with respective values of BIC -2,047 and -2,039). 
The third best model had 4 states, with a BIC value of -2,074.
In the case of models including the length variable, a 6-state model
was selected (with a BIC value of -10,541) followed by 4-state and
5-state models (with respective values of BIC -10,545 and -10,558). Note
that since the estimated HMT models were not ergodic, the theoretical
properties of BIC are not established.

\subsubsection{Entropy profiles in the 5-state HMT model without length
   variable}
\label{subsubsec:application-5nolength}

The estimated transition matrix of the 5-state HMT model is 
\[
{\hat{P}} = 
\left[
\begin{array}{lllll}
0.18    & 0.47   & 0.33  & 0.02  & 0    \\
0.01    & 0.51   & 0.45  & 0.00  & 0.03 \\      
   0    &    0   & 0.04  & 0.96  & 0    \\
0       & 0      & 0     & 0     & 1    \\
0       & 0      & 1     & 0     & 0    
\end{array}
\right]
\]
and the Markov tree is initialized in state 0 with probability 1. It can
be seen from ${\hat{P}}$ that the Markov tree has 
transient states 0 and 1 and an absorbing class 
$\{2; 3; 4\}$, in which the states alternate quasi systematically.

Female cones are potentially present in state 0 only (in state 0, a
shoot has female cones with probability 0.14). 
Male cones are potentially present in state 4 only
(a shoot has male cones with probability 0.66). 
% In fact this is 0.2 in state 0, but we simplify the model
Besides, state 0 is characterized by a high branching intensity (0 to 8
branches) and frequent polycyclism (a shoot is polycyclic with
probability 0.95). 
State 1 is characterized by intermediate branching intensity (0 to 3
branches, unbranched with probability 0.67) 
and monocyclism. State 2 is characterized by intermediate branching
intensity (0 to 4 branches, unbranched with probability 0.81) and rare
polycyclism (a shoot is polycyclic with probability 0.06). States 3 and 4
are always monocyclic, and are mostly unbranched 
(with probability 0.94 and 0.98, respectively). As a consequence, any
unbranched, monocyclic, sterile shoot can be in any of the 5 states
(respectively with probability 0.002, 0.248, 0.281, 0.346 and 0.123).

From a biological point of view, this model highlights a gradient 
of vigour, since the states are ordered with decreasing number of growth
cycles and branches. This also predicts that class $\{2; 3; 4\}$ is
composed by sterile shoots that have potential polycyclism, alternating
with sterile monocyclic shoots, and finally shoots with potential male
sexuality.  
 
In the dataset, shoots with male cones (referred to as
{\it {male shoots}} hereafter) systematically follow sterile
shoots. Moreover, they are either located at the tip of a branch, or
followed by a unique sterile shoot. This is a consequence of a
particular measurement protocol for this dataset, in which individuals
were measured just after the occurrence of the first male cones. 
In contrast, the infinite alternation of two sterile shoots and 
one male shoot predicted by this model cannot be considered as a general
pattern in the pine architecture. A more relevant hypothesis is that
after several years of growth, only unbranched monocyclic sterile shoots
are produced (or maybe a mixture of both such male and sterile shoots).

To analyze how state ambiguity due to unbranched, monocyclic, sterile
shoots affects state restoration, entropy profiles were computed for
each branch. Firstly, the annual shoots were
represented using a colormap, which is a mapping between colours and the
values of conditional entropies $H(S_u|S_{\rho(u)}, \BSX=\bsx)$ 
(see Figure \ref{fig:B3_conditional_entropy_viterbi}a) ). Vertices with lowest
conditional entropy are represented in blue, whereas those with highest
conditional entropy are in red. In a similar way, the
marginal entropy could also be represented using a colormap.

\begin{figure}[!htb]
\begin{center}
\begin{tabular}{cc}
\includegraphics[width=6cm]{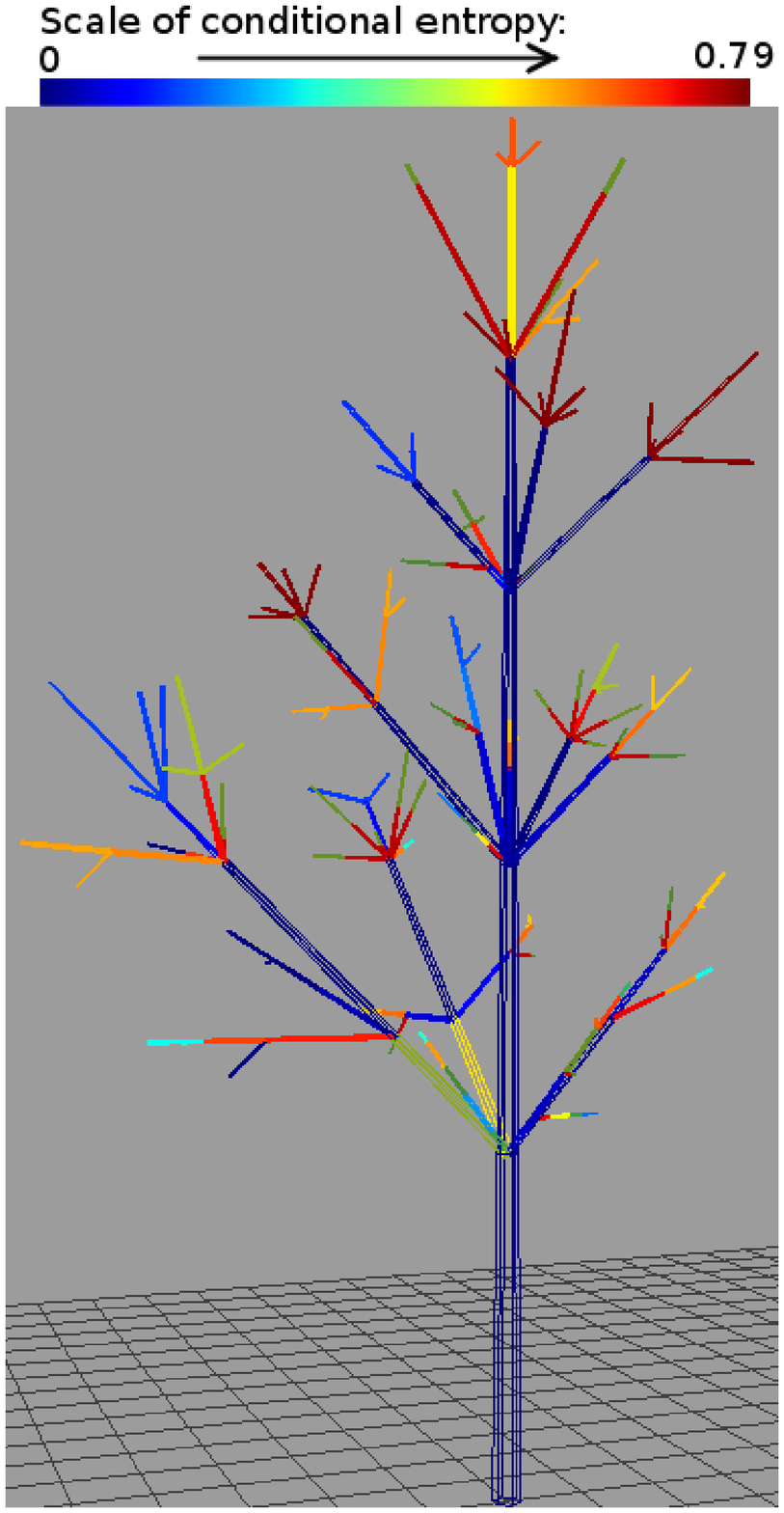}
 & \includegraphics[width=6cm]{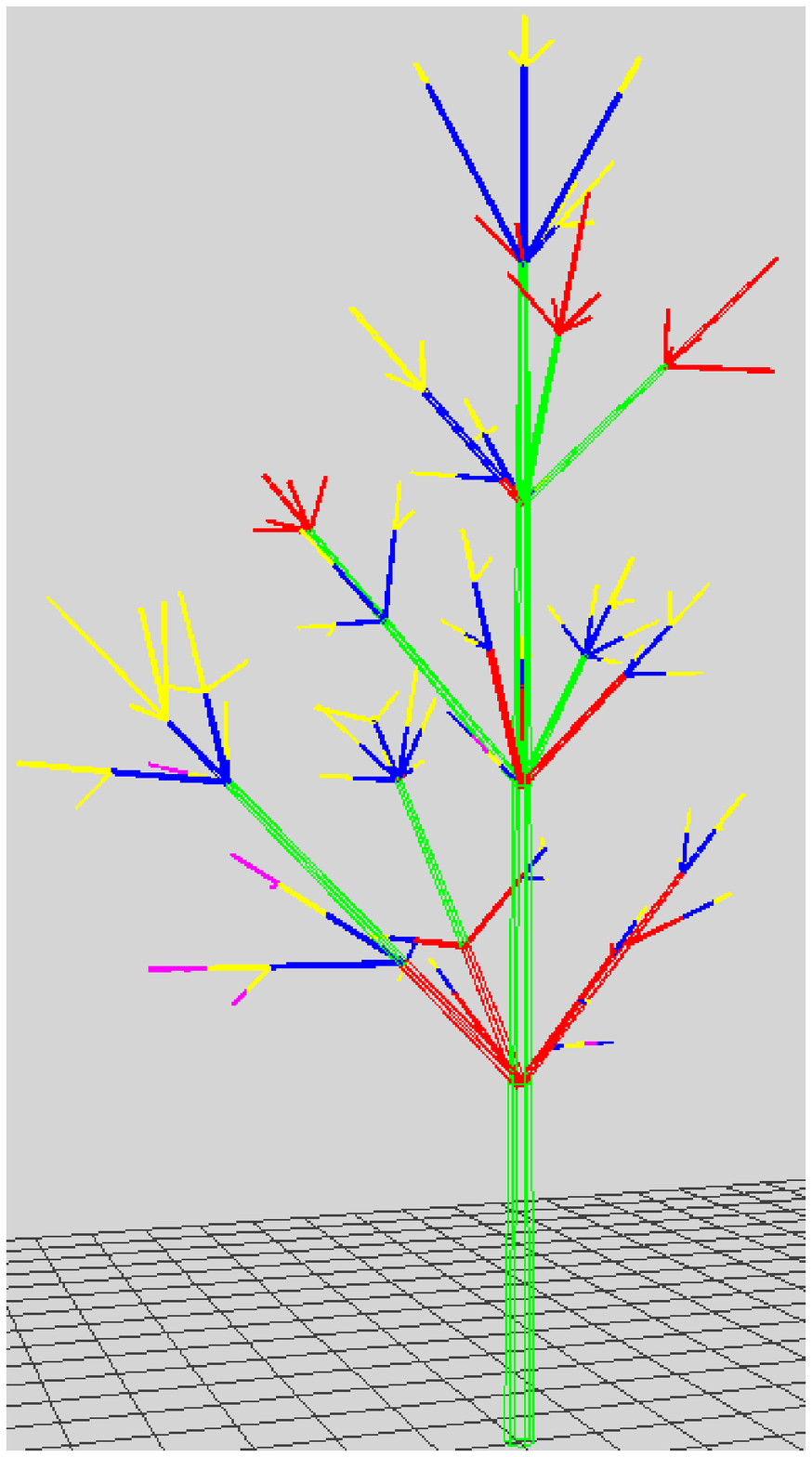} \\
a) & b)
\end{tabular}
\caption[Conditional entropy and state tree restoration]
{\textit{Conditional entropy and state tree restoration for a given
 branch. a) Conditional entropy $H(S_u | S_{\rho(u)}, \BSX = \bsx)$ 
 using a colormap. Blue  corresponds to lowest entropy and red to
 highest entropy. b) State tree restoration. The correspondence  
 between states and colours is as follows: state 0 - green ; state 1 -
 red ; state 2 - blue ; state 3 - yellow ; state 4 - magenta.}}
\label{fig:B3_conditional_entropy_viterbi}
\end{center}
\end{figure}
% Note concerning the colormap: at terminal vertex 182 (MTG vertex 1250), 
% three states are in competition ( 1/0.57361 ; 2/0.399545; 3/0.0252646)
% Conditional entropy given parent state (a.s. 0): 0.788321

The most likely state tree for each individual was computed using the
Viterbi algorithm for HMT models (Durand {\it{et al.}},
2004\nocite{durand2004}). This state tree is represented in Figure 
\ref{fig:B3_conditional_entropy_viterbi}b). This representation shows
where the states are located within the tree; for example state 0 is
located on the main axis (main stem) and at the basis of lateral
axis. Moreover, in conjunction with Figure
\ref{fig:B3_conditional_entropy_viterbi}a), it highlights some states
for which the restoration step is not much ambiguous (in our example,
state 0, and to a least extent, state 4). Thus, these states with low entropy
correspond to vertices with the highest number of branches, female or
male cones. On the contrary, the vertices with highest entropy are
mostly unbranched, monocyclic and sterile, and are located at 
peripheral parts of the plant. 

Using the conditional entropy in Figure
\ref{fig:B3_conditional_entropy_viterbi}a), peripheral vertices with
maximal or minimal conditional entropy can be selected. To further
interpret the model with respect to the data, entropy profiles were
computed along paths leading to these vertices. 
These profiles were complemented by so-called {\it{upward-downward Viterbi
profiles}}. These profiles rely on the following quantities
\[
\max_{(\bss_v)_{v \neq u}} P((S_v = s_v)_{v \neq u}, S_u=j | \BSX=\bsx),
\]
for each state $j$ and each vertex $u$ of the tree. Their computation is
based on upward and downward dynamic programming recursions, similar to
that of Brushe {\it{et al.}} (1998\nocite{brushe1998}), and are not
detailed in this paper. Such profiles provide an overview of local
alternatives to the state tree restoration given by the Viterbi
algorithm. They were used by Gu\'edon (2007\nocite{guedon2007}) as
diagnostic tools for localization of state uncertainty in the context of
hidden (semi-)Markov chains. A detailed analysis of the state
uncertainty is provided by the entropy profiles.
\paragraph{Female shoots} 
To illustrate how entropy reduction and Viterbi profiles are connected,
an example consisting of a path containing a female shoot is
considered. This path corresponds to the main axis of the third
individual (for which $H(\BSS|\BSX=\bsx) = 52.9$). 
The path contains 6 vertices, referred to as $\{0, \ldots,
5\}$. The female shoot is at vertex 2, and vertex 3 is a bicyclic
shoot. Since a female shoot necessarily
is in state 0, $H(S_2 | \BSX = \bsx)=0$ (no uncertainty). 
Since state 0 is quasi systematically preceded by state 0, shoots 0 and 1
are in state 0 with a very high probability and again, 
$H(S_u | \BSX = \bsx) \approx 0$ 
for $u=1,2$. Shoots 3 is bicyclic, and thus is in state 0 with a very
high probability ($H(S_3 | \BSX = \bsx) \approx 0$). 
Shoots 4 and 5, as unbranched, monocyclic, sterile shoots can be in
any state. However, due to several impossible transitions in matrix
${\hat{P}}$, only the following four configurations have non-negligible
probabilities for $(S_4, S_5) : (2, 3), (1, 1), (1, 2)$ and
$(3, 4)$. This is partly highlighted in Figure \ref{fig:B2-5snl-772v} c)
by the Viterbi profile, and results into high mutual information 
between $S_4$ and $S_5$ given $\BSX = \bsx$.
For example, $P(S_5=3 | S_4=2, \BSX = \bsx)$, 
$P(S_5=4 | S_4=3, \BSX = \bsx)$ and
$P(S_5 \in \{1, 2\} | S_4=1, \BSX = \bsx)$ are very close to 1.
Thus the downward conditional entropy 
$H(S_5 | \BBSS_{0 \backslash 5}, \BBSX_0 = \bbsx_0) 
= H(S_5 | S_4, \BBSX_0 = \bbsx_0) = 0.1$, whereas 
$H(S_5 | \BBSX_0 = \bbsx_0)=0.8$. Similarly, 
the upward conditional entropy 
$H(S_4 | \BBSS_{c(4)}, \BBSX_0 = \bbsx_0) = H(S_4 | S_5, \BBSX_0 = \bbsx_0)$ 
is $0.5$ whereas $H(S_4 | S_5, \BBSX_0 = \bbsx_0)=1.1$
-- see both entropy profiles in Figure \ref{fig:B2-5snl-772v} a) and b).
Since there practically is no uncertainty on the value of $S_3$, 
the mutual information between $S_3$ and $S_4$ given $\BSX = \bsx$ is
very low. 

\begin{figure}[!htb]
\begin{center}
\begin{tabular}{cc}
\hspace{-2cm}
\includegraphics[width=7cm]{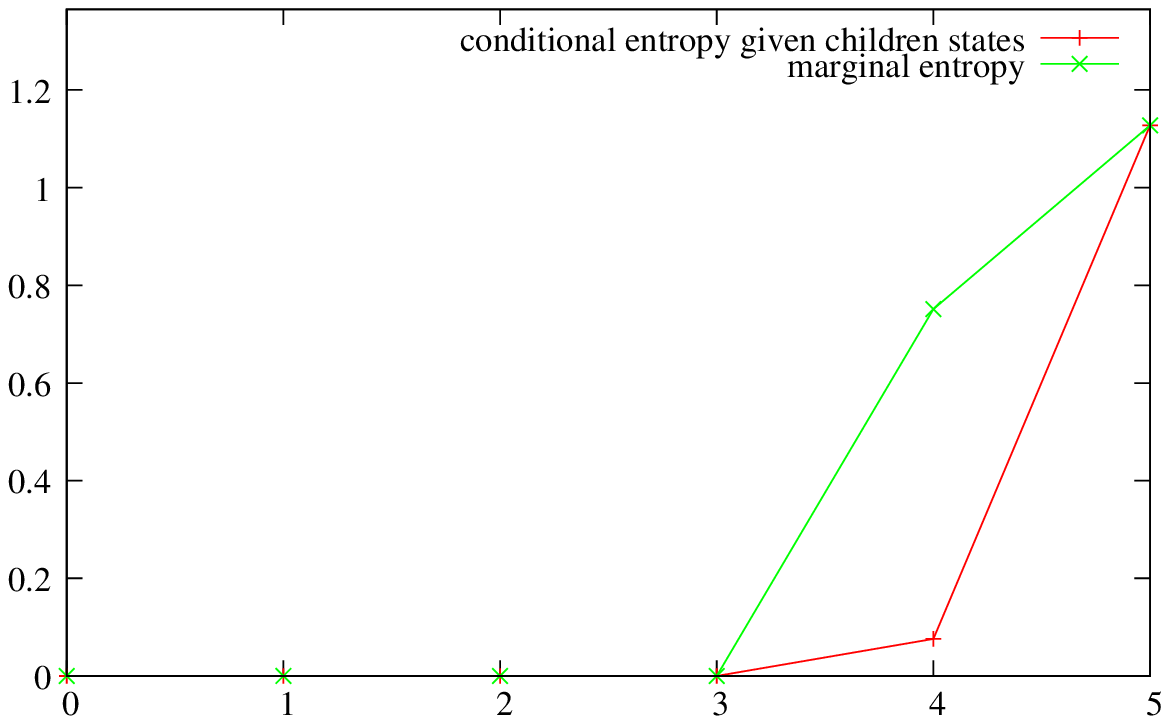} &
\includegraphics[width=7cm]{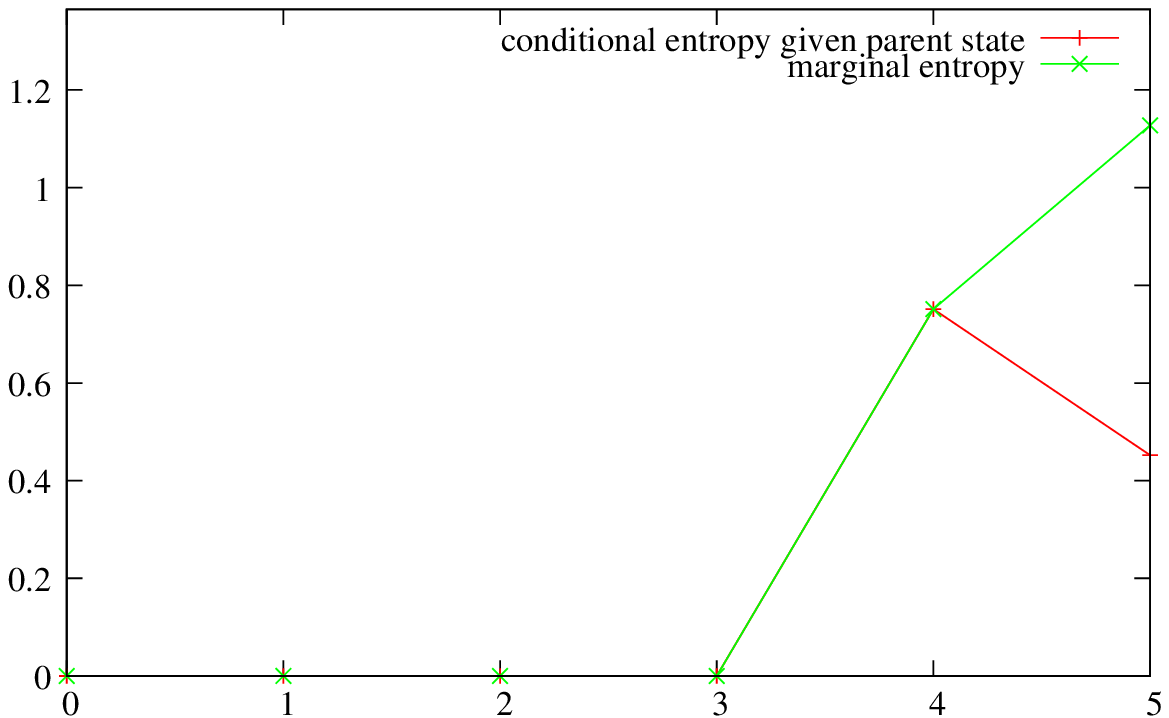} \\
a)  & b)
% vertex 772
\end{tabular}
\begin{tabular}{c}
\includegraphics[width=7cm]{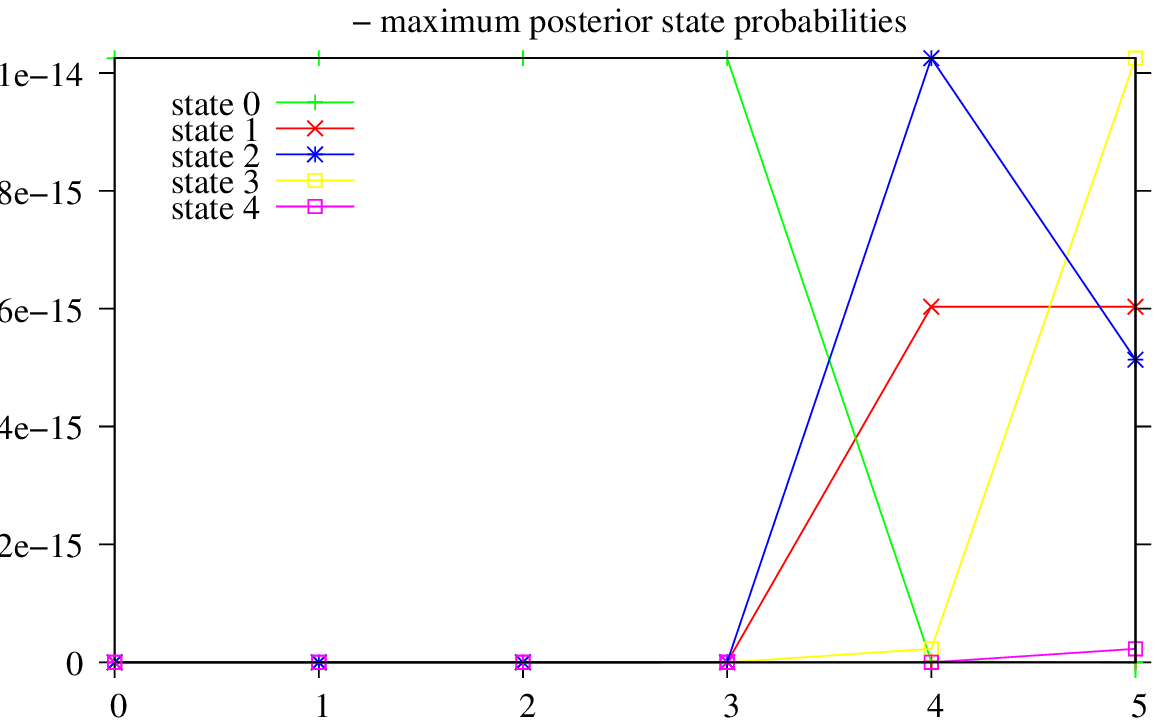} \\
c)
\end{tabular}
\end{center}
\caption[Entropy profiles: path containing a female shoot]
{\textit{Entropy profiles along a path containing a female shoot,
 obtained with a 5-state HMT model without the ``length'' variable.
a) Marginal and conditional entropy given children states.
b) Marginal and conditional entropy given parent state.
c) State tree restoration with the Viterbi upward-downward
 algorithm.}}
\label{fig:B2-5snl-772v}
\end{figure}

Using equation \eqref{eq:partial_upwd_entropy}, the contribution 
of the vertices of the considered path $\MP$ to the global state tree
entropy can be computed as:
\begin{eqnarray}
H(S_0 |\BSX = \bsx)
+ \sum_{u \in \MP \atop u \neq 0 } H(S_u | S_{\rho(u)}, \BSX = \bsx),
\end{eqnarray}
and is equal to 1.24 in the above example (that is, 0.21 per vertex
on average). 
The global state tree entropy for this individual is 0.37 per
vertex, against 0.38 per vertex in the whole dataset.

The contribution of $\MP$ to the global state tree entropy
corresponds to the sum of the heights of every point 
of the profile of entropy given
parent state in Figure \ref{fig:B2-5snl-772v}b). The mean marginal state
entropy for this individual is 0.44 per vertex, which strongly
overestimates the mean state tree entropy.

\subsubsection{Entropy profiles in the 6-state HMT model without length
variable} 
\label{subsubsec:application-6nolength}
To assess the ability of the 5-state and the 6-state HMT models to provide
state restorations with low uncertainty and relevant interpretation 
of the results, both models are compared using entropy and Viterbi
profiles. 

The estimated transition matrix of the 6-state HMT model without the
``length'' variable is 
\[
{\hat{P}} = 
\left[
\begin{array}{llllll}
0.16    & 0.56   & 0.16  & 0.12  & 0     & 0    \\
0       & 0.42   & 0.01  & 0.55  & 0     & 0.02 \\
0       & 0      & 0.02  & 0.62  & 0.36  & 0    \\      
0       & 0      & 0     & 0.10  & 0.90  & 0    \\
0       & 0      & 0.01  & 0     & 0     & 0.99 \\
0       & 0      & 0     & 1     & 0     & 0   
\end{array}
\right]
\]
and the Markov tree is initialized in state 0 with probability 1. It can
be seen from ${\hat{P}}$ that the Markov tree has
transient states 0 and 1 and an absorbing class 
$\{2; 3; 4; 5\}$. Any return from state 3, 4 or 5 to state 2 is actually 
rare, and states 3 to 5 alternate most of the time.

Female cones are potentially present in state 0 only (a shoot has female
cones with probability 0.22 in state 0).
Male cones are potentially present in state 5 only (a shoot has male
cones with probability 0.62).
% In fact this is 3% in state 0, but we simplify the model
Besides, state 0 is characterized by a high branching intensity (0 to 8
branches) and frequent polycyclism (a shoot is polycyclic with
probability 0.92). 
State 1 is characterized by low branching intensity (0 to 2
branches, unbranched with probability 0.56) 
and monocyclism. State 2 is characterized by intermediate branching
intensity (0 to 6 branches, unbranched with probability 0.21) and 
bicyclism (a shoot is bicyclic with probability 0.99). States 3 to 5 are
always monocyclic, and are mostly unbranched 
(with probability 0.87, 0.94 and 0.98, respectively). As a consequence,
any unbranched, monocyclic, sterile shoot can be in any of the states 0,
1, 3, 4 and 5 (respectively with probability 0.003, 0.205, 0.316,
0.342 and 0.134).
States 1, 3 and 4 have rather similar characteristics, although they
slightly differ by their branching densities. These states are
essentially justified by their 
particular positions in the plant. The role of state 4 is mainly to
represent the state-transition pattern 345, composed by two sterile and
one male shoot. In usual Markovian modelling, a binary pattern 001 for the
``male cone'' variable could be
modeled by a second-order markov model, or by a semi-Markov model with
Bernoulli sojourn time in value 0. Here, since a first-order Markov tree
is considered, state 4 may be thought of as an additional necessary state to
represent this pattern.

From a biological point of view, the approximate reduction of the number
of growth cycles and branches along the states is relevant. However, 
an absorbing class where two sterile shoots and one male shoot tend 
to indefinitely alternate does not seem justified.

The global state entropy on the whole dataset is 0.36 per vertex on
average. This quantity is slightly less than that of the 5-state HMT
model. However, the entropy can increase locally on some particular
paths. 

\paragraph{Female shoots} 
An example consisting in the same branch and path than in
Section \ref{subsubsec:application-5nolength} is considered (branch with a
female shoot). Let us recall that the female shoot is at vertex 2, and
vertex 3 is a bicyclic shoot.
As in the case of a 5-state model, there is not
much uncertainty on the state values at vertices 0 to 3. 
Only three configurations have non-negligible probabilities for 
$(S_4, S_5): (3, 4), (4, 5)$ and $(3, 3)$. The last two configurations
are at most 4 times less likely than the most likely configuration. As a
consequence, the number and probabilities of the suboptimal state trees
is lower for the 6-state model than for the 5-state model 
(see Figures \ref{fig:B2-5snl-772v} c) and \ref{fig:B2-6snl-772v} c)), 
and the values in the downward entropy profile are also lower 
(see Figures \ref{fig:B2-5snl-772v} b) and \ref{fig:B2-6snl-772v} b)).

\begin{figure}[!htb]
\begin{center}
\begin{tabular}{cc}
%\hspace{-2cm}
\includegraphics[width=7cm]{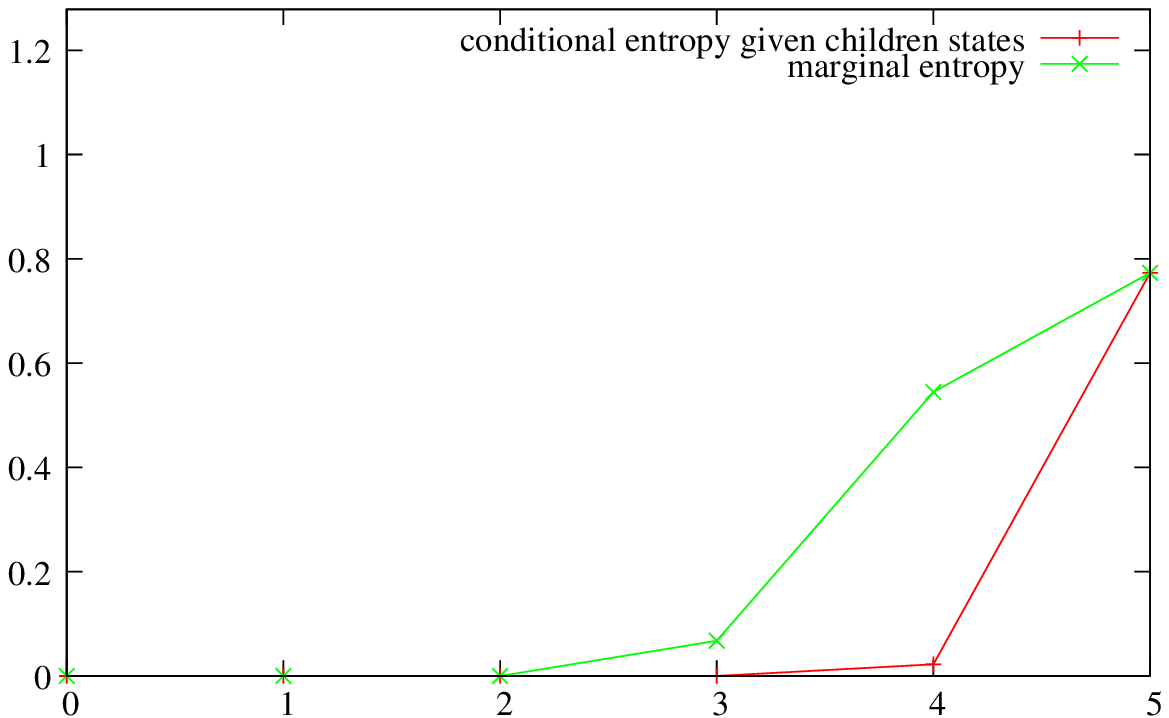} &
\includegraphics[width=7cm]{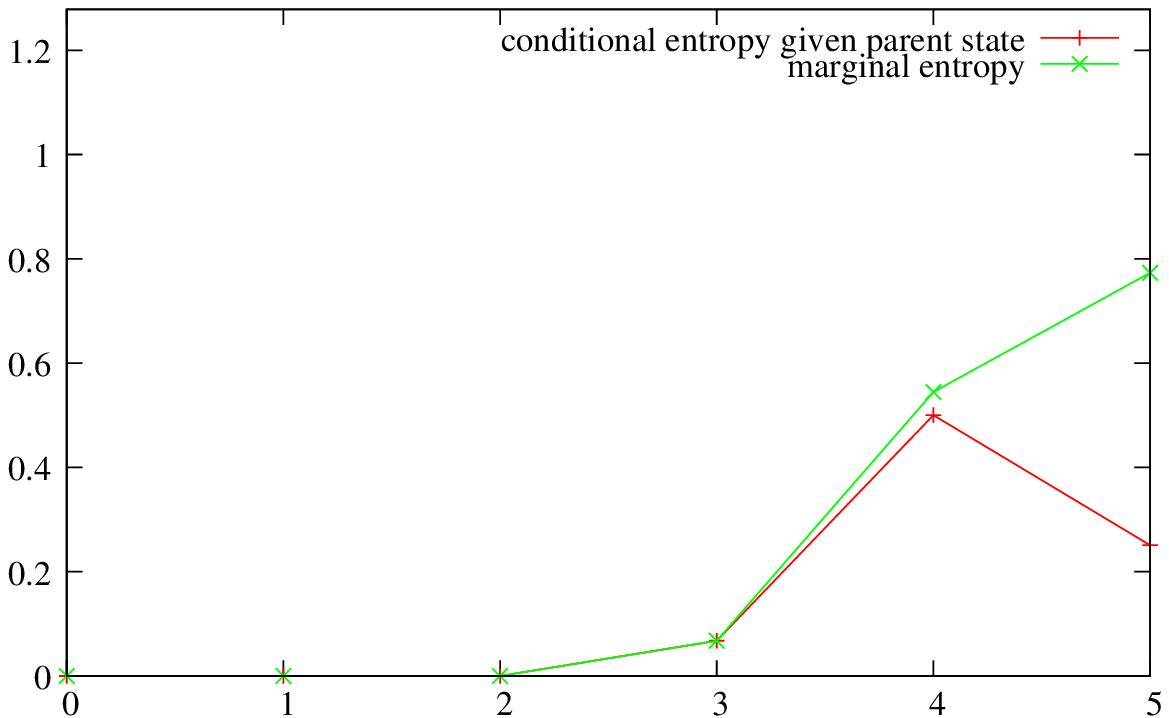} \\
a)  & b) 
% vertex 772
\end{tabular}
\begin{tabular}{c}
\includegraphics[width=7cm]{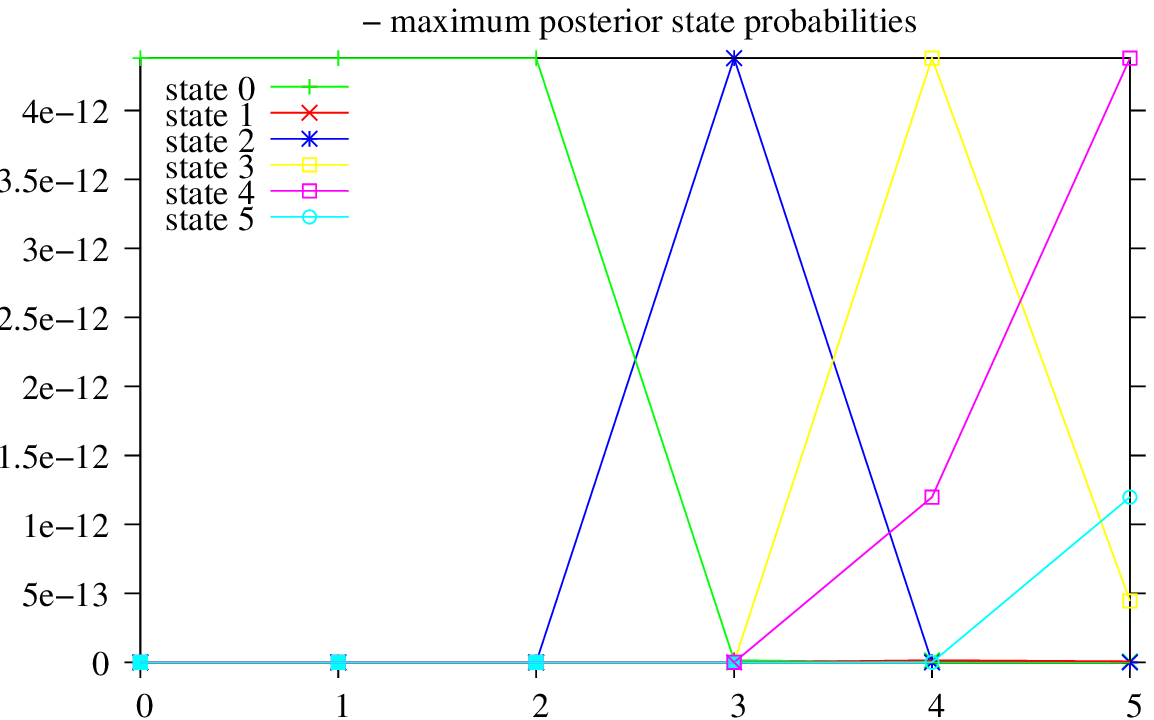} \\
c)
\end{tabular}
\end{center}
\caption[Entropy profiles: path containing a female shoot and 6-state HMT]
{\textit{Entropy profiles along a path containing a female shoot,
 obtained with a 6-state HMT model without the ``length'' variable. 
a) Marginal and conditional entropy given children states.
b) Marginal and conditional entropy given parent state.
c) State tree restoration with the Viterbi upward-downward
 algorithm.}}
\label{fig:B2-6snl-772v}
\end{figure}

The global state tree entropy for this individual is 0.35 per vertex, and the
contribution of the considered path to the global state entropy is 0.17
per vertex, which is lower than for a 5-state model ({\it{i.e.}} 0.21
per vertex).

\subsubsection{Entropy profiles in the 6-state HMT model with length variable }
\label{subsubsec:application-length}
The estimated transition matrix of the 6-state HMT model with the ``length''
variable is
\[
{\hat P} = 
\left[
\begin{array}{llllll}
0.17    & 0.14   & 0.44  & 0.01  & 0     & 0.24 \\
0       & 0.18   & 0.18  & 0     & 0     & 0.64 \\
0       & 0.07   & 0.03  & 0.90  & 0     & 0 \\      
0       & 0.07   & 0.03  & 0     & 0.76  & 0.14 \\
0       & 0      & 0     & 0     & 0     & 1 \\
0       & 0      & 0     & 0     & 0     & 1
\end{array}
\right].
\]
The hidden states and the state transitions, represented
% diagram and state {\it{signatures}}
% (symbolic representation of the typical values in each state) are given
in Figure \ref{fig:hmt_diagram}, have the following interpretation.
The Markov tree is initialized in state 0 with probability 1. It can be seen
from ${\hat P}$ that the Markov tree has transient states 0, transient
class \{1, 2, 3\}, and two absorbing states 4 and 5. The only possible
transitions to a previously-visited state are $2 \rightarrow 1$, 
$3 \rightarrow 1$ and $3 \rightarrow 2$.

The states are ordered by decreasing length, except for state 5, which
has slightly longer shoots than state 4.
Female cones are potentially present in state 0 only (a shoot has female
cones with probability 0.13 in state 0).
Male cones are potentially present in state 4 essentially, and any shoot
in state 4 has male cones with probability 1. Male cones may also
be present in states 0 and 5 (with probability 0.02 and 0.03, respectively).
Besides, state 0 is characterized by a high branching intensity (0 to 8
branches) and frequent polycyclism (a shoot is polycyclic
with probability 0.89). 
State 1 is characterized by intermediate branching intensity (1 to 3
branches, never unbranched) and monocyclism with rare bicyclism (a shoot
is monocyclic with probability 0.96).
State 2 is characterized by low branching
intensity (0 to 3 branches, unbranched with probability 0.74) and 
monocyclism with rare bicyclism (a shoot is monocyclic with probability
0.9). States 3 to 5 are always monocyclic, and are mostly unbranched 
(with probability 0.94 and 0.98, respectively). As a consequence, any
unbranched, monocyclic, sterile shoot can be in any of the states 0, 2,
3 and 5 (respectively with probability 0.001, 0.261, 0.367 and
0.371). This characteristic of the model will be shown to be the source
of state uncertainty for such shoots. States 3 and 5 differ mostly by their shoot length
distributions. 

\begin{figure}[!htb]
\begin{center}
\includegraphics[width=12cm]{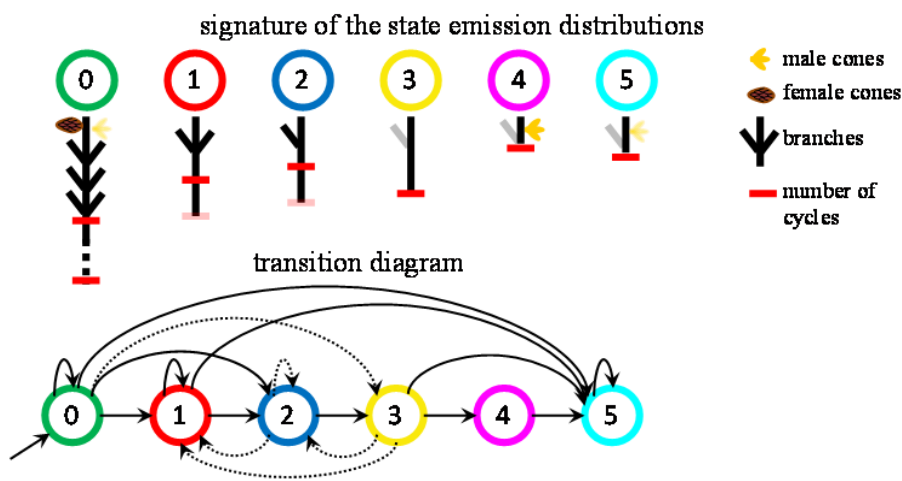}
\caption[6-state HMT model: transition diagram and observation distributions]
{\textit{6-state HMT model: transition diagram and symbolic
 representation of the state 
 signatures (conditional mean values of the variables given the states,
 depicted by typical shoots). Dotted arrows correspond to transitions
 with associated probability $< 0.1$. Mean shoot lengths given each state are
 proportional to segment lengths, except for state 0 (which mean length
 is slightly more than twice the mean length for state 1).}}
\label{fig:hmt_diagram}
\end{center}
\end{figure}

From a biological point of view, this model highlights a gradient 
of vigor, since the states are ordered by decreasing length, and also
roughly by decreasing number of growth cycles and branches. 
The existence of an absorbing class corresponding to unbranched,
monocyclic, shoots of short length (either male or sterile) predicted by
this estimated HMT model is more consistent with biological {\it{a
priori}} knowledge on Aleppo pine architecture,
than the models in Sections \ref{subsubsec:application-5nolength}
and \ref{subsubsec:application-6nolength}.

A detailed analysis of state uncertainty has been performed on three paths
(extracted from two distinct individuals), chosen for the contrasted
situations they yield:

\paragraph{Case 1) Female shoots}
Firstly, the same path containing a female shoot as in Sections
\ref{subsubsec:application-5nolength} 
and \ref{subsubsec:application-6nolength} is considered. Let us recall
that the female shoot is at  
vertex 2, and vertex 3 is a bicyclic 
shoot. Since a female shoot necessarily
is in state 0, $H(S_2 | \BSX = \bsx)=0$ (no uncertainty). 
Since state 0 is systematically preceded by state 0, shoots 0 and 1
are in state 0 with probability one and again, 
$H(S_u | \BSX = \bsx) = 0$ 
for $u=0,1$. Shoot 3 is bicyclic, and thus is in state 0 with a very
high probability ($H(S_3 | \BSX = \bsx) \approx 0$). 
Shoots 4 and 5, as unbranched, monocyclic, sterile shoots can be in
any state, except states 1 and 4. However, due to several impossible
transitions in matrix ${\hat P}$, and given the lengths of these
shoots, only the following three
configurations have non-negligible probabilities for $(S_4, S_5):$ 
$(5, 5), (2, 3)$ and $(3, 5)$. This is partly highlighted in Figure 
\ref{fig:B2-6l-772v}c) by the Viterbi profile. As a consequence, $S_5$
can be deduced from $S_4$, which results into high mutual information
between $S_4$ and $S_5$ given $\BSX = \bsx$.
Thus the conditional entropy 
$H(S_5 | \BBSS_{0 \backslash 5}, \BSX = \bsx) 
= H(S_5 | S_4, \BSX = \bsx)$ is $0.02$, whereas  
$H(S_5 | \BSX = \bsx)=0.46$. Similarly, the conditional
entropy  $H(S_4 | \BBSS_{c(4)}, \BSX = \bsx)$ = 
$H(S_4 | S_5, \BSX = \bsx)$ is $0.02$, whereas 
$H(S_4 | \BSX = \bsx)=0.46$, as illustrated by both entropy
profiles in Figure \ref{fig:B2-6l-772v}a) and b). 
Since there practically is no uncertainty on the value of $S_3$, 
the mutual information between $S_3$ and $S_4$ given $\BSX=\bsx$ is very
low. 
\begin{figure}[!htb]
\begin{center}
\begin{tabular}{cc}
\hspace{-2cm}
\includegraphics[width=7cm]{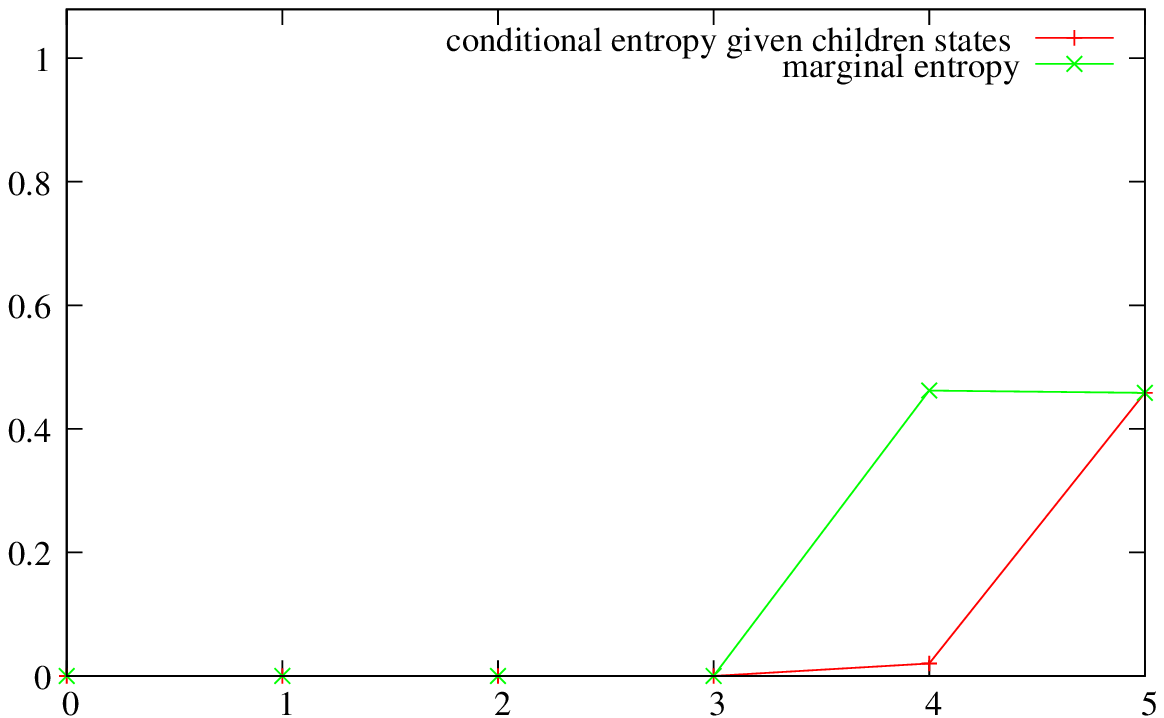} &
\includegraphics[width=7cm]{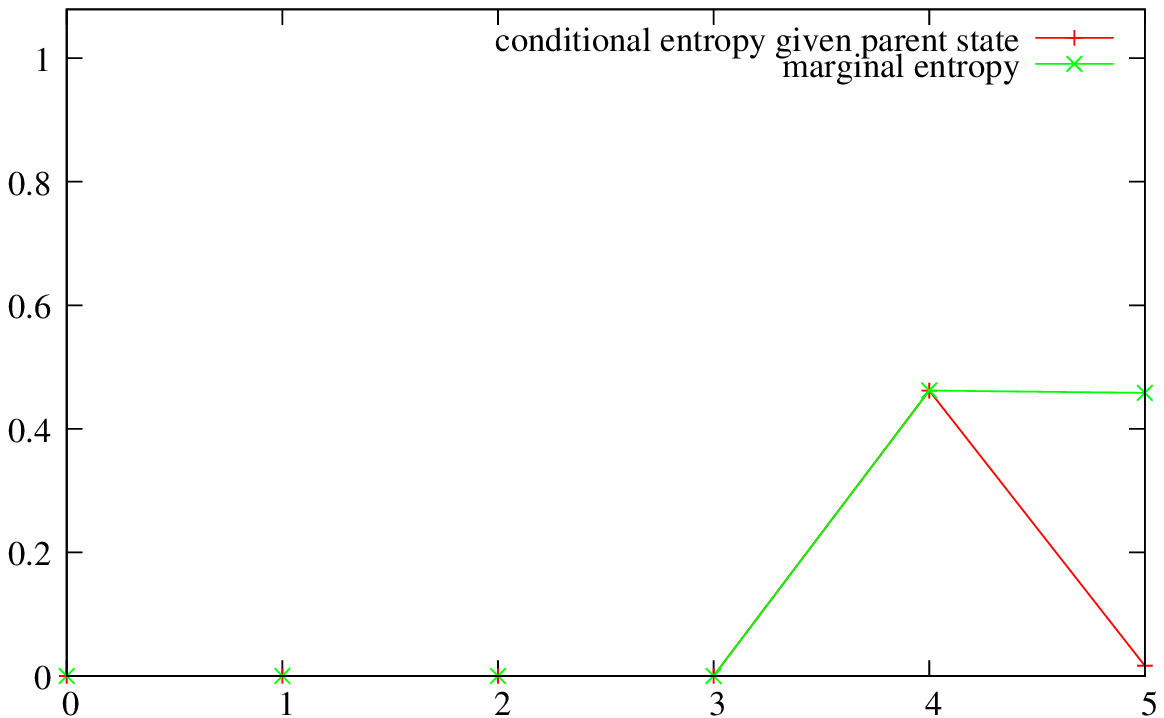}\\
a) & b) 
\end{tabular}
\begin{tabular}{c}
\includegraphics[width=7cm]{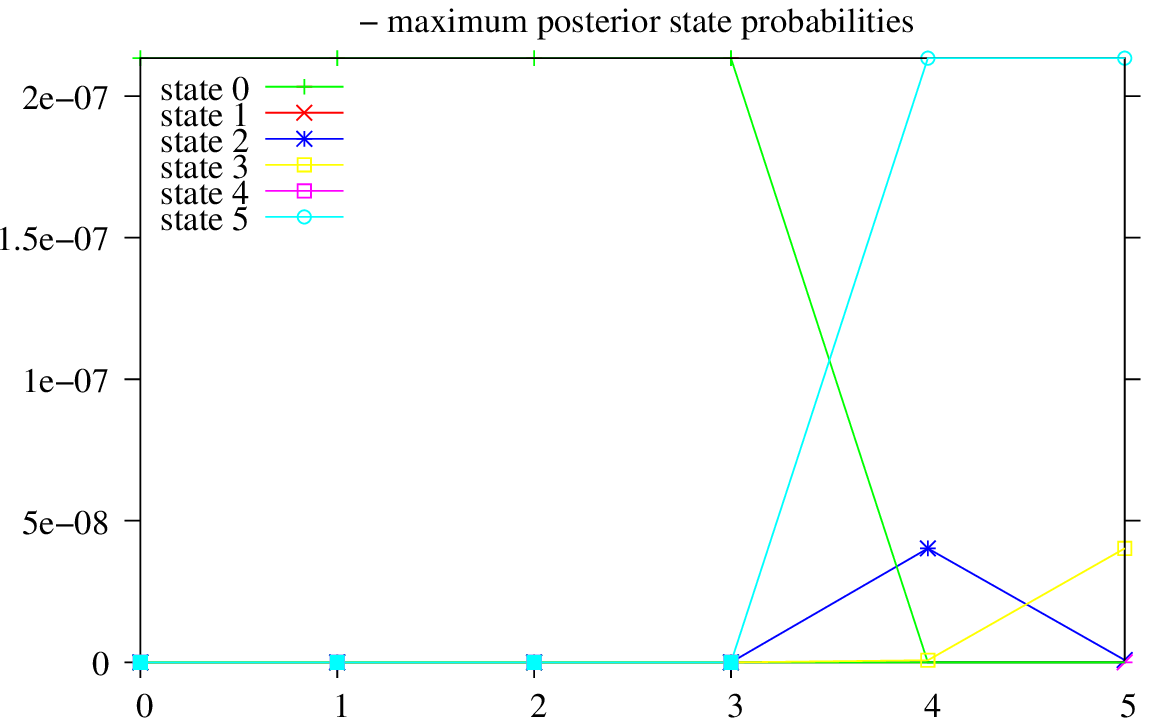} \\
c)
% vertex 772
\end{tabular}
\end{center}
\caption[Entropy profiles: path containing a female shoot]
{\textit{Entropy profiles along a path containing a female shoot. 
a) Marginal and conditional entropy given children states.
b) Marginal and conditional entropy given parent state.
c) State tree restoration with the Viterbi upward-downward
 algorithm.}}
\label{fig:B2-6l-772v}
\end{figure}

The contribution of the vertices of the considered path $\MP$ to the
global state tree entropy is equal to 0.48 in the above example (that
is, 0.08 per vertex on average), which is far less than for both models
without the ``length'' variable. The global state tree entropy for this
individual is 0.21 per vertex, against 0.20 per vertex in the whole
dataset. This illustrates that incorporating the length variable into
the HMT model strongly reduces uncertainty on the state trees.
The mean marginal state
entropy for this individual is 0.37 per vertex, which strongly
overestimates the mean state tree entropy.

\paragraph{Case 2) Sterile shoots} 
Then, focus is put on a path essentially composed by
monocyclic, sterile shoots in the fourth individual (for which
$H(\BSS|\BSX=\bsx) = 47.5$).
The path contains 5 vertices, referred to as 
$\{0, \ldots, 4\}$. 
Shoots 0 and 1 are long and highly branched, and thus are in state
0 with probability $\approx 1$ (also, shoot 0 is bicyclic).  
Shoots 2 to 4 are monocyclic and sterile. Shoots 2 and 3 bear one
branch, and can be in states 1 or 2 essentially. Shoot 4 is unbranched
and from the Viterbi profile in Figure \ref{fig:B3-6l-963v}c), it can
be in states 2, 3 or 5. 
This is summarized by the entropy profile in Figure 
\ref{fig:B3-6l-963v}b).
Since there is no uncertainty on $S_1$, $H(S_2 | S_1, \BSX_0 = \bsx_0)
= H(S_2 | S_1, \BSX_0 = \bsx_0)$, as shown in Figure \ref{fig:B3-6l-963v} b).
Moreover, from the Viterbi profile, only the following three
configurations for $(S_2, S_3)$ have 
non-negligible probabilities: $(2, 1), (1, 1)$ and
$(2, 2)$, and $S_2=2$ has highest probability. Since $S_3$ cannot be
deduced from $S_2=2$, $H(S_3 | S_2, \BSX_0 = \bsx_0)$ is rather high. 
Similarly, only the following three configurations for 
$(S_3, S_4)$ have non-negligible probabilities: $(1, 5), (1, 2)$ and
$(2, 3)$ and $S_3=2$ has low probability, so that 
$H(S_4 | S_3, \BSX_0 = \bsx_0)$ is rather high.

The profile $H(S_u | \BSS_{c(u)}, \BSX=\bsx)$ 
in Figure \ref{fig:B3-6l-963v} a)
is interpreted as follows: the marginal entropy of $S_2$ is high (0.61),
and $S_2$ cannot be deduced from $S_3$. However, $S_2$ can be deduced
from a brother $S'_3$ of $S_3$, such as $S'_3=3$ implies $S_2=2$ and 
$S'_3=5$ implies $S_2=1$ (as would be shown by entropy profiles
including $S'_3$). Hence, $H(S_2 | \BSS_{c(2)}, \BSX=\bsx)$ is
low. This results into high mutual information between $S_2$ and its
children states given $\BSX=\bsx$, as illustrated in the profile
Figure~\ref{fig:B3-6l-963v} d). 

The contribution of this path to the global state tree entropy is 1.41
(that is, 0.28 per vertex on average), which is higher than the
contribution of the path containing a female cone considered
hereabove. This is also higher than the mean contribution in the whole
branch (that is, 0.24 per vertex). This is explained by the lack of
information brought by the observed variables (several  
successive sterile monocyclic shoots, which can be in states 1, 2, 3 or
5). The mean marginal state entropy for this individual is
0.37 per vertex, which strongly overestimates the mean state tree
entropy. Note that the representation of state uncertainty using
profiles of smoothed probabilities induces a perception of global
uncertainty on the states along $\MP$ equivalent to that provided by
marginal entropy profiles. The discrepancy between the profile of
partial state entropies along $\MP$ and the profile of cumulative
marginal entropies is highlighted in Figure~\ref{fig:B3-6l-963v}e).
\begin{figure}[!htb]
\begin{center}
\begin{tabular}{cc}
%\hspace{-2cm}
\includegraphics[width=7cm]{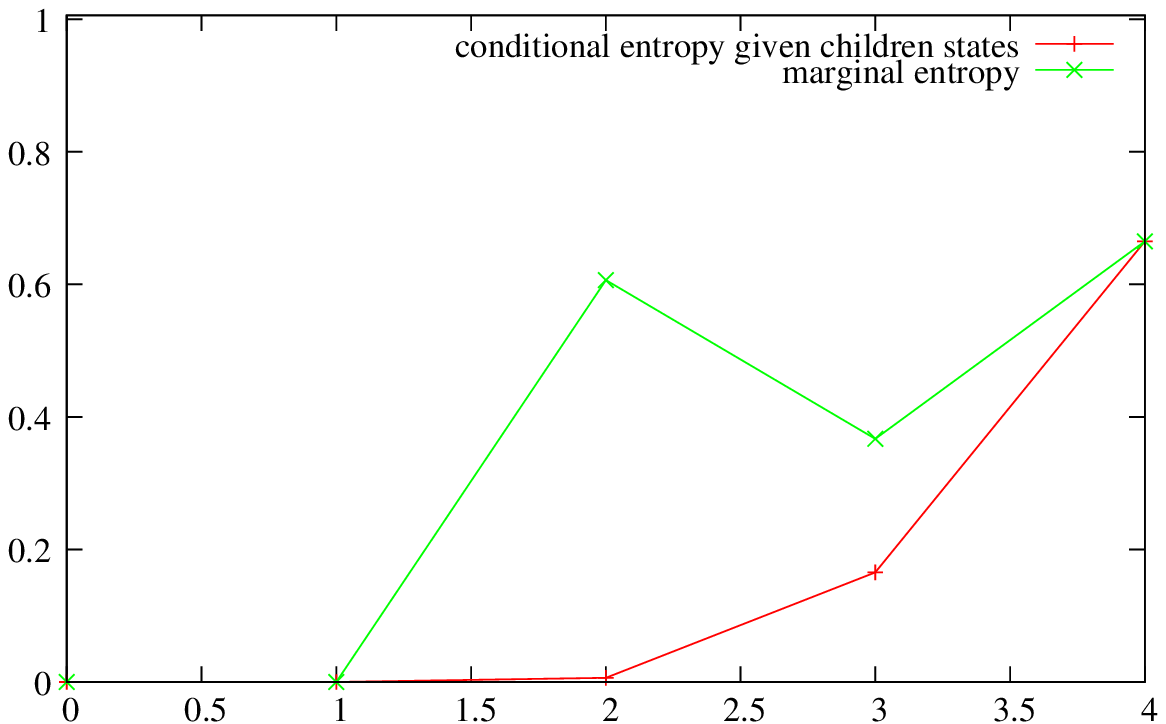} &
\includegraphics[width=7cm]{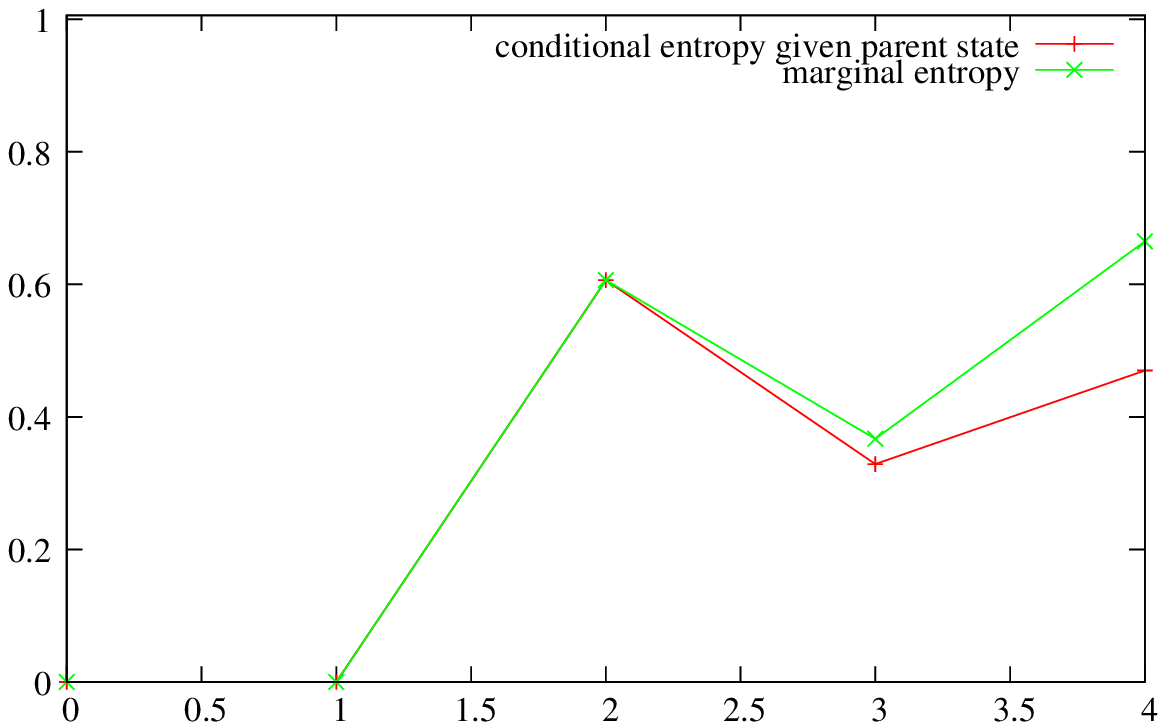}\\
a) & b) 
\end{tabular}
\begin{tabular}{cc}
\includegraphics[width=7cm]{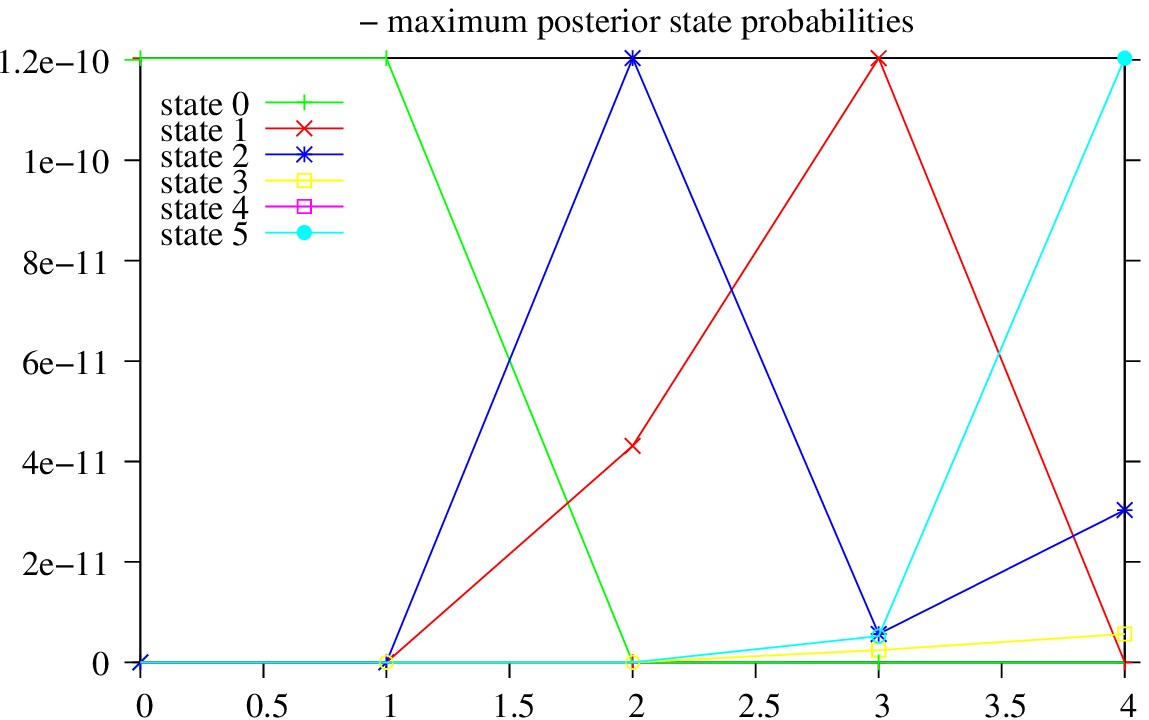} &
\includegraphics[width=6.5cm]{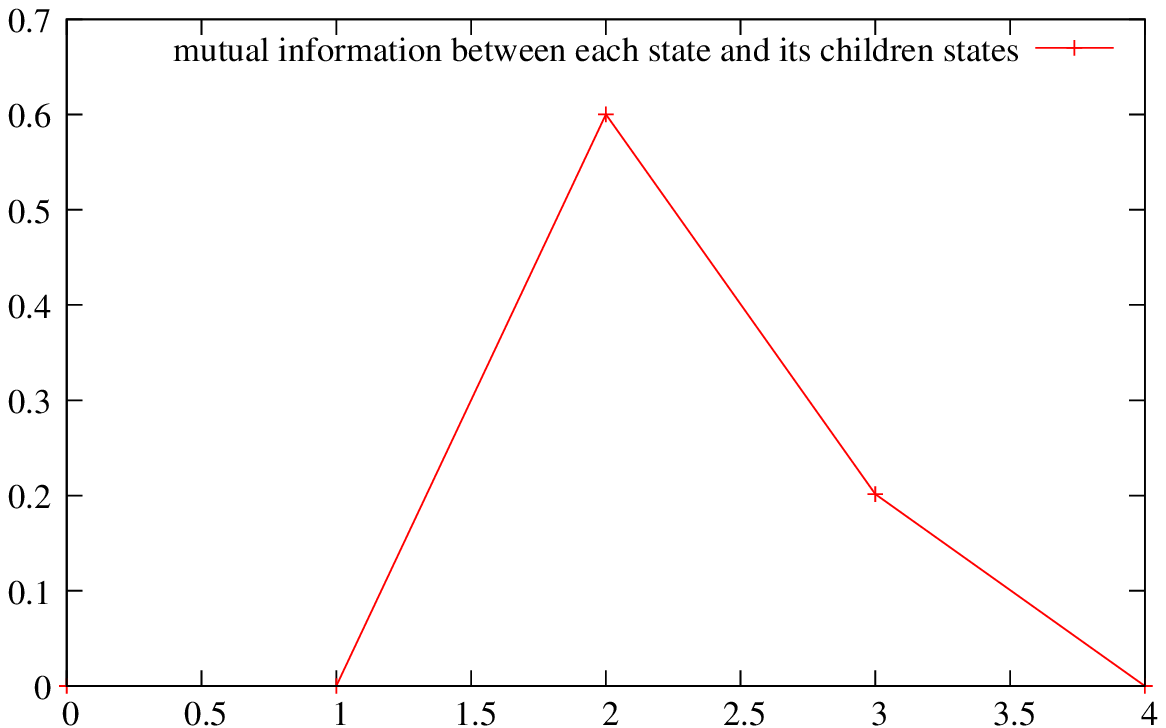} \\
c) & d)
% vertex 963
\end{tabular}
\begin{tabular}{c}
\includegraphics[width=6cm]{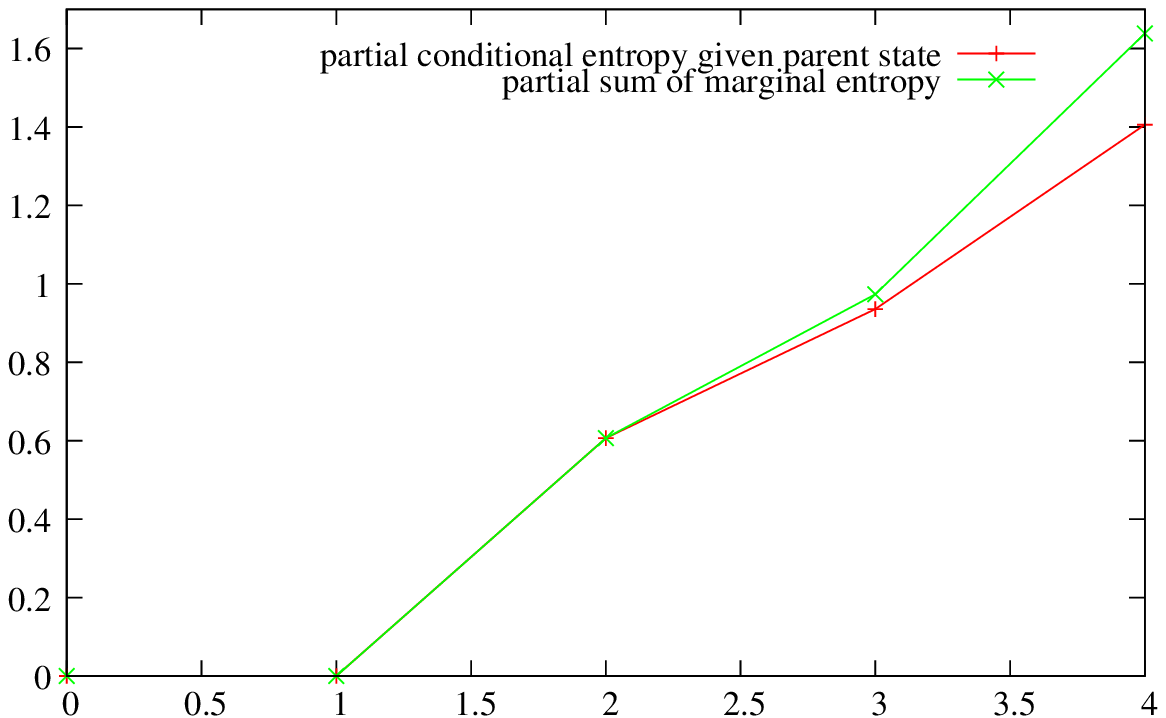} \\
e)
% vertex 963
\end{tabular}
\end{center}
\caption[Entropy profiles: path containing mainly sterile monocyclic
 shoots]
{\textit{Entropy profiles along a path containing mainly sterile
 monocyclic shoots. 
a) Marginal and conditional entropy given children states.
b) Marginal and conditional entropy given parent state.
c) State tree restoration with the Viterbi upward-downward
 algorithm. d) Mutual information between a state and its children
 states. e) Profiles of partial state sequence and of cumulative marginal
 entropies.}}
\label{fig:B3-6l-963v}
\end{figure}

\paragraph{Case 3) Male shoots} 
Finally, a path with a terminal male shoot included in the fourth
individual is analyzed. The path
contains 5 vertices, referred to as $\{0, \ldots, 4\}$. 
Shoots 0 and 1 are long and highly branched, and thus must be in state
0 (also, shoot 0 is bicyclic). Thus, $H(S_u | \BSX = \bsx)=0$ for
$u=0,1$. Shoot 2 is long and unbranched, and thus must be in state
2. Shoot 3 bears one branch, and can be in states 1 or 2 essentially
(since $S_1=2$ and ${\hat P}_{2,2}$ is low). 
As a male shoot, shoot 4 is in state 4 with a very high probability, or
in state 5 otherwise and $H(S_4 | \BSX = \bsx) = 0.08$. Moreover,
$S_3=0$ if and only if $S_4=4$, thus $H(S_4 | S_3, \BSX = \bsx) = 0
= H(S_3 | S_4, \BSX = \bsx)$.

Finally, the contribution of this path to the total entropy is 0.09 
({\it{i.e.}} $0.02$ per vertex on average), which is negligible. This
result is typical of male shoots, which mainly are in state 4, and since
state 4 can only be accessed to from state 3.
\begin{figure}[!htb]
\begin{center}
\begin{tabular}{cc}
\hspace{-2cm}
\includegraphics[width=7cm]{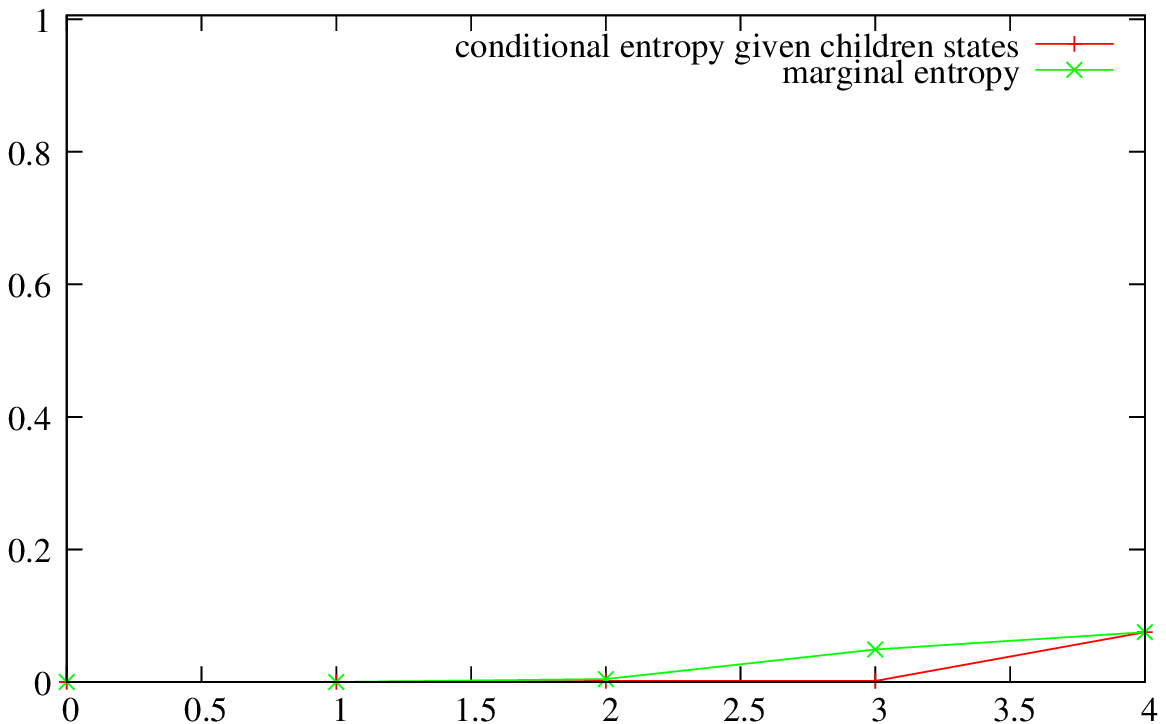} &
\includegraphics[width=7cm]{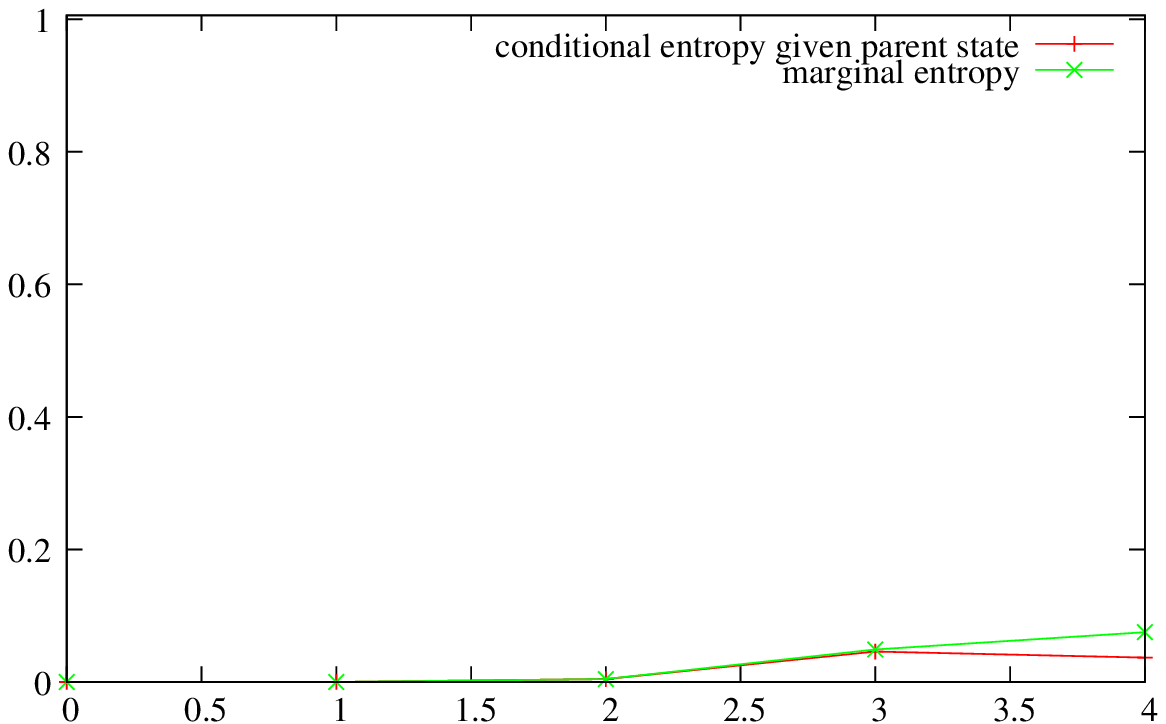}\\
a) & b) 
\end{tabular}
\begin{tabular}{c}
\includegraphics[width=7cm]{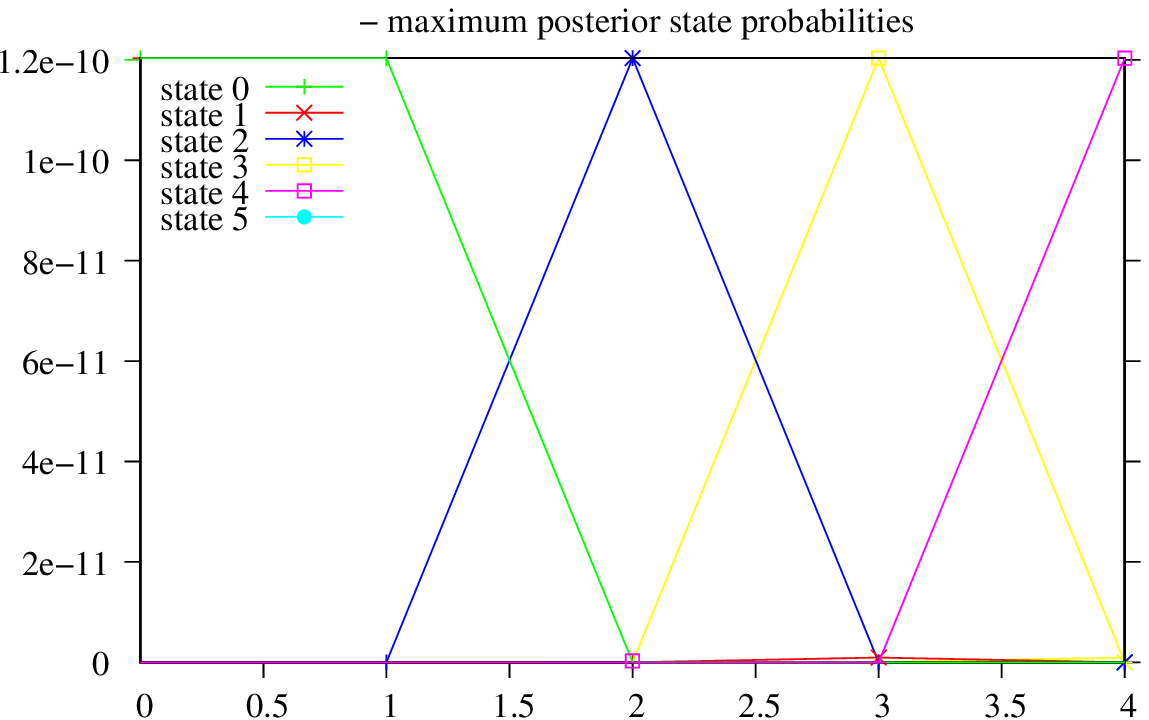} \\
c)
% vertex 946
\end{tabular}
\caption[Entropy profiles: path path with a terminal male shoot]
{\textit{Entropy profiles along a path containing path with a terminal male shoot. 
a) Marginal and conditional entropy given children states.
b) Marginal and conditional entropy given parent state.
c) State tree restoration with the Viterbi upward-downard
 algorithm.}}
\label{fig:B3-6l-946v}
\end{center}
\end{figure}

\subsubsection{Comparison between entropy profiles conditioned on parent or
  children states}
As discussed in Section \ref{sec:profiles_hmt}, the following inequality
is satisfied, regarding entropy profiles:
\begin{align*}
G(\Tree) = 
\sum_{u \in \Tree} H(S_u | S_{\rho(u)}, \BSX = \bsx) 
\leq M(\Tree) =  \sum_{u \in \Tree} H(S_u | \BSX = \bsx),
\end{align*}
that is, the global state tree entropy is bounded from above by the
sum of marginal entropies.

Let $C(\Tree)$ be defined as 
\[
C(\Tree) = 
\sum_{u \in \Tree} H(S_u | \BSS_{c(u)}, \BSX = \bsx).
\]
On the one hand, we have $C(\Tree) \leq M(\Tree)$. On the other hand, 
by Proposition \ref{prop:child_entropy}, $G(\Tree) \leq C(\Tree)$. To
assess the overestimation of state uncertainty induced by using the
profiles based on $H(S_u | \BSS_{c(u)}, \BSX = \bsx)$ or 
$H(S_u | \BSX = \bsx)$ instead of 
$H(S_u | S_{\rho(u)}, \BSX = \bsx)$, these quantities were
computed for each tree in the dataset, using the 6-state HMT model with
the ``length'' variable given in Section \ref{subsubsec:application-length}.
The ratio $(C(\Tree)-G(\Tree))/G(\Tree)$ and
$(M(\Tree)-G(\Tree))/G(\Tree)$ are given in Table 
\ref{tab:parent_children_profiles}.
\begin{table}[!htb]
\begin{center}
\begin{tabular}[]{ccc}
\hline
Tree $\Tree$ &
            ${\dps{\underline{C(\Tree)-G(\Tree)}}}$ & ${\underline{M(\Tree)-G(\Tree)}}$ 
            \\ % & $\sqrt{(Var(N_u))}$ \\
number & $G(\Tree)$ & $G(\Tree)$ \\ % & \\
%\hline
\hline
1 & 
10.1 \% &  69.1 \% \\ %& 1.6 \\

2 &
30.9 \% &  78.0 \%  \\ %& 1.5 \\ 

3 & 
22.4 \% &  76.4 \%  \\ %& 1.3 \\ 

4 & 
16.2 \% &  56.0 \%  \\ %& 1.5 \\ 

5 & 
6.5 \% &  85.2 \%  \\ %& 1.3 \\ 

6 &
19.1 \% &  73.5 \% \\ %& 1.2 \\ 

7 & 
26.6 \% &  85.1 \% \\ %& 2.3\\

\hline
\end{tabular}

\caption[Comparison between sums of entropies]
{\textit{Comparison between entropy conditioned on parent state, children
 states, and marginal entropy. $(C(\Tree)-G(\Tree))/G(\Tree)$ represents
 the relative distance between conditional entropy given the
 children states and conditional entropy given the parent
 state (taken as reference). $(M(\Tree)-G(\Tree))/G(\Tree)$ represents
 the relative distance between marginal entropy and conditional entropy
 given the parent state.}}
\label{tab:parent_children_profiles}
\end{center}
\end{table}

It can be seen from Table \ref{tab:parent_children_profiles} that $C(\Tree)$
is much closer from $G(\Tree)$ than $M(\Tree)$ is. As a consequence, profiles
based on $H(S_u | \BSS_{c(u)}, \BSX = \bsx)$ provide moderate
amplification of the perception of state uncertainty in our example. By
contrast, $M(\Tree)$ is a poor approximation of the global state tree
entropy. As a consequence, the smoothed probability profiles are
irrelevant to quantify uncertainty related to the state tree.

\section{Conclusion and discussion}
\label{sec:conclusion}

\subsection{Concluding remarks}
This work illustrates the relevance of using entropy profiles to
assess state uncertainty in graphical hidden Markov models. It has been
shown that global state entropy can be decomposed additively along
the graph structure. In the particular case of HMC and HMT models, we
provided algorithms to compute the local contribution of each
vertex to this entropy. 

Used jointly with the Viterbi algorithm and its
variants, these profiles allow deeper understanding on how the model
assigns states to vertices -- compared to plain Viterbi state
restoration and smoothed probability profiles. In particular, these
profiles may highlight zones of connected vertices where marginal state
uncertainty is not only related to the observed value at each vertex,
but where concurrent subtrees are plausible restorations in this
zone. Such situations are characterized by high mutual information
between neighboring states.

Equivalent algorithms remain to be derived for trees with conditional
dependency between children states given parent state (in particular,
for trees oriented from the leaf vertices toward the root), and in the
case of the DAG structures mentioned in Section \ref{subsec:graphical_hmm}.

\subsection{Connexion with model selection}
\paragraph{Selection of the number of states}
In the perspective of model selection, entropy computation can also appear
as a valuable tool. If irrelevant states are added to a graphical hidden
Markov model, global state entropy is expected to
increase. This principle can be extended to adding irrelevant variables
(that is, variables that are independent from the states or conditionally
independent from the states given other variables). If the model
parameters were known, adding such variables would not change the 
state conditional distribution. However, since the parameters are
estimated from a finite sample, estimation induces perturbations in this
conditional distribution in the context of irrelevant variables, and the
global state entropy tends to increase. 
This intuitive statement explains why several model selection criteria
based on state entropy were proposed. Among these is the Normalized
Entropy Criterion introduced by Celeux  \& Soromenho
(1996\nocite{celeux1996}) in independent mixture models. It is defined
for a mixture with $J$ components as \[
\textrm{NEC}(J) = \frac{
%H_{J,\hat{\theta}_J}
H(\BSS|\BSX=\bsx)}
{\log f_{\hat{\theta}_J}(\bsx) - \log f_{\hat{\theta}_1}(\bsx)}
\]
if $J>1$, and has to be minimized. Here, $\theta_J$ denotes the
parameters of a $J$-component mixture model, $f_{\theta_J}$ its probability
density function and $\hat{\theta}_j$ the maximum likelihood estimator of 
$\theta_J$. Note that $H(\BSS|\BSX=\bsx)$ also depends on
$f_{\hat{\theta}_j}$. The number of independent model parameters in
$\theta_J$ will be denoted by $d_J$. 
For $J=1$, NEC is defined as a ratio
between the entropy of a mixture model with different variances and
equal proportions and means, and the difference between the
log-likelihoods of this model and a model with one component. 

The ICL-BIC is also a criterion relying on global state entropy, and
must be maximized. It was introduced by McLachlan \& Peel
(2000\nocite{mclachlan2000}, chap. 6) and is defined by  
\[
{\mbox{ICL-BIC}}(J) = 2 \log f_{\hat{\theta}_J}(\bsx)
- 2 H(\BSS|\BSX=\bsx) - d_J \log(n)
\]
where $n$ is the number of vertices in $\BSX$.

Although both criteria were originally defined in the context of
independent mixtures, their generalization to graphical hidden Markov
models is rather straightforward. By favoring models with small state entropy
and high log-likelihood, they aim at selecting models such that the
uncertainty of the state values is low, whilst achieving good fit to
the data. In practice, they tend to select models with well-separated
components in the case of independent mixture models (McLachlan \& Peel,
2000\nocite{mclachlan2000}, chap. 6). 

\begin{table}[!htb]
\begin{center}
%\hspace{-0.8cm}
\begin{tabular}[]{c|cccc}
\hline
Criterion
	& \multicolumn{4}{c}{Number of states} \\
	  \cline{2-5}
	& \multicolumn{1}{c}4 & 5 & 6 & 7  \\
%\hline
\hline

BIC & -10,545 & -10,558 & {\bf -10,541} & -10,558 \\
NEC & 0.48 & 0.37 & {\bf 0.32} & 0.46 \\
ICL-BIC & -10,764 & -10,742 & {\bf -10,704} & -10,814 \\
\hline
\end{tabular}

\caption[Model selection criteria based on entropy]
{\textit{Value of three model selection criteria: BIC, NEC and ICL-BIC,
 to select the number of states in the Aleppo pines dataset.}}
\label{tab:selection_criteria}
\end{center}
\end{table}

A similar criterion based on minimization of a contrast combining the
loglikelihood and state entropy in the context of independent mixture
models
was proposed by Baudry {\it et al.} (2008\nocite{baudry2008}). Selection of
the number of mixture components was achieved by a slope heuristic.

Applied to the Allepo pines dataset in Section \ref{sec:application},
BIC would assess the 4-state and the 6-state HMT models as nearly equally
suited to the dataset, and in practice the modeller could prefer the
more parsimonious 4-state model (see Table
\ref{tab:selection_criteria}). In contrast, NEC and ICL-BIC would 
select the 6-state HMT model, since it achieves a better state
separation than the 4-state model, for equivalent fit (as assessed by BIC).

Let us note however that the BIC, NEC and ICL-BIC criteria are not
suitable for variable selection, since the log-likelihoods of models
with different number of variables cannot be compared.

\paragraph{Selection of variables}
To decide which variables are relevant for the identification 
of hidden Markov chain or tree models with interpretable states, global
state entropy can be regarded as a diagnostic tool. Adding irrelevant
variables in the model 
expectedly leads to increasing the state entropy; consequently, if
adding a variable results into a reduction of the state entropy, 
this variable can be considered as relevant. Moreover, the state space 
does not depend on the number of observed variables. This makes the
values of $H(\BSS | \BSX = \bsx)$  and $H(\BSS | \BSY = \bsy)$
comparable, even if the observed processes $\BSX$ and $\BSY$ differ by
their numbers of variables. 

To illustrate this principle, the following experiment was conducted: 
ten samples of size 836 (same size as the dataset in the application
of section \ref{sec:application}) were simulated independently, using 
a Bernoulli distribution with parameter $p=0.5$. They where also
simulated independently from the five other variables described in the
application, and were successively added to the Aleppo pines dataset.

After the addition of the $i$\textsuperscript{th} Bernoulli variable
$Y_i = (Y_{i,u})_{1 \leq u \leq 836}$, a 6-state HMT model was estimated
on the $i+5$-dimensional dataset, and the total state entropy $H_i$ was
computed. This procedure was repeated ten times ({\it{i.e.}}, samples
$(Y_{i,j,u})_{1 \leq u \leq 836}$ were simulated for additional
variables $i=1,\ldots,10$ and for replications $j=1,\ldots,10$). 
Thus, $10 \times 10$ values of $H_{i,j}$ were computed. For a given
value of $i$, the observed variable was a $i+5$-dimensional vector.
For $1 \leq j \leq 10$, let $H_{0, j} = H_0$ be the state entropy
yielded by the 6-state model in Section \ref{subsubsec:application-length}
using the original dataset. Its value does not depend on $j$.
Only three values in $(H_{i,j})_{1 \leq i \leq 10, 1 \leq j \leq 10}$
were below $H_{0, j}$. To assess the increase in state entropy related
to the inclusion of irrelevant variables, the following regression model
was considered:
\[
 H_{i, j} = \alpha i + \beta + \varepsilon_{i,j}
\]
where the residuals $(\varepsilon_{i,j})_{i,j}$ were assumed independent
and Gaussian with mean 0 and variance $\sigma^2$. The test of the null 
hypothesis $\MH_0:\alpha=0$ against the alternative 
$\MH_1:\alpha \in \RR$ had {\it{P-value}} $10^{-3}$. The maximum likelihood
estimate of $\alpha$ was ${\hat \alpha} = 3.4$. This result highlights
that state entropy significantly increased with the number of additional
variables. 

\begin{figure}[!htb]
\begin{center}
\hspace{-2cm}
\includegraphics[width=6cm]{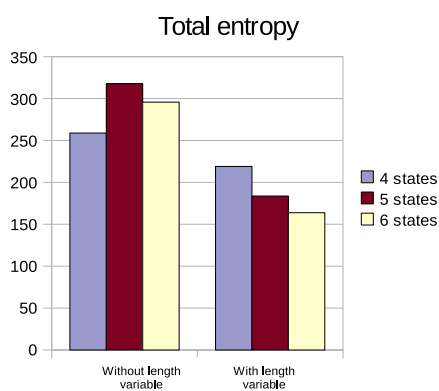}
\end{center}
\caption[Total entropy for the different models]
{\textit{Global state entropy of the whole forest of state trees, for
 models with 4 to 6 states, including or not the length variable.}}
\label{fig:total_entropy}
\end{figure}

It can be seen from Figure \ref{fig:total_entropy} that global state
entropy (computed on the whole forest of state trees) was lowest for 
the 6-state HMT model including the ``length'' variable.
Combined with Table \ref{tab:selection_criteria}, this figure confirms
that this HMT model is the most relevant for the Allepo pine dataset, since
the information criteria BIC, NEC and ICL-BIC selected 6-state HMT models, and
since removing the ``length'' variable from this model increased state
entropy. This 6-state HMT model also has the most relevant
interpretation, as illustrated in Section \ref{sec:application}.

This highlights the potential benefit of using entropy-based criteria in
model selection for hidden Markov models.

\newpage

\appendix
% \section{Appendix A.}
%\label{app:theorem}

\section{Proof of propositions}
\label{app:proof}
%\addcontentsline{toc}{section}{Appendix~\ref{app:proof}: Proof of propositions}

\subsection[Entropy profiles conditioned on the future]{Algorithms for computing entropy profiles conditioned on the future
in the case of hidden Markov chain models}{Algo}
\label{future entropy profiles}

This algorithm to compute 
$H(S_{t+1}^{T-1}|S_{t}=j,X_{t+1}^{T-1}=x_{t+1}^{T-1})$ 
for $j=0,\ldots,J-1$ and $t=0,\ldots,T-1$ 
is initialized at $t=T-1$ and for $j=0,\ldots,J-1$ as follows:
\begin{align*}
& 
H\left(  S_{T-1}|S_{T-2}=j,X_{T-1}=x_{T-1}\right) \\
&  =-\sum\limits_k P\left(  S_{T-1}=k|S_{T-2}=j,X_{T-1}=x_{T-1}\right)  \log P\left(
S_{T-1}=k|S_{T-2}=j,X_{T-1}=x_{T-1}\right).
\end{align*}
The backward recursion is achieved, for $t=T-2,\ldots,0$ and for
$j=0,\ldots,J-1$, using:
\begin{align}
&  H\left(  S_{t+1}^{T-1}|S_{t}=j,X_{t+1}^{T-1}=x_{t+1}^{T-1}\right)
 \nonumber \\
& = - \sum\limits_{s_{t+1},\ldots,s_{T-1}} 
P\left(
 S_{t+1}^{T-1}=s_{t+1}^{T-1}|S_{t}=j,X_{t+1}^{T-1}=x_{t+1}^{T-1}\right)
\log P\left(
 S_{t+1}^{T-1}=s_{t+1}^{T-1}|S_{t}=j,X_{t+1}^{T-1}=x_{t+1}^{T-1}\right)
 \nonumber  \\
& = - \sum\limits_{s_{t+2},\ldots,s_{T-1}} \sum\limits_k
P\left(S_{t+2}^{T-1}=s_{t+2}^{T-1}|S_{t+1}=k,S_{t}=j,X_{t+1}^{T-1}=x_{t+1}^{T-1}\right)
 P\left(S_{t+1}=k | S_{t}=j,X_{t+1}^{T-1}=x_{t+1}^{T-1}\right)
 \nonumber \\
& \times \left\{
\log
 P\left(S_{t+2}^{T-1}=s_{2+1}^{T-1}|S_{t+1}=k,S_{t}=j,X_{t+1}^{T-1}=x_{t+1}^{T-1}\right) +
\log P\left(S_{t+1}=k |
 S_{t}=j,X_{t+1}^{T-1}=x_{t+1}^{T-1}\right)\right\}  \nonumber \\
& = - \sum\limits_k  P\left(S_{t+1}=k |
 S_{t}=j,X_{t+1}^{T-1}=x_{t+1}^{T-1}\right) 
\sum\limits_{s_{t+2},\ldots,s_{T-1}} 
P\left(S_{t+2}^{T-1}=s_{t+2}^{T-1}|S_{t+1}=k,X_{t+2}^{T-1}=x_{t+2}^{T-1}\right)
 \nonumber \\
& \times \left\{
\log
 P\left(S_{t+2}^{T-1}=s_{2+1}^{T-1}|S_{t+1}=k,X_{t+2}^{T-1}=x_{t+2}^{T-1}\right) +
\log P\left(S_{t+1}=k |S_{t}=j,
 X_{t+1}^{T-1}=x_{t+1}^{T-1}\right)\right\}  
\nonumber \\
&  =\sum\limits_k P\left(  S_{t+1}=k|S_{t}=j,X_{t+1}^{T-1}=x_{t+1}^{T-1}\right)  \left\{
H\left(  S_{t+2}^{T-1}|S_{t+1}=k,X_{t+2}^{T-1}=x_{t+2}^{T-1}\right)
 \right.
 \nonumber \\
& \qquad \left.  -\log P\left(  S_{t+1}=k|S_{t}=j,X_{t+1}^{T-1}=x_{t+1}%
^{T-1}\right)  \right\},   \label{entropy_backward_recursion}%
\end{align}
with
\begin{align*}
&  P\left(  S_{t+1}=k|S_{t}=j,X_{t+1}^{T-1}=x_{t+1}^{T-1}\right) \\
&  =\frac{P\left(  X_{t+1}^{T-1}=x_{t+1}^{T-1},S_{t+1}=k|S_{t}=j\right)
}{P\left(  X_{t+1}^{T-1}=x_{t+1}^{T-1}|S_{t}=j\right)  }\\
&  =\frac{L_{t+1}\left(  k\right)  p_{jk}/G_{t+1}\left(  k\right)  }{%
\sum_m
L_{t+1}\left(  m\right)  p_{jm}/G_{t+1}\left(  m\right)  }.
\end{align*}
\noindent Using a similar argument as in
\eqref{entropy_backward_recursion}, the termination step is given by
\begin{align*}
&  H\left(  S_{0}^{T-1}|\BSX=\bsx\right) \\
%& = -\sum\limits_{s_0,\ldots,s_{T-1}} 
%P\left(S_{0}^{T-1}=s_{0}^{T-1}|\BSX=\bsx\right)
%\log P\left(S_{0}^{T-1}=s_{0}^{T-1}|\BSX=\bsx\right) \\
%& = -\sum\limits_{s_1,\ldots,s_{T-1}} \sum\limits_j
%P\left(S_{1}^{T-1}=s_{1}^{T-1}|S_0=j, \BSX=\bsx\right)
%P\left(S_0=j | \BSX=\bsx\right) \\
%& \times \left\{\log P\left(S_{1}^{T-1}=s_{1}^{T-1}|S_0=j,
% \BSX=\bsx\right) + \log 
%P\left(S_0=j | \BSX=\bsx\right)\right\} \\
& = - \sum\limits_j P\left(S_0=j | \BSX=\bsx\right)
\left\{ \sum\limits_{s_1,\ldots,s_{T-1}} 
P\left(S_{1}^{T-1}=s_{1}^{T-1}|S_0=j, X_{1}^{T-1}=x_{1}^{T-1}\right)
\right. \\
& \left. 
\vphantom{\sum\limits_{s_1,\ldots,s_{T-1}} 
P}
\times \log P\left(S_{1}^{T-1}=s_{1}^{T-1}|S_0=j,
 X_{1}^{T-1}=x_{1}^{T-1}\right) + \log 
P\left(S_0=j | \BSX=\bsx\right)\right\} \\
& = \sum\limits_j   L_0(j) \left\{
H\left(S_{1}^{T-1}=s_{1}^{T-1}|S_0=j, X_{1}^{T-1}=x_{1}^{T-1}\right)
- \log  L_0(j) \right\}.
\end{align*}

\subsection[Entropy profiles conditioned on the children states]{Entropy profiles conditioned on the children states for
  hidden Markov tree models}
\label{app:hmt_children_profiles}
A proof of Proposition \ref{prop:child_entropy} is given, in the case of
binary trees for the sake of simplicity. 
\begin{proof}
Let $lc\left(  u\right)  $ and $rc\left(  u\right)  $ denote the two
children of vertex $u$. Applying the chain rule on the children of the
root vertex, we can write
\begin{align*}
H\left(  \BSS|\BSX=\bsx\right)   &
=H\left(  S_{0}|\BBSS_{c\left(  0\right)  },\BSX=\bsx\right)  
+H\left(  S_{lc\left(  0\right)  }|\BBSS_{c\left(  lc\left(  0\right)
 \right)  },\BBSS_{rc\left(  0\right) 
},\BSX=\bsx\right)  \\
& \quad +  H\left(  S_{rc\left(  0\right)  }|\BBSS_{c\left(  lc\left(
0\right)  \right)  },\BBSS_{c\left(  rc\left(  0\right)  \right)
},\BSX=\bsx\right)  +H\left(  \BBSS_{c\left(
lc\left(  0\right)  \right)  },\BBSS_{c\left(  rc\left(  0\right)
\right)  }|\BSX=\bsx\right)  .
\end{align*}

This decomposition is indeed not unique and we can choose to extract the
conditional entropy corresponding to $rc\left(  0\right)  $ before the
conditional entropy corresponding to $lc\left(  0\right)  $. Applying
the property that deconditioning augments entropy (Cover \& Thomas,
 2006\nocite{cover2006}, chap. 2)
\begin{align*}
H\left(  S_{lc\left(  0\right)  }|\BBSS_{c\left(  lc\left(  0\right)
\right)  },\BBSS_{rc\left(  0\right)  },\BSX=\bsx\right)   &  \leq H\left(  S_{lc\left(  0\right)  }|
\BBSS_{c\left(  lc\left(  0\right)  \right)  },\BSX=\bsx\right)  ,\\
H\left(  S_{rc\left(  0\right)  }|\BBSS_{c\left(  lc\left(  0\right)
\right)  },\BBSS_{c\left(  rc\left(  0\right)  \right)  },\BSX=\bsx \right)   &  \leq H\left(  S_{rc\left(  0\right)
}|\BBSS_{c\left(  rc\left(  0\right)  \right)  },\BSX
=\bsx\right)  ,
\end{align*}
we obtain
\begin{align*}
H\left(  \BSS|\BSX=\bsx\right)   &  \leq
H\left(  S_{0}|\BBSS_{c\left(  0\right)  },\BSX=\bsx\right)  
+H\left(  S_{lc\left(  0\right)  }|\BBSS_{c\left(  lc\left(  0\right)
 \right)  },\BSX=\bsx\right)  \\
& \quad + H\left(  S_{rc\left(  0\right)  }|\BBSS_{c\left(  rc\left(
0\right)  \right)  },\BSX=\bsx\right)  +H\left(
\BBSS_{c\left(  lc\left(  0\right)  \right)  },\BBSS_{c\left(
rc\left(  0\right)  \right)  }|\BSX=\bsx\right)  .
\end{align*}
Applying the same decomposition recursively from the root to
the leaves and upper bounding on each internal vertex completes the
proof by induction.
\end{proof}

% Note: in this sample, the section number is hard-coded in. Following
% proper LaTeX conventions, it should properly be coded as a reference:

%In this appendix we prove the following theorem from
%Section~\ref{sec:textree-generalization}:

\vskip 0.2in
\bibliographystyle{plainnat}
\bibliography{bibliography}

\tableofcontents

\end{document}